\DeclareMathOperator{\comm}{comm}
\DeclareMathOperator{\free}{free}
\DeclareMathOperator{\even}{even}
\DeclareMathOperator{\aut}{aut}
\DeclareMathOperator{\ev}{ev}
\DeclareMathOperator{\id}{id}
\definecolor{lgray}{gray}{0.7}     
\renewcommand{\phi}{\varphi}
\newsavebox{\boxpaarpart}
\newsavebox{\boxbaarpart}
\newsavebox{\boxpaarbaarpart}
\newsavebox{\boxpaarbaareps}
\newsavebox{\boxcrosspart}
\newsavebox{\boxidpart}
\newsavebox{\boxididpart}
\newsavebox{\boxvierpartrot}
\newsavebox{\boxdreipartrot}
\newsavebox{\boxCross}
\newsavebox{\boxWideCross}
\newsavebox{\boxId}
\newsavebox{\boxBlackId}
\newsavebox{\boxPaar}
\newsavebox{\boxMixCrossId}
\newsavebox{\boxMixCrossPaar}
\newsavebox{\boxMixIdPaar}
\newcommand{\paarpart}{\usebox{\boxpaarpart}}
\newcommand{\baarpart}{\usebox{\boxbaarpart}}
\newcommand{\paarbaarpart}{\usebox{\boxpaarbaarpart}}
\newcommand{\crosspart}{\usebox{\boxcrosspart}}
\newcommand{\ididpart}{\usebox{\boxididpart}}
\newcommand{\vierpartrot}{\usebox{\boxvierpartrot}}
\newcommand{\dreipartrot}{\usebox{\boxdreipartrot}}
\newcommand{\MixCrossId}{\usebox{\boxMixCrossId}}
\newcommand{\MixCrossPaar}{\usebox{\boxMixCrossPaar}}
\newcommand{\MixIdPaar}{\usebox{\boxMixIdPaar}}
\newcounter{PartitionDepth}
\newcounter{PartitionLength}
\newcommand{\parti}[2]{
 \begin{picture}(#2,#1)
 \setcounter{PartitionDepth}{-1-#1}
 \put(#2,\thePartitionDepth){\line(0,1){#1}}
 \end{picture}}
\newcommand{\partii}[3]{
 \begin{picture}(#3,#1)
 \setcounter{PartitionLength}{#3-#2}
 \setcounter{PartitionDepth}{-1-#1}
 \put(#2,\thePartitionDepth){\line(0,1){#1}}     
 \put(#3,\thePartitionDepth){\line(0,1){#1}}
 \put(#2,\thePartitionDepth){\line(1,0){\thePartitionLength}}
 \end{picture}}
\newcommand{\partiii}[4]{
 \begin{picture}(#4,#1)
 \setcounter{PartitionLength}{#4-#2}
 \setcounter{PartitionDepth}{-1-#1}
 \put(#2,\thePartitionDepth){\line(0,1){#1}}
 \put(#3,\thePartitionDepth){\line(0,1){#1}}
 \put(#4,\thePartitionDepth){\line(0,1){#1}}
 \put(#2,\thePartitionDepth){\line(1,0){\thePartitionLength}} 
 \end{picture}}
\newcommand{\upparti}[2]{
 \begin{picture}(#2,#1)
 \setcounter{PartitionDepth}{#1}
 \put(#2,0){\line(0,1){#1}}
 \end{picture}}
\newcommand{\uppartii}[3]{
 \begin{picture}(#3,#1)
 \setcounter{PartitionLength}{#3-#2}
 \setcounter{PartitionDepth}{#1}
 \put(#2,0){\line(0,1){#1}}     
 \put(#3,0){\line(0,1){#1}}
 \put(#2,\thePartitionDepth){\line(1,0){\thePartitionLength}}
 \end{picture}}
\newcommand{\uppartiii}[4]{
 \begin{picture}(#4,#1)
 \setcounter{PartitionLength}{#4-#2}
 \setcounter{PartitionDepth}{#1}
 \put(#2,0){\line(0,1){#1}}
 \put(#3,0){\line(0,1){#1}}
 \put(#4,0){\line(0,1){#1}}
 \put(#2,\thePartitionDepth){\line(1,0){\thePartitionLength}} 
 \end{picture}}
\begin{document}

\newtheorem{theorem}{Theorem}[section]
\newtheorem{lemma}[theorem]{Lemma}
\newtheorem{proposition}[theorem]{Proposition}
\newtheorem{corollary}[theorem]{Corollary}

\theoremstyle{definition}
\newtheorem{definition}[theorem]{Definition}
\newtheorem{example}[theorem]{Example}
\newtheorem{xca}[theorem]{Exercise}
\newtheorem{notation}[theorem]{Notation}

\theoremstyle{remark}
\newtheorem{remark}[theorem]{Remark}

%\numberwithin{equation}{section}

\newcommand{\NN}{ {\mathbb N} }
\newcommand{\ZZ}{ {\mathbb Z} }
\newcommand{\RR}{ {\mathbb R} }
\newcommand{\CC}{{\mathbb C}}
\newcommand{\cD}{{\mathcal D}}
\newcommand{\cP}{ {\mathcal P} }
\newcommand{\cPP}{{\mathcal{PS}}}
\newcommand{\cF}{{\mathcal F}}
\newcommand{\HH}{{\mathcal H}}
\newcommand{\GG}{{\mathcal G}}
\newcommand{\cV}{{\mathcal V}}
\newcommand{\cW}{{\mathcal W}}
\newcommand{\cM}{{\mathcal M}}
\newcommand{\KK}{{\kappa}}
\newcommand{\cU}{{\mathcal U}}
%old:
\newcommand{\UU}{\mathcal{U}}

\newcommand{\cX}{{\mathcal X}}
\newcommand{\dd}{\mathbf{d}}
\newcommand{\cI}{{T}}
\newcommand{\ff}{\varphi}
\newcommand{\la}{\langle}
\newcommand{\ra}{\rangle}
\newcommand{\tr}{\mathrm{tr}}
\newcommand{\Tr}{\mathrm{Tr}}
\newcommand{\EE}{\mathrm{E}}
\newcommand{\ct}{{\bf t}}
\newcommand{\cB}{\mathcal{B}}
\newcommand{\cA}{\mathcal{A}}
\newcommand{\kt}{\kk^\ct}
\newcommand{\odo}{\otimes\cdots\otimes}
\newcommand{\cyc}{\mathbf{c}\,}
\newcommand{\SNC}{S_{NC}}
\newcommand{\SNCc}{\overline{\SNC}}
\newcommand{\SNCe}{S^{(\epsilon)}_{NC}}
\newcommand{\NC}{NC}
\newcommand{\cyclic}{{\text{\rm cyc}}}
\newcommand{\lin}{{\text{\rm lin}}}
\newcommand{\Wg}{\mathrm{Wg}}
\newcommand{\fluct}{\zeta}
\newcommand{\Nb}{{(N)}}
\newcommand{\cS}{\mathcal{S}}
\newcommand{\ii}{\text{\bf i}}
\newcommand{\jj}{\text{\bf j}}
\newcommand{\kkk}{\text{\bf k}}
\newcommand{\rr}{\text{\bf r}}

\hyphenation{Voi-cu-les-cu}

%%%% new commands

\newcommand{\cc}{\kappa}
\newcommand{\cG}{\mathcal{G}}
\newcommand{\cH}{\mathcal{H}}
\newcommand{\cR}{\mathcal{R}}
\newcommand{\cC}{\mathcal{C}}
\newcommand{\cov}{\mathrm{cov}}
\newcommand{\kk}{\kappa}
\newcommand{\moeb}{\mathrm{\text{M\"ob}}}
\newcommand{\cWg}{C}
\newcommand{\tovert}{\to}
\newcommand{\IZ}{\mathrm{IZ}}
\newcommand{\bF}{\mathbb{F}}
\newcommand{\Distrhigher}{D_{higher}}
\newcommand{\ab}{\allowbreak}
\newcommand{\ol}{\overline}
\newcommand{\ie}{\textit{i.e.}\,}
\newcommand{\ee}{\varepsilon}
\newcommand{\HHc}{\HH^{\circ}}

\newcommand{\ky}{\kappa}

\newcommand{\GUE}{\text{GUE}}

\title{Quantum groups with partial commutation relations}
\author{Roland Speicher}
\author{Moritz Weber}
\address{Saarland University, Fachbereich Mathematik, Postfach 151150, 66041 Saarbr\"ucken, Germany}
\email{speicher@math.uni-sb.de, weber@math.uni-sb.de}
\date{\today}
%\subjclass[2010]{XXXXX (Primary); XXXXX (Secondary)}
\keywords{noncommutative sphere, partial commutation relation, compact quantum group, orthogonal group, quantum automorphism group, de Finetti theorem, Coxeter groups}
\thanks{Both authors were partially funded by the ERC Advanced Grant on Non-Commutative Distributions in Free Probability, held by Roland Speicher.}

\begin{abstract}
We define new noncommutative spheres with partial commutation relations for the coordinates. We investigate the quantum groups acting maximally on them, which yields new quantum versions of the orthogonal group: They are partially commutative in a way such that they do \emph{not} interpolate between the classical and the free quantum versions of the orthogonal group. Likewise we define non-interpolating, partially commutative quantum versions of the symmetric group recovering Bichon's quantum automorphism groups of graphs. They fit with the mixture of classical and free independence as recently defined by Speicher and Wysoczanski (rediscovering $\Lambda$-freeness of Mlotkowski), due to some weakened version of a de Finetti theorem. 
\end{abstract}

\maketitle

\section{Introduction}

Motivated by the recent preprint on mixtures of classical and free indepence by Wysoczanski and the first author \cite{SpW} -- where the notion of and results on $\Lambda$-freeness of Mlotkowski \cite{Mlo} were rediscovered -- we ask for the corresponding quantum symmetries. The mixture of independences goes as follows. Let $\ee=(\ee_{ij})_{i,j\in \{1,\ldots,n\}}$ be a symmetric matrix with $\ee_{ij}\in\{0,1\}$ and $\ee_{ii}=0$. If variables $x_1,\ldots,x_n$ are $\ee$-independent, then:
\begin{itemize}
\item $x_i$ and $x_j$ are free in the case $\ee_{ij}=0$
\item and $x_i$ and $x_j$ are independent in the case $\ee_{ij}=1$ (in particular $x_ix_j=x_jx_i$ in this situation).
\end{itemize}
If all entries of $\ee$ are zero ($\ee=\ee_{\free}$), we obtain Voiculescu's free independence; if all non-diagonal entries of $\ee$ are one ($\ee=\ee_{\comm}$), we obtain classical independence. 

It is well-known that independences can be characterized via distributional symmetries with the help of de Finetti type theorems. For instance, classical independence is equivalent to invariance under actions of the symmetric group $S_n$, whereas free independence is characterized \cite{SpK} by invariance under actions of Wang's \cite{WangSym} quantum symmetric group $S_n^+$. Our aim is to find the right quantum groups corresponding to the above $\ee$-independence.

For doing so, we first define noncommutative $\ee$-spheres $S_{\RR,\ee}^{n-1}$ by:
\[C(S_{\RR,\ee}^{n-1}):=C^*(x_1,\ldots,x_n\;|\; x_i=x_i^*, \sum_i x_i^2=1, x_ix_j=x_jx_i, \textnormal{ if } \ee_{ij}=1)\]
For $\ee=\ee_{\comm}$ the above $C^*$-algebra is commutative and it is nothing but the algebra of continuous functions over the real sphere $S_{\RR}^{n-1}\subset\RR^n$. For $\ee=\ee_{\free}$ we obtain Banica and Goswami's \cite{BanGos} free version $S^{n-1}_{\RR,+}$ of the sphere. In both cases we know the (quantum) group acting maximally on the sphere. In the first case, this is the orthogonal group $O_n\subset M_n(\RR)$ whereas in the second case \cite{BanGos}, it is Wang's \cite{WangOrth} free orthogonal quantum group $O_n^+$. We  define an $\ee$-version $O_n^\ee$ of these objects by
\[C(O_n^\ee)=C^*(u_{ij}, i,j=1,\ldots,n\;|\; u_{ij}=u_{ij}^*, u \textnormal{ is orthogonal}, R^{\ee}\textnormal{ hold}),\]
where $R^\ee$ are the relations:
\[u_{ik}u_{jl}=\begin{cases}
u_{jl}u_{ik}&\text{if  $\ee_{ij}=1$ and $\ee_{kl}=1$}\\
u_{jk}u_{il}&\text{if  $\ee_{ij}=1$ and $\ee_{kl}=0$}\\
u_{il}u_{jk}&\text{if  $\ee_{ij}=0$ and $\ee_{kl}=1$}
\end{cases}\]
Moreover, we define an $\ee$-version $S_n^\ee\subset O_n^\ee$ of the symmetric group by
\[C(S_n^\ee)= C^*(u_{ij}\;|\; u_{ij}=u_{ij}^*=u_{ij}^2,\sum_ku_{ik}=\sum_ku_{kj}=1\;\forall i,j, \mathring R^\ee \textnormal{ hold}),\]
where $\mathring R^\ee$ are the relations:
\[u_{ik}u_{jl}=\begin{cases}
u_{jl}u_{ik}&\text{if  $\ee_{ij}=1$ and $\ee_{kl}=1$}\\
0&\text{if  $\ee_{ij}=1$ and $\ee_{kl}=0$}\\
0&\text{if  $\ee_{ij}=0$ and $\ee_{kl}=1$}
\end{cases}\]
Note that the quotient of $C(S_n^+)$ by $R^\ee$ coincides with the one by $\mathring R^\ee$ (Lemma \ref{LemREpsAeqRRing}). Again, the choice  $\ee=\ee_{\comm}$ yields $O_n^\ee=O_n$ and $S_n^\ee=S_n$, whereas $\ee=\ee_{\free}$ yields $O_n^\ee=O_n^+$ and $S_n^\ee=S_n^+$. The quantum groups $S_n^\ee$ coincide with Bichon's quantum automorphism groups of graphs \cite{BicQAG} and they are quantum subgroups of Banica's quantum automorphisms of graphs \cite{BanQAG}. The latter one are given by quotients of $C(S_n^+)$ by the relations $u\ee=\ee u$.

On the level of groups (or monoids) such mixed commutation relations have been studied extensively under names such as ``right angled Artin groups'', ``free partially commutative groups'', ``trace groups'', ``graph groups'', ``Cartier-Foata monoids'', ``trace monoids'' etc, see for instance \cite{Cha,Foata} or the references in \cite{SpW}. It is also linked to the following Coxeter groups (see Def \ref{DefCox}):
\[\langle a_1,\ldots,a_n\;|\; (a_ia_j)^{m_{ij}}=e\rangle,\qquad 
m_{ij}=\begin{cases} 1 &\textnormal{ if } i=j\\2 &\textnormal{ if } \ee_{ij}=1\\\infty &\textnormal{ if }\ee_{ij}=0\end{cases}\]

\section{Main results}

Our main result about the $\ee$-sphere $S_{\RR,\ee}^{n-1}$ (or rather about its associated $C^*$-algebra $C(S_{\RR,\ee}^{n-1})$) is the following.

\begin{theorem}[Thm. \ref{VglSphaeren}]
The $\ee$-spheres are noncommutative as soon as $\ee\neq\ee_{\comm}$ and we have $S_{\RR,\ee}^{n-1}\neq S_{\RR,\ee'}^{n-1}$ for $\ee\neq\ee'$. In particular,
\[S_{\RR}^{n-1}\subsetneq S_{\RR,\ee}^{n-1}\subsetneq S_{\RR,+}^{n-1}\]
for $\ee\neq\ee_{\comm}$ and $\ee\neq\ee_{\free}$.
\end{theorem}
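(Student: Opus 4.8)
The plan is to reduce all three assertions to a single observation: for $i\neq j$ the coordinates $x_i$ and $x_j$ commute in $C(S_{\RR,\ee}^{n-1})$ \emph{if and only if} $\ee_{ij}=1$. One direction is immediate from the defining relations; the content is the converse, that whenever $\ee_{ij}=0$ the coordinates genuinely fail to commute. Granting this, the symmetric matrix $\ee$ becomes an invariant of the pair (algebra, coordinates), and everything else is bookkeeping with the canonical quotient maps. I would first record the poset structure: if $\ee\le\ee'$ entrywise then $\ee'$ imposes more commutation relations, so $x_i\mapsto x_i$ defines a surjection $C(S_{\RR,\ee}^{n-1})\twoheadrightarrow C(S_{\RR,\ee'}^{n-1})$. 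Since $\ee_{\free}\le\ee\le\ee_{\comm}$ for every admissible $\ee$, this already yields the (non-strict) chain $S_{\RR}^{n-1}\subseteq S_{\RR,\ee}^{n-1}\subseteq S_{\RR,+}^{n-1}$, with $C(S_{\RR,\ee}^{n-1})$ a quotient of the free sphere surjecting onto the classical one. (The universal $C^*$-algebras exist because $\sum_i x_i^2=1$ with $x_i=x_i^*$ forces $\|x_i\|\le 1$.)

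For the key claim I would exhibit, for each fixed pair $i\neq j$ with $\ee_{ij}=0$, an explicit finite-dimensional representation of $C(S_{\RR,\ee}^{n-1})$ in which the images of $x_i$ and $x_j$ do not commute. Concretely, on $\CC^2$ set
\[
x_i\longmapsto \frac{1}{\sqrt 2}\begin{pmatrix}0&1\\1&0\end{pmatrix},\qquad
x_j\longmapsto \frac{1}{\sqrt 2}\begin{pmatrix}0&-\mathrm{i}\\ \mathrm{i}&0\end{pmatrix},\qquad
x_k\longmapsto 0\quad(k\neq i,j).
\]
These matrices are self-adjoint, their squares are each $\tfrac12 I$ so that $\sum_k x_k^2\mapsto I$, and every commutation relation of the $\ee$-sphere is respected: any relation involving some $x_k$ with $k\notin\{i,j\}$ holds because the corresponding matrix is $0$, while the only relation that could constrain the pair $(i,j)$ is absent precisely because $\ee_{ij}=0$. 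By the universal property this extends to a $*$-homomorphism, and since the commutator of the two displayed matrices equals $\mathrm{i}\begin{pmatrix}1&0\\0&-1\end{pmatrix}\neq 0$, the element $x_ix_j-x_jx_i$ has nonzero image and is therefore nonzero in $C(S_{\RR,\ee}^{n-1})$. This establishes the key claim.

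From here the three statements follow. First, if $\ee\neq\ee_{\comm}$ there is a pair with $\ee_{ij}=0$, so $x_ix_j\neq x_jx_i$ and $C(S_{\RR,\ee}^{n-1})$ is noncommutative. Second, both $C(S_{\RR,\ee}^{n-1})$ and $C(S_{\RR,\ee'}^{n-1})$ are quotients of $C(S_{\RR,+}^{n-1})$ whose kernels are the closed two-sided ideals generated by $\{x_ix_j-x_jx_i:\ee_{ij}=1\}$ and $\{x_ix_j-x_jx_i:\ee'_{ij}=1\}$ respectively; if $\ee\neq\ee'$, say $\ee_{ij}=1\neq 0=\ee'_{ij}$, then $x_ix_j-x_jx_i$ lies in the first kernel but, by the key claim applied to $\ee'$, not in the second, so the kernels differ and $S_{\RR,\ee}^{n-1}\neq S_{\RR,\ee'}^{n-1}$. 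Finally, the same comparison gives strictness of the displayed chain: for $\ee\neq\ee_{\comm}$ some commutator is nonzero in $C(S_{\RR,\ee}^{n-1})$ yet vanishes in $C(S_{\RR}^{n-1})$, whence $S_{\RR}^{n-1}\subsetneq S_{\RR,\ee}^{n-1}$; and for $\ee\neq\ee_{\free}$ some commutator vanishes in $C(S_{\RR,\ee}^{n-1})$ but, by the key claim applied with $\ee_{\free}$, is nonzero in $C(S_{\RR,+}^{n-1})$, whence $S_{\RR,\ee}^{n-1}\subsetneq S_{\RR,+}^{n-1}$.

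The only real obstacle is the key claim, and even there the work is merely to confirm that the small representation respects \emph{all} of the $\ee$-relations; once the non-commuting Pauli pair is placed in the $i,j$ slots and everything else is set to zero, the failure of commutation is visible and the remainder is formal. A point I would state carefully is that ``$S_{\RR,\ee}^{n-1}\neq S_{\RR,\ee'}^{n-1}$'' is meant with respect to the canonical coordinates $x_i$, since it is exactly the commutation pattern of those coordinates that recovers the matrix $\ee$.
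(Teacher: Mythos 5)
Your proof is correct, and it reaches the same key reduction as the paper -- namely that for $i\neq j$ the generators $x_i,x_j$ commute in $C(S_{\RR,\ee}^{n-1})$ precisely when $\ee_{ij}=1$, after which everything is bookkeeping with the canonical surjections -- but you establish that reduction by a genuinely different construction. The paper first factors through the Coxeter group via $\phi_\ee:C(S_{\RR,\ee}^{n-1})\to C^*(\ZZ_2^\ee)$, $x_i\mapsto \tfrac{1}{\sqrt n}z_i$, and then represents $C^*(\ZZ_2^\ee)$ on $\bigoplus_{i<j}H_{ij}$ with a non-commuting pair of involutions placed in the $(i,j)$ summand exactly when $\ee_{ij}=0$ (Lemma \ref{RepSphere}); this yields a single representation witnessing all non-commutations simultaneously. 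You instead build, for each offending pair, a two-dimensional representation of the sphere algebra directly, putting scaled Pauli matrices in the $i,j$ slots and sending every other coordinate to $0$. Your shortcut is legitimate here because the sphere relation $\sum_k x_k^2=1$ tolerates vanishing coordinates, and it makes the verification of the relations essentially trivial; note, however, that it would not transfer to the settings where the paper reuses its Coxeter-group representation (the diagonal subgroup $\ZZ_2^\ee$ of $O_n^\ee$ in Lemma \ref{RepOnEps} and Proposition \ref{OnEpsNoncommutative}), since there the generators are unitaries and cannot be set to zero -- which is precisely what the normalization $\tfrac{1}{\sqrt n}$ and the direct-sum construction in the paper are designed to handle. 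Your argument for distinguishing $\ee\neq\ee'$ via the kernels of the quotient maps from $C(S_{\RR,+}^{n-1})$, and for the strictness of the chain, matches the paper's logic exactly.
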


Here, $S_{\RR,\ee}^{n-1}\neq S_{\RR,\ee'}^{n-1}$ means that there is no isomorphism from $C(S_{\RR,\ee}^{n-1})$ to $C(S_{\RR,\ee'}^{n-1})$ mapping $x_i\mapsto x_i$.
The results on $O_n^\ee$ may be summarized as follows.

\begin{theorem}[Prop. \ref{OnEpsNoncommutative}, Thm. \ref{Action}]
The quantum groups $O_n^\ee$ are no groups as soon as $\ee\neq\ee_{\comm}$ and we have $O_n^\ee\neq O_n^{\ee'}$ for $\ee\neq\ee'$. The quantum group $O_n^\ee$ acts maximally on the $\ee$-sphere.
\end{theorem}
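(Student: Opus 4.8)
The plan is to make the maximal action (Theorem \ref{Action}) the substantive part and then to read off the ``no group'' and distinctness assertions (Proposition \ref{OnEpsNoncommutative}) from it, together with the noncommutativity and mutual distinctness of the spheres (Theorem \ref{VglSphaeren}). First I would verify that
\[\alpha\colon C(S_{\RR,\ee}^{n-1})\to C(S_{\RR,\ee}^{n-1})\otimes C(O_n^\ee),\qquad \alpha(x_i)=\sum_j x_j\otimes u_{ji},\]
is a well-defined unital $*$-homomorphism. Self-adjointness of $\alpha(x_i)$ is immediate from $x_j=x_j^*$ and $u_{ji}=u_{ji}^*$; the identity $\sum_i\alpha(x_i)^2=1\otimes 1$ is exactly the orthogonality relation $\sum_i u_{ji}u_{ki}=\delta_{jk}$. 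The only substantial point is that $\alpha$ respects $x_kx_l=x_lx_k$ when $\ee_{kl}=1$: expanding $\alpha(x_k)\alpha(x_l)=\sum_{a,b}x_ax_b\otimes u_{ak}u_{bl}$ and the reversed product and comparing is precisely what the three cases of $R^\ee$ are engineered to guarantee. Coassociativity and the counit property hold because $u$ is a corepresentation, so $\alpha$ is a genuine coaction.

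For maximality, let $(B,\Delta_B)$ be any compact quantum group coacting on the sphere linearly, say $\beta(x_i)=\sum_j x_j\otimes v_{ji}$; I must show that $v=(v_{ij})$ satisfies the defining relations of $C(O_n^\ee)$, for then universality yields the surjection $C(O_n^\ee)\to B$ witnessing maximality. Linear independence of $\{x_j\}$ turns self-adjointness of $\beta(x_i)$ into $v_{ji}=v_{ji}^*$, and $\sum_i\beta(x_i)^2=1$ into $\sum_i v_{ji}v_{ki}=\delta_{jk}$; the opposite orthogonality relation $\sum_i v_{ij}v_{ik}=\delta_{jk}$ I would recover from the quantum group structure, since $v$ is a finite-dimensional corepresentation, hence equivalent to a unitary one, on which the antipode acts by $v_{ij}\mapsto v_{ji}$. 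Applying $\beta$ to $x_kx_l=x_lx_k$ for $\ee_{kl}=1$ gives $\sum_{a,b}x_ax_b\otimes(v_{ak}v_{bl}-v_{al}v_{bk})=0$, and the $R^\ee$ relations must be extracted from this. Here lies the main obstacle: the quadratic monomials $x_ax_b$ are \emph{not} independent in $C(S_{\RR,\ee}^{n-1})$ — they obey $x_ax_b=x_bx_a$ for $\ee_{ab}=1$ and $\sum_a x_a^2=1$ — so the comparison must be done modulo these relations against an explicit basis of the degree-$\le 2$ part of the $\ee$-sphere, of the kind controlled in the proof of Theorem \ref{VglSphaeren}. Matching coefficients of the independent partially commuting monomials yields the third case of $R^\ee$ (free rows $\ee_{ab}=0$, commuting columns $\ee_{kl}=1$) directly; the second case then follows by the row--column transpose symmetry implemented by the antipode, and the first (full commutation) by combining these with orthogonality. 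Checking that this bookkeeping produces precisely the three cases, and nothing weaker, is where I expect the real work to be.

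Finally, for the ``no group'' and distinctness claims, the unifying tool is the first-column map. Taking $k=l=1$ in $R^\ee$ (so $\ee_{kl}=\ee_{11}=0$), the second case forces $u_{i1}u_{j1}=u_{j1}u_{i1}$ exactly when $\ee_{ij}=1$ and imposes nothing when $\ee_{ij}=0$, so $x_i\mapsto u_{i1}$ defines a $*$-homomorphism $\phi\colon C(S_{\RR,\ee}^{n-1})\to C(O_n^\ee)$. I would show $\phi$ injective by constructing a retraction $\psi\colon C(O_n^\ee)\to C(S_{\RR,\ee}^{n-1})$ with $\psi(u_{i1})=x_i$ — equivalently, realizing the $\ee$-sphere as the homogeneous space $O_n^\ee/O_{n-1}^\ee$ in the spirit of Banica--Goswami. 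For $\ee\neq\ee_{\comm}$ this embeds the noncommutative sphere (Theorem \ref{VglSphaeren}) into $C(O_n^\ee)$, so $O_n^\ee$ is not a group. For distinctness, if $\ee\neq\ee'$ choose $i_0,j_0$ with, say, $\ee_{i_0j_0}=1$ and $\ee'_{i_0j_0}=0$; then $u_{i_01}u_{j_01}=u_{j_01}u_{i_01}$ holds in $C(O_n^\ee)$ but fails in $C(O_n^{\ee'})$ (via the embedded, distinct spheres of Theorem \ref{VglSphaeren}), so no generator-preserving isomorphism can exist.
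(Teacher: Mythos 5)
Your overall architecture is reasonable, but the two steps you explicitly defer are precisely where the content of the theorem lies, so as written the proof has genuine gaps. First, in the maximality argument you reduce everything to extracting the three cases of $R^\ee$ from $\sum_{a,b}x_ax_b\otimes(v_{ak}v_{bl}-v_{al}v_{bk})=0$, and you acknowledge that this requires comparing coefficients against an independent family of degree-two monomials in $C(S_{\RR,\ee}^{n-1})$ --- but you never produce such a family, and no basis of the quadratic part of the $\ee$-sphere is established anywhere (it is not what Theorem \ref{VglSphaeren} controls; that theorem only separates spheres via a specific representation). The paper avoids this entirely: instead of coefficient comparison it composes the coaction identity with concrete $*$-homomorphisms $\eta_k$, $\sigma_{kl}$ and $\tau_{kl}$ that collapse the sphere to $\CC$ or to $C(G)\otimes C(S^1_{\RR,+})$, and then uses only the single fact $x_1x_2\neq x_2x_1$ in $C(S^1_{\RR,+})$ (Lemma \ref{RepSphere}(a)) to separate the terms. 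Some such device is indispensable and is missing from your argument. (Your idea of using one coaction plus the antipode in place of the paper's two actions $\alpha'$, $\beta'$ is legitimate in principle, since the derived relations are polynomial and the antipode transposes $v$ on the Hopf $*$-algebra once orthogonality of $v$ is known; but it only helps after the extraction step is actually done. Note also that the theorem assumes both the left and the right action, so you are attempting a strictly stronger statement than you need.)

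Second, your route to ``$O_n^\ee$ is not a group'' and to $O_n^\ee\neq O_n^{\ee'}$ hinges on injectivity of the first-column map $x_i\mapsto u_{i1}$, which you propose to obtain from a retraction realizing $S_{\RR,\ee}^{n-1}$ as a homogeneous space $O_n^\ee/O_{n-1}^\ee$. That is a substantial unproven claim (even in the free case $\ee=\ee_{\free}$ this is a nontrivial theorem, and its $\ee$-version is established nowhere), and without it you cannot conclude that $u_{i_01}$ and $u_{j_01}$ fail to commute in $C(O_n^{\ee'})$. The paper's device is much lighter and you should adopt it: the diagonal map $u_{ij}\mapsto\delta_{ij}z_i$ into $C^*(\mathbb Z_2^\ee)$ exists by a one-line check of the universal property (Lemma \ref{RepOnEps}), and composing with the representation $\sigma_{\ee}$ of Lemma \ref{RepSphere}(b) exhibits non-commuting images exactly when $\ee_{ij}=0$, which yields both the noncommutativity of $C(O_n^\ee)$ for $\ee\neq\ee_{\comm}$ and the pairwise distinctness of the $O_n^\ee$.
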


Note that while the $\ee$-sphere $S_{\RR,\ee}^{n-1}$ interpolates the commutative sphere $S_{\RR}^{n-1}$ and the free sphere $S_{\RR,+}^{n-1}$, this is \emph{not} the case for the $\ee$-orthogonal quantum groups  --
if $\ee\neq \ee_{\comm}$ and $\ee\neq \ee_{\free}$, we have (see Prop. \ref{PropNoInterpolation}):
\[O_n\not\subset O_n^\ee\subsetneq O_n^+\qquad\textnormal{and}\qquad S_n\not\subset O_n^\ee\subsetneq O_n^+\]

Our original question about the symmetries of $\ee$-independence is answered by the following weak version of a de Finetti theorem.

\begin{theorem}[Thm. \ref{deFinetti}]
Let $x_1,\ldots,x_n$ be selfadjoint random variables in a noncommutative probability space $(A,\phi)$ such that $x_ix_j=x_jx_i$ if $\ee_{ij}=1$.  If $x_1,\ldots,x_n$ are $\ee$-independent and identically distributed, then their distribution is invariant under $S_n^\ee$.
\end{theorem}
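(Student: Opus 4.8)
The statement is the ``easy'' half of a de Finetti theorem, so the plan is to turn invariance into a single family of moment identities and then verify these using the defining relations of $S_n^\ee$ together with the moment--cumulant description of $\ee$-independence.

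First I would set up the coaction. Regard the joint distribution of $x_1,\dots,x_n$ as a functional $\mu$ on the universal $\ee$-commutative $*$-algebra generated by $X_1,\dots,X_n$ (it factors through this algebra because $x_ix_j=x_jx_i$ whenever $\ee_{ij}=1$), and let $\beta(X_i)=\sum_j X_j\otimes u_{ji}$. This $\beta$ is a well-defined coaction: for $\ee_{ij}=1$ the relations $\mathring R^\ee$ force $u_{ki}u_{lj}=0$ unless $\ee_{kl}=1$, in which case $u_{ki}u_{lj}=u_{lj}u_{ki}$ and $X_kX_l=X_lX_k$, so $\beta(X_i)$ and $\beta(X_j)$ commute (this is the same computation that underlies Theorem~\ref{Action}). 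Invariance of $\mu$ means $(\mu\otimes\id)\beta=\mu(\cdot)\,1$, and by multiplicativity it is equivalent to the family of identities
\begin{equation*}
\sum_{j_1,\dots,j_k}\mu(X_{j_1}\cdots X_{j_k})\,u_{j_1i_1}\cdots u_{j_ki_k}=\mu(X_{i_1}\cdots X_{i_k})\,1
\tag{$\ast$}
\end{equation*}
indexed by the multi-indices $\ii=(i_1,\dots,i_k)$.

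Next I would feed in the moment--cumulant formula for $\ee$-independent identically distributed variables (Mlotkowski, SpW): $\mu(X_{j_1}\cdots X_{j_k})=\sum_{\pi\in\mathcal{NC}_\ee(\jj)}\kappa_\pi$, where $\mathcal{NC}_\ee(\jj)$ is the set of partitions $\pi$ of $\{1,\dots,k\}$ whose blocks are monochromatic for $\jj$ (that is, $\pi\le\ker\jj$) and any two of whose crossing blocks carry $\ee$-commuting colours, and where $\kappa_\pi=\prod_{V\in\pi}\kappa_{|V|}$ depends only on the block sizes by identical distribution. Substituting this into the left side of $(\ast)$ and interchanging the two sums gives $\sum_\pi\kappa_\pi\,T_\pi(\ii)$ with
\begin{equation*}
T_\pi(\ii)=\sum_{\substack{\jj:\ \pi\le\ker\jj\\ \pi\ \ee\text{-admissible for }\jj}}u_{j_1i_1}\cdots u_{j_ki_k}.
\end{equation*}
The theorem then reduces to the single key identity $T_\pi(\ii)=[\pi\in\mathcal{NC}_\ee(\ii)]\cdot 1$, since this turns $\sum_\pi\kappa_\pi T_\pi(\ii)$ into $\sum_{\pi\in\mathcal{NC}_\ee(\ii)}\kappa_\pi=\mu(X_{i_1}\cdots X_{i_k})$, which is exactly the right side of $(\ast)$.

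Finally I would prove the key identity from the relations of $S_n^\ee$. Writing $\jj$ as a colouring of the blocks of $\pi$, the plan is to show that every surviving colouring is forced to match $\ii$: if $\pi\not\le\ker\ii$, then two positions in a common block of $\pi$ meet distinct columns and are annihilated by the magic-unitary orthogonality $u_{ai}u_{al}=\delta_{il}u_{ai}$ after the legal commutations provided by case~1 of $\mathring R^\ee$; if instead $\pi$ is $\ee$-crossing for $\ii$, a crossing pair of commuting-colour rows meets a non-$\ee$-commuting pair of columns and is annihilated by cases~2--3 of $\mathring R^\ee$. What remains are exactly the colourings compatible with $\ii$, and these are summed up to $1$ by the row/column relations $\sum_a u_{ai}=\sum_a u_{ia}=1$. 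I expect this last step to be the main obstacle: for crossing $\pi$ and long words one must argue by induction (peeling off an interval or outer block via a column sum) that any incompatibility can indeed be brought to an adjacent vanishing pair using only the commutations licensed by case~1, and that these commutations suffice to regroup the word by blocks. The representative difficulty already appears for $\pi=\{\{1,3\},\{2,4\}\}$, where one must check directly that $\sum_{c_1,c_2:\ \ee_{c_1c_2}=1}u_{c_1i_1}u_{c_2i_2}u_{c_1i_3}u_{c_2i_4}$ equals $[i_1=i_3][i_2=i_4][\ee_{i_1i_2}=1]\,1$.
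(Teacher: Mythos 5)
Your reduction is exactly the paper's: invariance is unwound via the moment--cumulant formula of Proposition \ref{MCFormula}, identical distribution makes $\kappa_\pi$ depend only on $\pi$, and after exchanging the two sums everything rests on the single identity
\[
\sum_{i(1),\ldots,i(k)}\delta_{\pi\in NC^\ee[i]}\,u_{i(1)j(1)}\cdots u_{i(k)j(k)}=\delta_{\pi\in NC^\ee[j]}\cdot 1,
\]
which is Proposition \ref{CorIntertwiner}(b). Up to that point the proposal is correct and coincides with the paper step for step (the coaction well-definedness check is not even needed, since the paper's notion of invariance is directly the family of identities you call $(\ast)$).

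The gap is that this key identity is asserted rather than proved, and it is where essentially all of the work lies. Your description of the mechanism --- kill colourings violating $\pi\le\ker i$ by magic-unitary orthogonality, kill colourings with a forbidden crossing by the vanishing cases of $\mathring R^\ee$, sum the survivors to $1$ --- is the right picture, but the step you yourself flag as ``the main obstacle'' is precisely the content of the paper's argument: one must show that the commutations licensed by $(\mathring R^\ee1)$ always suffice to bring an offending pair of factors adjacent, and that each such elementary move transforms the indicator $\delta_{\pi\in NC^\ee[\,\cdot\,]}$ correctly. The paper handles this by an explicit algorithm factoring the functional $e_i\mapsto\delta_{\pi\in NC^\ee[i]}$ into intertwiners: peel off noncrossing subpartitions with $T_\sigma^*$, and otherwise apply $R^1_{\crosspart}$ (Lemma \ref{LemRRingEpsIntertwiner}) at an adjacent pair of legs belonging to two crossing blocks. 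The existence of such a pair, with the minimality property needed for the algorithm to terminate, is not obvious and is the object of the separate combinatorial Lemma \ref{LemExistenceOfL}; the verification that each move preserves membership in $NC^\ee[\,\cdot\,]$ (statement $(*)$ in the proof of Proposition \ref{CorIntertwiner}) takes a further page of case analysis. Your $\pi=\{\{1,3\},\{2,4\}\}$ example is correct, but the general induction you gesture at still has to be supplied, and it is the heart of the theorem.
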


We extend this de Finetti Theorem to the invariances by the quotients of $H_n^+$, $B_n^+$ and $O_n^+$ by $\mathring R^\ee$.

\section{Preliminaries on $\ee$-independence}

The notion of $\ee$-independence as a mixture of classical and free independence has been introduced by the first author and Wysoczanski \cite{SpW} very recently, rediscovering Mlotkowski's $\Lambda$-independence \cite{Mlo}. We review its main features here.
Throughout the whole  article we denote by $\ee$ an $n\times n$-matrix such that:
\begin{itemize}
\item $\ee_{ij}\in\{0,1\}$ for all $i,j=1,\ldots,n$
\item $\ee$ is symmetric
\item $\ee_{ii}=0$ for all $i=1,\ldots,n$
\end{itemize}

\begin{definition}\label{DefEpsInd}
Let $(A,\phi)$ be a noncommutative probability space. We say that unital subalgebras $A_1,\ldots,A_n\subset A$ are \emph{$\ee$-independent}, if we have:
\begin{itemize}
\item[(i)] The algebras $A_i$ and $A_j$ commute, if $\ee_{ij}=1$.
\item[(ii)] Moreover, for any $k\in\NN$ and any choice $a_1,\ldots,a_k\in A$ with $a_j\in A_{i(j)}$ and the properties
\begin{itemize}
\item[$\bullet$] $\phi(a_j)=0$ for all $j=1,\ldots,k$,
\item[$\bullet$] and for any $1\leq p<r\leq k$ with $i(p)=i(r)$ there is a $q$ with $p<q<r$ such that $\ee_{i(p)i(q)}=0$ and $i(p)\neq i(q)$,
\end{itemize}
 we have $\phi(a_1\cdots a_k)=0$.
\end{itemize}
(Selfadjoint) variables $x_1,\ldots,x_n\in A$ are $\ee$-independent, if the algebras $\textnormal{alg}(x_j,1)\subset A$ are $\ee$-independent.
\end{definition}

\begin{example}\label{ExEps}
Here are a few examples of $\ee$-independent variables.
\begin{itemize}
\item[(a)] Let $\ee_{\comm}\in M_n(\{0,1\})$ be the matrix defined by  $\ee_{ij}=1$ for all $i\neq j$ and $\ee_{ii}=0$ for all $i$. Variables $x_1,\ldots, x_n$ are $\ee$-independent with respect to $\ee_{\comm}$ if and only if they all commute and are classically independent. Indeed, the constraint on the indices in Definition \ref{DefEpsInd}(ii) yields that all indices must be mutually different, in case $\ee=\ee_{\comm}$. Now, by the usual centering trick on $a_j:=x_{i(j)}^{m_j}-\phi(x_{i(j)}^{m_j})$ with mutually different indices $i(j)$, we infer
\[\phi(x_{i(1)}^{m_1}\cdots x_{i(k)}^{m_k})=\prod_{j=1}^k \phi(x_{i(j)}^{m_j})\]
and hence classical independence. See also \cite[Prop. 3.2]{SpW}.
\item[(b)] Let $\ee_{\free}\in M_n(\{0,1\})$ be defined by $\ee_{ij}=0$ for all $i,j$. Variables $x_1,\ldots, x_n$ are $\ee$-independent with respect to $\ee_{\free}$ if and only if they are freely independent. Note that if $\ee=\ee_{\free}$, the constraint on the indices in Definition \ref{DefEpsInd}(ii) yields that neighbouring indices must be different. See also \cite[Prop. 3.2]{SpW}.
\item[(c)] Let $\ee\in M_{n+m}(\{0,1\})$ be the matrix given by:
\[\ee=\begin{pmatrix}\ee_{\comm} &0\\0&0\end{pmatrix},\qquad \ee_{ij}=\begin{cases} 1 &\textnormal{ if } i\leq n\textnormal{ and } j\leq n\textnormal{ and } i\neq j\\0&\textnormal{ otherwise}\end{cases}\]
If variables $x_1,\ldots, x_n,x_{n+1},\ldots,x_{n+m}$ are $\ee$-independent with respect to this matrix, then
\begin{itemize}
\item[(i)] $x_1,\ldots,x_n$ are classically independent,
\item[(ii)] $x_{n+1},\ldots,x_{n+m}$ are freely independent,
\item[(iii)] and $\{x_1,\ldots,x_n\}$ and $\{x_{n+1},\ldots,x_{n+m}\}$ are free.
\end{itemize}
\item[(d)] The iterated grouping of variables 
\begin{itemize}
\item[(i)] $x_1$ and $x_2$ are independent,
\item[(ii)] $x_3$ and $x_4$ are independent,
\item[(iii)] and $\{x_1,x_2\}$ is free from $\{x_3,x_4\}$
\end{itemize}
is represented by the following matrix  $\ee\in M_4(\{0,1\})$:
\[\ee=\begin{pmatrix}0&1&0&0\\1&0&0&0\\0&0&0&1\\0&0&1&0\end{pmatrix}\]
\item[(e)] Swapping the terms ``free'' and ``independent'' in (d), we obtain:
\[\ee=\begin{pmatrix}0&0&1&1\\0&0&1&1\\1&1&0&0\\1&1&0&0\end{pmatrix}\]
\item[(f)] A constellation that cannot be obtained from iterated grouping is the following (the motivating example in \cite{SpW}):
\begin{itemize}
\item[(i)] $x_i$ and $x_{i+1}$ are free, for $i=1,2,3,4$, as well as $x_5$ and $x_1$,
\item[(ii)] but all other pairs are independent.
\end{itemize}
The matrix  $\ee\in M_5(\{0,1\})$ in this situation is:
\[\ee=\begin{pmatrix}0&0&1&1&0\\0&0&0&1&1\\1&0&0&0&1\\1&1&0&0&0\\0&1&1&0&0\end{pmatrix}\]
\end{itemize}
\end{example}

Like in the classical and the free case, we have a moment-cumulant formula for $\ee$-independence. Let us first describe its combinatorics.

\begin{definition}
For $i=(i(1),\ldots,i(k))\in\{1,\ldots,n\}^k$ we define $NC^\ee[i]$ as the set of all partitions $\pi\in P(k)$ such that
\begin{itemize}
\item $\pi\leq \ker i$, i.e. if $1\leq p<q\leq k$ are in the same block of $\pi$, then $i(p)=i(q)$,
\item and $\pi$ is \emph{$(\ee,i)$-noncrossing}, i.e. if there are indices\linebreak $1\leq p_1<q_1<p_2<q_2\leq k$ such that $p_1$ and $p_2$ are in a block $V_p$ of $\pi$ and $q_1$ and $q_2$ are in a block $V_q$ of $\pi$ with $V_p\neq V_q$, then $\ee_{i(p_1)i(q_1)}=1$.
\end{itemize}
\end{definition}

The idea is, that $NC^\ee[i]$ contains refinements of $\ker i$ which are allowed to have crossings only if the $\ee$-entry of the crossing is 1.

\begin{example}
\begin{itemize}
\item[(a)] For $\ee=\ee_{\comm}$, all kinds of crossings between blocks on different indices are allowed, but not for different blocks on the same index. Hence we have:
\[NC^{\ee_{\comm}}[i]=\prod_{V\in\ker i} NC(V)\]
\item[(b)] For $\ee=\ee_{\free}$, no crossings are allowed and hence:
\[NC^{\ee_{\free}}[i]=\{\pi\in NC(k)\;|\;\pi\leq\ker i\}\]
\end{itemize}
\end{example}

We now come to the moment-cumulant formula for $\ee$-independence found by the first author and Wysoczanski. We only formulate it for the situation of $\ee$-independent variables (rather than for algebras).

\begin{proposition}[{\cite[Thm. 4.2]{SpW}}]\label{MCFormula}
Let $x_1,\ldots,x_n\in A$ be $\ee$-independent and let $i=(i(1),\ldots, i(k))\in \{1,\ldots,n\}^k$. Then:
\[\phi(x_{i(1)}\cdots x_{i(k)})=\sum_{\pi\in NC^\ee[i]}\kappa_\pi(x_{i(1)},\ldots, x_{i(k)})\]
Here, $\kappa_\pi(x_{i(1)},\ldots, x_{i(k)})$ is the product of the free cumulants for each block.
\end{proposition}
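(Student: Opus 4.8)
The plan is to prove the identity by induction on $k$, showing that both sides obey the same recursion built from the two defining properties of $\ee$-independence in Definition \ref{DefEpsInd}. Equivalently, one checks that properties (i) and (ii), together with the individual distributions of the $x_\ell$ (i.e.\ their single-variable free cumulants $\kappa_m(x_\ell,\ldots,x_\ell)$), already determine every mixed moment $\phi(x_{i(1)}\cdots x_{i(k)})$, and that the combinatorial sum on the right-hand side satisfies these same relations. Since $\pi\leq\ker i$ forces every block of $\pi$ to be monochromatic, each factor of $\kappa_\pi$ is a single-variable free cumulant, so the right-hand side is genuinely a functional of the marginals alone; this is exactly what makes the uniqueness strategy viable.

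For the moment side I would first center, writing $x_{i(j)}=\mathring x_{i(j)}+\phi(x_{i(j)})$ with $\mathring x_{i(j)}=x_{i(j)}-\phi(x_{i(j)})$, and expand $\phi(x_{i(1)}\cdots x_{i(k)})$ into a sum of moments of centered products $\phi(\mathring x_{i(j_1)}\cdots\mathring x_{i(j_m)})$. If the index sequence $(i(j_1),\ldots,i(j_m))$ satisfies the hypothesis of Definition \ref{DefEpsInd}(ii) --- every repeated index separated by some index $q$ with $\ee_{i(p)i(q)}=0$ and $i(p)\neq i(q)$ --- then the centered moment vanishes. If it does not, there is a repeated pair $i(p)=i(r)$ all of whose intermediate indices either equal it or commute with it; taking a closest such pair and using the commutation relation (i) one slides the two occurrences together, merges them into a higher power of a single variable, re-centers, and thereby reduces the length. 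Carrying out this reduction systematically yields a recursion expressing $\phi(x_{i(1)}\cdots x_{i(k)})$ through shorter moments and the free cumulants of the individual variables.

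For the cumulant side I would establish the matching recursion for $NC^\ee[i]$. Distinguishing a suitable position (say the block containing the first point), one decomposes an $(\ee,i)$-noncrossing partition into the data of that block together with the $(\ee,i)$-noncrossing partitions induced on the complementary index segments. The essential point is that, unlike the purely free case where the block of the first point splits the remaining points into independent intervals, here a block may legitimately cross other blocks precisely at positions whose indices commute with it; the decomposition must therefore keep track of exactly those commuting crossings. One then checks, term by term, that this block-decomposition of the cumulant sum reproduces the centering-and-sliding recursion found on the moment side, and that the two agree on the marginals via the ordinary single-variable free moment--cumulant formula. As a consistency check one verifies the extreme cases $\ee=\ee_{\comm}$ and $\ee=\ee_{\free}$, where $NC^\ee[i]$ specializes to $\prod_{V\in\ker i}NC(V)$ and to $\{\pi\in NC(k):\pi\leq\ker i\}$ respectively and the recursion collapses to the classical and the free ones.

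The main obstacle is this combinatorial matching. In the free moment--cumulant proof the recursion is clean because noncrossing partitions nest perfectly; here the permitted crossings at commuting indices destroy that nesting, so the delicate step is to show that the bookkeeping of commuting crossings in the decomposition of $NC^\ee[i]$ corresponds exactly to the freedom one has, via the commutation relation (i), to reorder and collect variables when a centered product fails the vanishing hypothesis of Definition \ref{DefEpsInd}(ii). Making this correspondence precise --- and in particular verifying that under the reduction no $(\ee,i)$-noncrossing partition is counted twice or omitted --- is where the real work lies.
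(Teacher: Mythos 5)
First, a point of reference: the paper does not prove Proposition \ref{MCFormula} at all --- it is quoted verbatim from \cite[Thm.~4.2]{SpW} --- so there is no in-paper argument to measure your attempt against; the comparison has to be with the strategy one would use in that reference.

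Your outline correctly identifies the natural two-step plan, and the first step is essentially sound as described: after centering, an index word either satisfies the separation hypothesis of Definition \ref{DefEpsInd}(ii) (and the centered moment vanishes), or one can pick an offending pair $i(p)=i(r)$ with $r-p$ minimal, observe that minimality forces every intermediate index to differ from $i(p)$ and to commute with it, slide the two occurrences together using (i), merge and re-center. This shows that (i), (ii) and the marginal distributions determine all mixed moments, so it suffices to verify that the right-hand side of the formula, taken as a definition of candidate moments, satisfies the same properties with the same marginals. The genuine gap is that you never carry out this second step, and it is the entire content of the theorem: everything preceding it is routine. Concretely, you would need to prove (a) that for a centered word satisfying the separation hypothesis, the corresponding sum over $NC^\ee[i]$ vanishes --- which is where the interplay between the permitted crossings and the condition $\ee_{i(p_1)i(q_1)}=1$ must actually be used, and where double-counting or omission in your proposed block decomposition could occur --- and (b) that transposing two adjacent letters with $\ee_{i(l)i(l+1)}=1$ induces a $\kappa_\pi$-preserving bijection $NC^\ee[i]\to NC^\ee[i']$ (this easier half is in effect proved in Step~5 of Proposition \ref{CorIntertwiner} of the paper). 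You explicitly flag (a) as ``where the real work lies'' and stop; the consistency checks at $\ee=\ee_{\comm}$ and $\ee=\ee_{\free}$ do not substitute for it, since those are exactly the two cases in which the crossing bookkeeping degenerates. An additional small point: after the merge step the recursion involves words whose letters are powers $x_{i(j)}^{m}$ rather than the generators themselves, so the matching on the cumulant side must be formulated for such words from the outset, which your sketch does not address.
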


\section{The $\ee$-sphere $S_{\RR,\ee}^{n-1}$}\label{SectEpsSphere}

The sphere in $\RR^n$ (also called the \emph{commutative sphere}) is given by:
\[S_{\RR}^{n-1}=\{(x_1,\ldots,x_n)\in\RR^n\;|\;\sum_i x_i^2=1\}\]
The algebra of continuous functions over it may be written as a universal $C^*$-algebra:
\[C(S_{\RR}^{n-1})=C^*(x_1,\ldots,x_n\;|\; x_i=x_i^*, \sum_i x_i^2=1, x_ix_j=x_jx_i \;\forall i,j)\]

A natural noncommutative analogue of the sphere is given by the \emph{(maximaly) noncommutative sphere} as introduced by Banica and Goswami \cite{BanGos}:
\[C(S_{\RR,+}^{n-1}):=C^*(x_1,\ldots,x_n\;|\; x_i=x_i^*, \sum_i x_i^2=1)\]
In the philosophy of noncommutative compact spaces, we may speak of the noncommutative sphere $S_{\RR,+}^{n-1}$ as a noncommutative compact space which is only defined via the algebra $C(S_{\RR,+}^{n-1})$ of (noncommutative) functions over it. Our next definition is an interpolation between the above spheres governed by the matrix $\ee$.

\begin{definition}
The \emph{$\ee$-sphere} $S_{\RR,\ee}^{n-1}$ is defined via the universal $C^*$-algebra:
\[C(S_{\RR,\ee}^{n-1}):=C^*(x_1,\ldots,x_n\;|\; x_i=x_i^*, \sum_i x_i^2=1, x_ix_j=x_jx_i, \textnormal{ if } \ee_{ij}=1)\]
\end{definition}

Note, that the commutative sphere $S_{\RR}^{n-1}$ is an $\ee$-sphere for the matrix $\ee_{\textnormal{comm}}$ of Example \ref{ExEps}(a). Moreover, the noncommutative sphere $S_{\RR,+}^{n-1}$ is an $\ee$-sphere with respect to $\ee_{\textnormal{free}}$ of Example \ref{ExEps}(b).
The $\ee$-sphere is noncommutative, if $\ee\neq\ee_{\comm}$, i.e. the $C^*$-algebra $C(S_{\RR,\ee}^{n-1})$ is noncommutative in this case. We prove it by using representations of the $\ee$-sphere which factor through the group $C^*$-algebras of certain Coxeter groups (see also \cite{BanCox,BanSurv}). 

\begin{definition}\label{DefCox}
Let $\mathbf F_n$ be the free group with $n$ generators $a_1,\ldots,a_n$ and denote by $\mathbf F_n^\ee$ the quotient of $\mathbf F_n$ by the relations $a_ia_j=a_ja_i$ if $\ee_{ij}=1$. Denote by $\mathbb Z_2^\ee$ the quotient of $\mathbf F_n^\ee$ by the relations $a_i^2=e$.
\end{definition}

By $\mathbb Z_2=\mathbb Z/2\mathbb Z$ we denote the cyclic group of order two. We may view $\mathbb Z_2^\ee$ as the quotient of the $n$-fold free product $\mathbb Z_2^{*n}$ by the relations $a_ia_j=a_ja_i$ if $\ee_{ij}=1$. It is a Coxeter group with the presentation:
\[\mathbb Z_2^\ee=\langle a_1,\ldots,a_n\;|\; (a_ia_j)^{m_{ij}}=e\rangle,\qquad 
m_{ij}=\begin{cases} 1 &\textnormal{ if } i=j\\2 &\textnormal{ if } \ee_{ij}=1\\\infty &\textnormal{ if }\ee_{ij}=0\end{cases}\]

\begin{example}
We have:
\begin{itemize}
\item[(a)] $\ZZ_2^\ee=\ZZ_2\times\ldots\times\ZZ_2$ for $\ee=\ee_{\comm}$ as in Example \ref{ExEps}(a)
\item[(b)] $\ZZ_2^\ee=\ZZ_2*\ldots*\ZZ_2$ for $\ee=\ee_{\free}$ as in Example \ref{ExEps}(b)
\item[(c)] $\ZZ_2^\ee=\ZZ_2^{\times n}*\ZZ_2^{*m}$ for $\ee$ as in Example \ref{ExEps}(c)
\item[(d)] $\ZZ_2^\ee=(\ZZ_2\times\ZZ_2)*(\ZZ_2\times\ZZ_2)$ for $\ee$ as in Example \ref{ExEps}(d)
\item[(e)] $\ZZ_2^\ee=(\ZZ_2*\ZZ_2)\times(\ZZ_2*\ZZ_2)$ for  $\ee$ as in Example \ref{ExEps}(e)
\item[(f)] For  $\ee$ as in Example \ref{ExEps}(f), we cannot write $\ZZ_2^\ee$ as an iteration of free and direct products.
\end{itemize}
\end{example}

The full group $C^*$-algebra associated to $\mathbb Z_2^\ee$ is the following universal $C^*$-algebra:
\[C^*(\mathbb Z_2^\ee)=C^*(z_1,\ldots, z_n\;|\; z_i=z_i^*, z_i^2=1, z_iz_j=z_jz_i \textnormal{ if } \ee_{ij}=1)\]

\begin{lemma}\label{RepSphere}
Let $H$ be a two-dimensional Hilbert space with orthonormal basis $e_1$ and $e_2$.
\begin{itemize}
\item[(a)] For $n=2$ and $\ee=\ee_{\free}$, the Coxeter group $\mathbb Z_2^\ee=\mathbb Z_2*\mathbb Z_2$ may be represented on $H$ by $\pi:C^*(\mathbb Z_2*\mathbb Z_2)\to B(H)$ defined as:
\[z_1\mapsto a:=\begin{pmatrix}-1&0\\0&1\end{pmatrix},\qquad
z_2\mapsto b:=\begin{pmatrix}0&1\\1&0\end{pmatrix}\]
Note that $a$ and $b$ do not commute.
\item[(b)] Let $n\in\NN$ and $\ee$ be arbitrary. Put $H_{ij}:=H$ for $1\leq i<j\leq n$. The representation $\sigma_{\ee}: C(\mathbb Z_2^\ee)\to B(\bigoplus_{i<j} H_{ij})$ given by
\[\sigma_{\ee}(z_k)_{|H_{ij}}=\begin{cases} 
a &\textnormal{ if } k=i\textnormal{ and }\ee_{ij}=0\\
b &\textnormal{ if } k=j\textnormal{ and }\ee_{ij}=0\\
\id_H & \textnormal{ otherwise }\end{cases}\]
is such that $\sigma_{\ee}(z_i)$ and $\sigma_{\ee}(z_j)$ commute if and only if $\ee_{ij}=1$.
\item[(c)] We may represent the $\ee$-sphere on the Coxeter group $\mathbb Z_2^\ee$ via:
\[\phi_\ee:C(S_{\RR,\ee}^{n-1})\to C^*(\mathbb Z_2^\ee),\qquad x_i\mapsto \frac{1}{\sqrt n}z_i\]
\end{itemize}
\end{lemma}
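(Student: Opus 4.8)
The plan is to verify in each part that the proposed assignment respects the defining relations of the source $C^*$-algebra, so that it extends to a $*$-homomorphism by the appropriate universal property; only the ``if and only if'' in (b) carries genuine content. For (a), I would check directly that $a$ and $b$ satisfy the relations presenting $C^*(\ZZ_2*\ZZ_2)$: both matrices are real and symmetric, hence selfadjoint, and a one-line computation gives $a^2=b^2=\id_H$. Thus $\pi$ is a well-defined $*$-representation, and comparing $ab$ with $ba$ (they differ in the off-diagonal signs) shows $a$ and $b$ do not commute.

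For (b), the key observation is that $\sigma_\ee$ is defined summand by summand, and on each block $H_{pq}$ every operator $\sigma_\ee(z_k)_{|H_{pq}}$ equals one of $a$, $b$, $\id_H$, each of which is selfadjoint with square $\id_H$. Hence the relations $z_k=z_k^*$ and $z_k^2=1$ hold on every block, so $\sigma_\ee$ defines a $*$-representation of the free product $C^*(\ZZ_2^{*n})$. To see that it descends to $C^*(\ZZ_2^\ee)$ and, at the same time, to establish the equivalence, I would argue blockwise. On a block $H_{pq}$ the only operators different from $\id_H$ are $\sigma_\ee(z_p)=a$ and $\sigma_\ee(z_q)=b$, and this occurs exactly when $\ee_{pq}=0$. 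If $\ee_{ij}=1$, then $\{i,j\}\neq\{p,q\}$ on every block with $\ee_{pq}=0$, so at most one of $z_i,z_j$ acts nontrivially there; since $\id_H$ is central, $\sigma_\ee(z_i)$ and $\sigma_\ee(z_j)$ commute, which also shows $\sigma_\ee$ factors through the quotient $C^*(\ZZ_2^\ee)$. Conversely, if $\ee_{ij}=0$ with $i<j$, then on $H_{ij}$ one has $\sigma_\ee(z_i)_{|H_{ij}}=a$ and $\sigma_\ee(z_j)_{|H_{ij}}=b$, which fail to commute by part (a); hence $\sigma_\ee(z_i)$ and $\sigma_\ee(z_j)$ do not commute globally.

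For (c), I would check that the elements $\frac{1}{\sqrt n}z_i\in C^*(\ZZ_2^\ee)$ satisfy the defining relations of $C(S_{\RR,\ee}^{n-1})$: each is selfadjoint since the $z_i$ are and $\frac{1}{\sqrt n}$ is real; the sphere relation holds because $\sum_i(\frac{1}{\sqrt n}z_i)^2=\frac{1}{n}\sum_i z_i^2=1$ using $z_i^2=1$; and for $\ee_{ij}=1$ the identity $(\frac{1}{\sqrt n}z_i)(\frac{1}{\sqrt n}z_j)=(\frac{1}{\sqrt n}z_j)(\frac{1}{\sqrt n}z_i)$ is inherited directly from the relation $z_iz_j=z_jz_i$ defining $\ZZ_2^\ee$. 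The universal property of $C(S_{\RR,\ee}^{n-1})$ then produces $\phi_\ee$.

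The main obstacle is the blockwise commutation analysis in (b): one has to see that the construction is arranged so that $z_i$ and $z_j$ act nontrivially on a common summand $H_{ij}$ precisely when $\ee_{ij}=0$, which is exactly what forces the commutation pattern of $\sigma_\ee$ to reproduce $\ee$ entry by entry. The remaining verifications in (a) and (c) are routine checks of relations feeding into universal properties.
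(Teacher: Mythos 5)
Your proof is correct and fills in exactly the routine verifications that the paper dismisses with ``The proof is straightforward'': checking the universal relations in (a) and (c), and the blockwise commutation analysis in (b), which is the only part with real content and which you handle correctly in both directions. No gaps.
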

\begin{proof}
The proof is straightforward.
\end{proof}

\begin{theorem}\label{VglSphaeren}
We have $S_{\RR,\ee}^{n-1}\neq S_{\RR,\ee'}^{n-1}$ for $\ee\neq\ee'$ in the sense that there is no $^*$-isomorphism $C(S_{\RR,\ee}^{n-1})\to C(S_{\RR,\ee'}^{n-1})$ sending generators to generators.
Moreover, $C(S_{\RR,\ee}^{n-1})$ is noncommutative as soon as $\ee\neq\ee_{\comm}$.
\end{theorem}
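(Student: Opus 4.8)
The plan is to reduce both assertions to a single \emph{commutation signature} statement: for $i\neq j$, the generators $x_i$ and $x_j$ commute in $C(S_{\RR,\ee}^{n-1})$ if and only if $\ee_{ij}=1$. Granting this, the noncommutativity claim is immediate (if $\ee\neq\ee_{\comm}$ there is a pair with $\ee_{ij}=0$, whose generators do not commute), and the distinguishing claim follows because any $^*$-isomorphism sending $x_i\mapsto x_i$ must preserve commutation of generators, hence preserve the whole matrix $\ee$ (both matrices being symmetric with zero diagonal).

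One direction of the signature statement is trivial: if $\ee_{ij}=1$, then $x_ix_j=x_jx_i$ is one of the defining relations. For the converse I would fix a pair $i<j$ with $\ee_{ij}=0$ and exhibit a single representation of $C(S_{\RR,\ee}^{n-1})$ in which the images of $x_i$ and $x_j$ fail to commute. The natural candidate is the composite $\psi:=\sigma_\ee\circ\phi_\ee$, where $\phi_\ee$ is the representation of Lemma \ref{RepSphere}(c) into $C^*(\mathbb Z_2^\ee)$ and $\sigma_\ee$ is the representation of Lemma \ref{RepSphere}(b) into $B(\bigoplus_{k<l}H_{kl})$. Since $\phi_\ee(x_i)=\frac{1}{\sqrt n}z_i$, it suffices to observe that, by Lemma \ref{RepSphere}(b), $\sigma_\ee(z_i)$ and $\sigma_\ee(z_j)$ do not commute precisely because on the summand $H_{ij}$ they restrict to the matrices $a$ and $b$ of part (a), which do not commute. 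Hence $\psi(x_i)\psi(x_j)\neq\psi(x_j)\psi(x_i)$, so $x_ix_j\neq x_jx_i$ in $C(S_{\RR,\ee}^{n-1})$.

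The two theorem statements then assemble as follows. For noncommutativity, if $\ee\neq\ee_{\comm}$ pick $i\neq j$ with $\ee_{ij}=0$; by the signature statement $x_i$ and $x_j$ do not commute, so the algebra is noncommutative. For the distinguishing part, suppose $\Phi:C(S_{\RR,\ee}^{n-1})\to C(S_{\RR,\ee'}^{n-1})$ is a $^*$-isomorphism with $\Phi(x_i)=x_i$ for all $i$. Since an isomorphism preserves (and reflects) vanishing of commutators, $x_i$ and $x_j$ commute in the source if and only if $x_i=\Phi(x_i)$ and $x_j=\Phi(x_j)$ commute in the target; by the signature statement this reads $\ee_{ij}=1\iff\ee'_{ij}=1$ for all $i\neq j$, which forces $\ee=\ee'$.

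The only genuine work lies in the converse of the signature statement, and that work is already packaged in Lemma \ref{RepSphere}: I do \emph{not} need faithfulness of $\psi$, only the existence of \emph{one} representation witnessing non-commutation on the prescribed summand $H_{ij}$. The main (and quite minor) obstacle is purely bookkeeping --- checking that on $H_{ij}$ the operator $\sigma_\ee(z_i)$ equals $a$ and $\sigma_\ee(z_j)$ equals $b$ for the chosen pair, which is exactly the defining case distinction of part (b). No estimates or limiting arguments are required.
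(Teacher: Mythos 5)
Your proposal is correct and follows essentially the same route as the paper: the paper also establishes the commutation signature (that $x_i$ and $x_j$ fail to commute when $\ee_{ij}=0$) by composing with $\sigma_\ee\circ\phi_\ee$ from Lemma \ref{RepSphere}, and derives both the noncommutativity and the distinguishing statement from it. Your packaging of this as a single ``signature'' lemma is slightly more systematic, but the underlying witness representation and the logic are identical.
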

\begin{proof}
If $\ee\neq \ee'$, we may find indices $i$ and $j$ such that $\ee_{ij}=1$ and $\ee'_{ij}=0$ (possibly after swapping the names for $\ee$ and $\ee'$). Assume that there is a $^*$-homomorphism $\psi:C(S_{\RR,\ee}^{n-1})\to C(S_{\RR,\ee'}^{n-1})$ mapping generators to generators. Composing it with $\sigma_{\ee'}\circ\phi_{\ee'}$ of the above lemma yields a contradiction, since $x_i$ and $x_j$ commute in $C(S_{\RR,\ee}^{n-1})$, but their images under $\sigma_{\ee'}\circ\phi_{\ee'}\circ\psi$ do not. Noncommutativity of $C(S_{\RR,\ee}^{n-1})$ for $\ee\neq\ee_{\comm}$ follows directly from applying $\sigma_\ee\circ\phi_\ee$.
\end{proof}

\begin{corollary}
Let $\ee\neq\ee_{\comm}$ and $\ee\neq\ee_{\free}$. Seen as noncommutative compact spaces, we have:
\[S_{\RR}^{n-1}\subsetneq S_{\RR,\ee}^{n-1}\subsetneq S_{\RR,+}^{n-1}\]
This means, we have surjective but non-injective $*$-homomorphisms
\[C(S_{\RR}^{n-1})\leftarrow C(S_{\RR,\ee}^{n-1}) \leftarrow C(S_{\RR,+}^{n-1})\]
sending generators to generators. 
\end{corollary}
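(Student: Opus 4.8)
The plan is to obtain the two surjections essentially for free from the universal property and to reduce the failure of injectivity to Theorem \ref{VglSphaeren}. The key observation is that all three algebras are presented on the \emph{same} generators $x_1,\dots,x_n$ subject to an increasing family of relations: $C(S_{\RR,+}^{n-1})$ imposes only selfadjointness and $\sum_i x_i^2=1$; $C(S_{\RR,\ee}^{n-1})$ adds the partial commutations $x_ix_j=x_jx_i$ for $\ee_{ij}=1$; and $C(S_{\RR}^{n-1})$ adds full commutativity. Since the images of the generators in a more constrained algebra automatically satisfy the fewer relations of a less constrained one, the universal property yields canonical $*$-homomorphisms
\[C(S_{\RR,+}^{n-1})\to C(S_{\RR,\ee}^{n-1})\to C(S_{\RR}^{n-1}),\qquad x_i\mapsto x_i.\]
Each map is surjective because its image contains all generators of the target, which in turn generate it as a $C^*$-algebra.

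Next, for non-injectivity I would invoke the standard fact that a surjective $*$-homomorphism between $C^*$-algebras which is moreover injective is automatically a $*$-isomorphism (an injective $*$-homomorphism is isometric, so a bijective one admits a $*$-homomorphic inverse). Hence, if $C(S_{\RR,+}^{n-1})\to C(S_{\RR,\ee}^{n-1})$ were injective it would be a generator-preserving $*$-isomorphism between $C(S_{\RR,\ee_{\free}}^{n-1})$ and $C(S_{\RR,\ee}^{n-1})$; since $\ee\neq\ee_{\free}$, this contradicts Theorem \ref{VglSphaeren}. Applying the same reasoning to $C(S_{\RR,\ee}^{n-1})\to C(S_{\RR}^{n-1})=C(S_{\RR,\ee_{\comm}}^{n-1})$ and using $\ee\neq\ee_{\comm}$ rules out injectivity there as well.

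Alternatively, one can produce explicit kernel elements to witness the failure of injectivity. As $\ee\neq\ee_{\free}$, choose $i,j$ with $\ee_{ij}=1$; then $x_ix_j-x_jx_i$ vanishes in $C(S_{\RR,\ee}^{n-1})$ by the defining relations, while it is nonzero in $C(S_{\RR,+}^{n-1})$, as one sees by composing with the representation $\sigma_{\ee_{\free}}\circ\phi_{\ee_{\free}}$ of Lemma \ref{RepSphere}, under which $z_i$ and $z_j$ fail to commute. Symmetrically, as $\ee\neq\ee_{\comm}$ one picks $i,j$ with $\ee_{ij}=0$; then $x_ix_j-x_jx_i$ lies in the kernel of $C(S_{\RR,\ee}^{n-1})\to C(S_{\RR}^{n-1})$ but is nonzero in $C(S_{\RR,\ee}^{n-1})$, by the noncommutativity already established through $\sigma_\ee\circ\phi_\ee$ in the proof of Theorem \ref{VglSphaeren}.

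The argument is mostly routine; the only point that genuinely needs care is the passage from ``injective surjection'' to ``generator-preserving $*$-isomorphism'', so that Theorem \ref{VglSphaeren} becomes applicable — that is, one must ensure the contradiction is formulated with a map sending generators to generators, which both canonical quotient maps indeed do.
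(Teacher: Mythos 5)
Your proposal is correct and is essentially the argument the paper intends: the corollary is stated without proof precisely because the surjections come from the universal property and non-injectivity is immediate from Theorem \ref{VglSphaeren}, exactly as you argue (a bijective $*$-homomorphism between $C^*$-algebras is a $*$-isomorphism, and the canonical quotient maps send generators to generators). Your alternative with explicit kernel elements $x_ix_j-x_jx_i$ detected via $\sigma_{\ee}\circ\phi_{\ee}$ is also valid and is just the content of the theorem's proof unwound.
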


\begin{remark}
The study of noncommutative spheres has a long history and goes back to
Podle\'s \cite{QSphere}; see also the work of Connes with Dubois-Violette \cite{Con1} or with Landi \cite{Con2}, also collected in the survey \cite{Landi}; see also \cite{BanCox,BanSurv} for recent expositions about noncommutative spheres and latest references. 

The extensive work of Banica on noncommutative spheres is to be highlighted, see amongst others \cite{BanGos,BanCox,BanSph,BanPoly,BanSurv}. However, his relations on the coordinates $x_i$ are mostly chosen in a uniform way \cite[Def. 2.2]{BanSph}, \cite[Def. 1.7]{BanCox} rather than as partial relations; so our spheres appear to be new.
\end{remark}

\section{The $\ee$-orthogonal quantum group $O_n^\ee$}

The algebra of functions over the orthogonal group $O_n\subset M_n(\RR)$ can be viewed as the following universal $C^*$-algebra:
\[C(O_n)=C^*(u_{ij}, i,j=1,\ldots,n\;|\; u_{ij}=u_{ij}^*, u \textnormal{ is orthogonal}, R^{\textnormal{comm}})\]
Here:
\begin{align*}
u \textnormal{ is orthogonal} &\qquad \Longleftrightarrow\qquad &\sum_k u_{ik}u_{jk}=\sum_k u_{ki}u_{kj}=\delta_{ij}\\
R^{\textnormal{comm}}&\qquad \Longleftrightarrow\qquad & u_{ij}u_{kl}=u_{kl}u_{ij} \quad \forall i,j,k,l
\end{align*}

Wang \cite{WangOrth} defined a noncommutative analogue of it, the \emph{(free) orthogonal quantum group} $O_n^+$ given by:
\[C(O_n^+)=C^*(u_{ij}, i,j=1,\ldots,n\;|\; u_{ij}=u_{ij}^*, u \textnormal{ is orthogonal})\]
For an introduction to compact matrix quantum groups, we refer to the original articles by Woronowicz \cite{WoCMQG,WoRemark} or the books \cite{Nesh,Tim}. In the sequel, the tensor product of $C^*$-algebras is always with respect to the minimal tensor product.

\begin{definition}\label{DefEspOn}
We define the \emph{$\ee$-orthogonal quantum group} $O_n^\ee$ via the following universal $C^*$-algebra
\[C(O_n^\ee)=C^*(u_{ij}, i,j=1,\ldots,n\;|\; u_{ij}=u_{ij}^*, u \textnormal{ is orthogonal}, R^{\ee}),\]
where the relations $R^\ee$ are defined by:
\[u_{ik}u_{jl}=\begin{cases}
u_{jl}u_{ik}&\text{if  $\ee_{ij}=1$ and $\ee_{kl}=1$}\\
u_{jk}u_{il}&\text{if  $\ee_{ij}=1$ and $\ee_{kl}=0$}\\
u_{il}u_{jk}&\text{if  $\ee_{ij}=0$ and $\ee_{kl}=1$}
\end{cases}\]
\end{definition}

We refer to Proposition \ref{PropRefinedOn} for further relations which are implied by the above ones.

\begin{lemma}\label{LemOnQG}
The $\ee$-orthogonal quantum group $O_n^\ee$ is a quantum group indeed, i.e. the $C^*$-algebra $C(O_n^\ee)$ gives rise to a compact matrix quantum group in Woronowicz's sense.
\end{lemma}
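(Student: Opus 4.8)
The plan is to verify, for the pair $(C(O_n^\ee),u)$ with $u=(u_{ij})_{i,j=1}^n$ the matrix of generators, Woronowicz's three defining conditions of a compact matrix quantum group: that (1) the entries of $u$ generate $C(O_n^\ee)$ as a $C^*$-algebra, that (2) there is a unital $*$-homomorphism $\Delta\colon C(O_n^\ee)\to C(O_n^\ee)\otimes C(O_n^\ee)$ with $\Delta(u_{ij})=\sum_k u_{ik}\otimes u_{kj}$, and that (3) both $u$ and its transpose $u^t$ are invertible in $M_n(C(O_n^\ee))$. Condition (1) holds by construction, $C(O_n^\ee)$ being the universal $C^*$-algebra generated by the $u_{ij}$ (the universal object exists since orthogonality forces $\|u_{ij}\|\leq 1$). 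Condition (3) is immediate: orthogonality gives $uu^t=u^tu=1$, so $u^{-1}=u^t$ and $(u^t)^{-1}=u$, and self-adjointness of the entries makes $u$ real; by Woronowicz's theory this already yields the counit $\ee(u_{ij})=\delta_{ij}$ and antipode $S(u_{ij})=u_{ji}$, so only (2) requires work.

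For (2) I would invoke the universal property of $C(O_n^\ee)$. Put $w_{ij}:=\sum_k u_{ik}\otimes u_{kj}\in C(O_n^\ee)\otimes C(O_n^\ee)$; it then suffices to check that the matrix $w=(w_{ij})$ satisfies all the defining relations of $C(O_n^\ee)$, for then universality produces the unique $*$-homomorphism $\Delta$ with $\Delta(u_{ij})=w_{ij}$. Self-adjointness of each $w_{ij}$ is immediate from $(a\otimes b)^*=a^*\otimes b^*$ and self-adjointness of the $u_{ij}$. Orthogonality of $w$ follows by applying orthogonality of $u$ in each leg, for instance $\sum_j w_{ij}w_{lj}=\sum_{k,m}u_{ik}u_{lm}\otimes\big(\sum_j u_{kj}u_{mj}\big)=\sum_k u_{ik}u_{lk}\otimes 1=\delta_{il}$, and symmetrically $\sum_j w_{ji}w_{jl}=\delta_{il}$.

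The crux, and the main obstacle, is showing that $w$ satisfies the partial commutation relations $R^\ee$. Writing $w_{ik}w_{jl}=\sum_{a,b}u_{ia}u_{jb}\otimes u_{ak}u_{bl}$, I would apply $R^\ee$ in each tensor leg, noting that the first factor $u_{ia}u_{jb}$ is governed by $\ee_{ij}$ and $\ee_{ab}$, while the second factor $u_{ak}u_{bl}$ is governed by $\ee_{ab}$ and $\ee_{kl}$. The decisive structural observation is that the external parameters $\ee_{ij}$ and $\ee_{kl}$ are precisely the data controlling $w_{ik}w_{jl}$, whereas the internal parameter $\ee_{ab}$ ranges over the summation. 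Fixing one of the three cases of $(\ee_{ij},\ee_{kl})$ and splitting the double sum according to $\ee_{ab}=1$ or $\ee_{ab}=0$, the relations $R^\ee$ rewrite the $\ee_{ab}=1$ block into a \emph{swapped} form; relabelling the summation indices $a\leftrightarrow b$ (legitimate because $\ee$ is symmetric) turns it into the same summand as the $\ee_{ab}=0$ block, and the two blocks recombine into the full double sum defining the right-hand side of $R^\ee$. Concretely, when $\ee_{ij}=\ee_{kl}=1$ the $\ee_{ab}=1$ terms become $u_{jb}u_{ia}\otimes u_{bl}u_{ak}$ and the $\ee_{ab}=0$ terms become $u_{ja}u_{ib}\otimes u_{al}u_{bk}$, whereupon the swap $a\leftrightarrow b$ in the former and recombination yield exactly $w_{jl}w_{ik}$. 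The other two cases are handled in the same way, and the case $\ee_{ij}=\ee_{kl}=0$ imposes no relation, so there is nothing to verify there. This bookkeeping is the only genuine computation.

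Finally, coassociativity $(\Delta\otimes\id)\Delta=(\id\otimes\Delta)\Delta$ holds because both sides agree on the generators, each sending $u_{ij}$ to $\sum_{k,m}u_{ik}\otimes u_{km}\otimes u_{mj}$, and the $u_{ij}$ generate $C(O_n^\ee)$. Together with (1) and (3), the remaining compact-quantum-group structure follows from Woronowicz's theory, which completes the proof.
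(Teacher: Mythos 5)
Your proposal is correct and follows essentially the same route as the paper: the only substantive point is that $w_{ij}=\sum_k u_{ik}\otimes u_{kj}$ satisfies the defining relations of $C(O_n^\ee)$, and your treatment of $R^\ee$ — splitting the double sum over the internal index pair according to its $\ee$-value, applying $R^\ee$ in each tensor leg, and relabelling via the symmetry of $\ee$ so the two blocks recombine — is exactly the computation in the paper's proof (which carries out the swap in the $\ee_{ab}=0$ block rather than the $\ee_{ab}=1$ block, a cosmetic difference). The extra bookkeeping you include on the remaining Woronowicz axioms (generation, invertibility of $u$ and $u^t$, coassociativity) is standard and the paper simply takes it for granted.
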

\begin{proof}
According to Woronowicz's axioms, all we have to prove is that the map $\Delta:C(O_n^\ee)\to C(O_n^\ee)\otimes C(O_n^\ee)$ with  $u_{ij}\mapsto u_{ij}':=\sum_k u_{ik}\otimes u_{kj}$ is a $*$-homomorphism, i.e. that the elements $u_{ij}'\in C(O_n^\ee)\otimes C(O_n^\ee)$ satisfy the relations of the $u_{ij}\in C(O_n^\ee)$. Self-adjointness and orthogonality of $u'$ is easy to see, so it remains to show that the relations $R^\ee$ are fulfilled for the $u_{ij}'$.
Consider first $\ee_{ij}=1$ and $\ee_{kl}=1$. Then we have:
\begin{align*}
u_{ik}'u_{jl}'
&=\sum_{p,r:\, \ee_{pr}=1} u_{ir}u_{jp}\otimes u_{rk}u_{pl}+\sum_{p,r:\,
\ee_{pr}=0} u_{ir}u_{jp}\otimes u_{rk}u_{pl}\\
&=\sum_{p,r:\, \ee_{pr}=1} u_{jp}u_{ir}\otimes u_{pl} u_{rk}+\sum_{p,r:\,
\ee_{pr}=0} u_{jr}u_{ip}\otimes u_{rl}u_{pk}\\
&=\sum_{p,r} u_{jp}u_{ir}\otimes u_{pl}u_{rk}\\
&=u_{jl}'u_{ik}'
\end{align*}
Consider now $\ee_{ij}=1$ and $\ee_{kl}=0$. Then we have:
\begin{align*}
u_{ik}'u_{jl}'
&=\sum_{p,r:\, \ee_{pr}=1} u_{ir}u_{jp}\otimes u_{rk}u_{pl}+\sum_{p,r:\,
\ee_{pr}=0} u_{ir}u_{jp}\otimes u_{rk}u_{pl}\\
&=\sum_{p,r:\, \ee_{pr}=1} u_{jp}u_{ir}\otimes u_{pk} u_{rl}+\sum_{p,r:\,
\ee_{pr}=0} u_{jr}u_{ip}\otimes u_{rk}u_{pl}\\
&=\sum_{p,r} u_{jp}u_{ir}\otimes u_{pk}u_{rl}\\
&=u_{jk}'u_{il}'
\end{align*}
The case $\ee_{ij}=0$ and $\ee_{kl}=1$ is similar.
\end{proof}

Again, it is easy to see that $O_n$ and $O_n^+$ fit into the framework of $\ee$-orthogonal quantum groups, using the matrices $\ee_{\textnormal{comm}}$ and $\ee_{\textnormal{free}}$ respectively. However, let us point out that the commutativity relations $R^{\textnormal{comm}}$ do \emph{not} imply $R^\ee$ for general $\ee$, i.e. $O_n^\ee$ is \emph{no interpolation} between $O_n$ and $O_n^+$. We say that a compact matrix quantum group $G$ is a \emph{quantum subgroup} of $H$ (writing $G\subset H$), if there is a surjective $^*$-homomorphism from $C(H)$ to $C(G)$ mapping generators to generators.

\begin{proposition}\label{PropNoInterpolation}
If $\ee\neq \ee_{\comm}$ and $\ee\neq \ee_{\free}$, we have:
\[O_n\not\subset O_n^\ee\subsetneq O_n^+\]
More general, we have in that case:
\[S_n\not\subset O_n^\ee\subsetneq O_n^+\]
\end{proposition}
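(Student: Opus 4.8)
The plan is to isolate a single consequence of $R^\ee$ that fails both on classical orthogonal matrices and on permutation matrices, and then to read off all three assertions from it. Since $\ee\neq\ee_{\free}$ there are indices $i\neq j$ with $\ee_{ij}=1$, and since $\ee\neq\ee_{\comm}$ there are indices $k\neq l$ with $\ee_{kl}=0$; both pairs automatically have distinct entries because $\ee_{ii}=0$, and their existence forces $n\geq 2$. The mixed case of $R^\ee$ (the one with $\ee_{ij}=1$ and $\ee_{kl}=0$) then gives, in $C(O_n^\ee)$, the relation
\begin{equation*}
u_{ik}u_{jl}=u_{jk}u_{il}.\tag{$\star$}
\end{equation*}
This is the obstruction I will exploit: it is a ``swap'' relation which does \emph{not} follow from commutativity alone, since for commuting coordinates $u_{ik}u_{jl}-u_{jk}u_{il}$ is precisely the $2\times 2$ minor on rows $i,j$ and columns $k,l$. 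The inclusion $O_n^\ee\subset O_n^+$ itself is immediate: in $C(O_n^+)$ the generators are self-adjoint and orthogonal, so the universal property extends $u_{ij}\mapsto u_{ij}$ to a surjective $^*$-homomorphism $C(O_n^+)\to C(O_n^\ee)$.

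Next I would prove $S_n\not\subset O_n^\ee$. Assume for contradiction a generator-preserving surjection $\psi:C(O_n^\ee)\to C(S_n)$. Applying $\psi$ to $(\star)$ would force the magic-unitary generators of $C(S_n)$ to satisfy $u_{ik}u_{jl}=u_{jk}u_{il}$, that is $\delta_{i,\sigma(k)}\delta_{j,\sigma(l)}=\delta_{j,\sigma(k)}\delta_{i,\sigma(l)}$ for every $\sigma\in S_n$. Choosing $\sigma$ with $\sigma(k)=i$ and $\sigma(l)=j$ (possible because $i\neq j$ and $k\neq l$) makes the left side $1$ and the right side $0$, a contradiction; hence no such $\psi$ exists.

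The remaining claims then follow formally. Since $S_n\subset O_n$ via the restriction map $C(O_n)\to C(S_n)$, an inclusion $O_n\subset O_n^\ee$ would compose to yield $S_n\subset O_n^\ee$, which we have just excluded; thus $O_n\not\subset O_n^\ee$. For the properness $O_n^\ee\subsetneq O_n^+$, note that $u_{ik}u_{jl}-u_{jk}u_{il}$ vanishes in $C(O_n^\ee)$ by $(\star)$, while its image under $C(O_n^+)\to C(O_n)\to C(S_n)$ is nonzero by the evaluation above; therefore this element is nonzero in $C(O_n^+)$ and the surjection $C(O_n^+)\to C(O_n^\ee)$ cannot be injective.

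The computation is light, so the only delicate point is the bookkeeping of indices. I expect the main thing to check is that the hypotheses $\ee\neq\ee_{\comm}$ and $\ee\neq\ee_{\free}$ really do supply an ``off'' pair $(k,l)$ and an ``on'' pair $(i,j)$ with distinct entries, and that the possible coincidences among $i,j,k,l$ never collapse $(\star)$: as long as $i\neq j$ and $k\neq l$, the relation $(\star)$ stays a genuine $2\times2$ minor relation and the evaluations go through unchanged.
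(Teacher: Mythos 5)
Your proof is correct and follows essentially the same route as the paper: extract the swap relation $u_{ik}u_{jl}=u_{jk}u_{il}$ from the mixed case of $R^\ee$, evaluate at a permutation $\sigma$ with $\sigma(k)=i$ and $\sigma(l)=j$ to get the contradiction $1=0$, and deduce $O_n\not\subset O_n^\ee$ and the strictness of $O_n^\ee\subset O_n^+$ from $S_n\subset O_n\subset O_n^+$. The paper's proof is the same argument, only with the bookkeeping about coincidences among $i,j,k,l$ left implicit.
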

\begin{proof}
Since the matrix $u$ in $O_n^\ee$ is orthogonal and has self-adjoint entries, we have  $O_n^\ee\subset O_n^+$. The inclusion is strict, since $S_n\subset O_n\subset  O_n^+$ but $S_n\not\subset O_n^\ee$, which we will prove next. We may find $i\neq j$ such that $\ee_{ij}=1$ (since $\ee\neq \ee_{\free}$), and $k\neq l$ such that $\ee_{kl}=0$ (since $\ee\neq\ee_{\comm}$). Let $\sigma\in S_n$ be a permutation with the properties:
\[\sigma(k)=i,\qquad\sigma(l)=j\]
The associated permutation matrix $a^\sigma\in M_n(\CC)$ is defined by $a^\sigma_{pq}=\delta_{p\sigma(q)}$. The evaluation map $\ev_\sigma: C(S_n)\to\CC$ is given by $\ev_\sigma(u_{pq})=\delta_{p\sigma(q)}$. Now, assume $S_n\subset O_n^\ee$, i.e. there is a surjective $^*$-homomorphism $\phi:C(O_n^\ee)\to C(S_n)$ sending generators to generators. Composing it with $\ev_\sigma$ yields the following contradiction:
\[1=\delta_{i\sigma(k)}\delta_{j\sigma(l)}=\ev_\sigma\circ\phi(u_{ik}u_{jl})
=\ev_\sigma\circ\phi(u_{jk}u_{il})=\delta_{j\sigma(k)}\delta_{i\sigma(l)}=0\]
\end{proof}

Next, we will show that different matrices $\ee$ give rise to different $\ee$-orthogonal quantum groups.

\begin{lemma}\label{RepOnEps}
We have the following $^*$-homomorphism:
\[\phi_\ee:C(O_n^\ee)\to C^*(\mathbb Z_2^\ee),\qquad u_{ij}\mapsto \delta_{ij}z_i\]
\end{lemma}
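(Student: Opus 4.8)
The plan is to invoke the universal property of $C(O_n^\ee)$: since this $C^*$-algebra is presented by generators and relations, it suffices to produce elements $v_{ij}:=\delta_{ij}z_i\in C^*(\mathbb Z_2^\ee)$ satisfying all the defining relations of the $u_{ij}$ — self-adjointness, orthogonality of the matrix $(v_{ij})$, and the relations $R^\ee$ — whereupon $u_{ij}\mapsto v_{ij}$ extends uniquely to a $*$-homomorphism $\phi_\ee$. Self-adjointness is immediate since $z_i=z_i^*$ on the diagonal and $0=0^*$ off it. For orthogonality I would compute $\sum_k v_{ik}v_{jk}=\delta_{ij}z_iz_j$ and $\sum_k v_{ki}v_{kj}=\delta_{ij}z_i^2$, where the off-diagonal vanishing of $v$ collapses each sum to a single surviving term; both then reduce to $\delta_{ij}$ upon applying the group relation $z_i^2=1$.

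The substantive check is $R^\ee$, which I would carry out by splitting into the three prescribed cases. In the case $\ee_{ij}=1$ and $\ee_{kl}=1$, both $v_{ik}v_{jl}=\delta_{ik}\delta_{jl}\,z_iz_j$ and $v_{jl}v_{ik}=\delta_{ik}\delta_{jl}\,z_jz_i$ are supported only where $i=k$ and $j=l$, and there they coincide precisely because $\ee_{ij}=1$ forces $z_iz_j=z_jz_i$ in $C^*(\mathbb Z_2^\ee)$. In the remaining two cases ($\ee_{ij}=1,\ \ee_{kl}=0$ and $\ee_{ij}=0,\ \ee_{kl}=1$) I expect \emph{both} sides to vanish identically. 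Indeed, a nonzero left-hand side $v_{ik}v_{jl}$ forces $i=k$ and $j=l$, hence $\ee_{kl}=\ee_{ij}$, contradicting that these two entries are assumed opposite; and a nonzero right-hand side ($v_{jk}v_{il}$ respectively $v_{il}v_{jk}$) forces $i=l$ and $j=k$, so that by symmetry of $\ee$ we again get $\ee_{kl}=\ee_{ji}=\ee_{ij}$, the same contradiction. Thus both sides equal $0$ and the relation holds trivially.

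The only genuine content of the argument is this diagonal-support observation — that off the diagonal the generators map to zero, so a product $v_{ik}v_{jl}$ can be nonzero only when the index pattern is pinned down — which reduces every instance of $R^\ee$ either to the single commutation relation of the Coxeter group or to the trivial identity $0=0$. Consequently there is no real obstacle here; the verification is pure bookkeeping with Kronecker deltas together with the two defining relations $z_i^2=1$ and, when $\ee_{ij}=1$, $z_iz_j=z_jz_i$.
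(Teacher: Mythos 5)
Your proposal is correct and follows exactly the route the paper intends: the paper's own proof is the one-line remark that existence follows from the universal property, and your verification of self-adjointness, orthogonality, and the three cases of $R^\ee$ (with the diagonal-support observation reducing the mixed cases to $0=0$) is precisely the bookkeeping that remark leaves implicit.
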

\begin{proof}
The existence of $\pi$ is due to the universal property.
\end{proof}

\begin{remark}
The preceding lemma  is due to the fact that the diagonal subgroup of $O_n^\ee$ is the Coxeter group $\mathbb Z_2^\ee$. The diagonal subgroup of a compact matrix quantum group $(A,u)$ is constructed as follows. First, take the quotient of $A$ by the relations $u_{ij}=0$ for $i\neq j$. If $u$ is a unitary, so are all $u_{ii}$ in the quotient and we thus obtain the group $C^*$-algebra $C^*(G)$ of some group $G$. This group is called the diagonal subgroup of $(A,u)$.
\end{remark}

\begin{proposition}\label{OnEpsNoncommutative}
We have $O_n^\ee\neq O_n^{\ee'}$ for $\ee\neq\ee'$ in the sense that there is no $^*$-isomorphism $C(O_n^\ee)\to C(O_n^{\ee'})$ sending generators to generators.
Moreover, $C(O_n^\ee)$ is noncommutative as soon as $\ee\neq\ee_{\comm}$.
\end{proposition}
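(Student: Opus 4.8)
The plan is to transcribe the proof of Theorem~\ref{VglSphaeren} almost verbatim, now using the $^*$-homomorphism $\phi_\ee$ of Lemma~\ref{RepOnEps} (which lands on the \emph{diagonal} generators $u_{ii}\mapsto z_i$) followed by the Coxeter-group representation $\sigma_\ee$ of Lemma~\ref{RepSphere}(b). The composition $\sigma_\ee\circ\phi_\ee$ sends $u_{ii}$ to $\sigma_\ee(z_i)$, and by Lemma~\ref{RepSphere}(b) these operators commute if and only if $\ee_{ij}=1$, so the diagonal generators become a detector for the matrix $\ee$.

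First I would record how the diagonal generators $u_{ii}$ behave inside $C(O_n^\ee)$. Applying $R^\ee$ to the product $u_{ik}u_{jl}$ with $k=i$ and $l=j$ concerns $u_{ii}u_{jj}$, and here $\ee_{kl}=\ee_{ij}$. When $\ee_{ij}=1$ this falls into the first case of $R^\ee$ and yields $u_{ii}u_{jj}=u_{jj}u_{ii}$; when $\ee_{ij}=0$ none of the three cases of $R^\ee$ applies, so no commutation is imposed. Thus the diagonal generators inherit exactly the $\ee$-dependent commutation pattern of the $z_i$ in $\ZZ_2^\ee$.

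For the separation statement, suppose $\ee\neq\ee'$ and pick $i,j$ with $\ee_{ij}=1$ and $\ee'_{ij}=0$ (swapping the roles of $\ee$ and $\ee'$ if necessary). Assume there is a $^*$-homomorphism $\psi:C(O_n^\ee)\to C(O_n^{\ee'})$ sending generators to generators. Composing with $\sigma_{\ee'}\circ\phi_{\ee'}$ and using $\phi_{\ee'}(u_{ii})=z_i$, $\phi_{\ee'}(u_{jj})=z_j$, I obtain a representation of $C(O_n^\ee)$ in which the images of $u_{ii}$ and $u_{jj}$ fail to commute, since $\ee'_{ij}=0$ forces $\sigma_{\ee'}(z_i)$ and $\sigma_{\ee'}(z_j)$ to be noncommuting by Lemma~\ref{RepSphere}(b). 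But $u_{ii}$ and $u_{jj}$ \emph{do} commute in $C(O_n^\ee)$ because $\ee_{ij}=1$, which is the desired contradiction. Since a $^*$-isomorphism provides generator-preserving homomorphisms in both directions, this rules out any such isomorphism.

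For noncommutativity when $\ee\neq\ee_{\comm}$, I would choose $i\neq j$ with $\ee_{ij}=0$ and apply $\sigma_\ee\circ\phi_\ee$ directly: the images of $u_{ii}$ and $u_{jj}$ are $\sigma_\ee(z_i)$ and $\sigma_\ee(z_j)$, which do not commute, so $u_{ii}$ and $u_{jj}$ cannot commute in $C(O_n^\ee)$ and the algebra is noncommutative. The only point requiring genuine care is the bookkeeping in the second paragraph, namely confirming that the diagonal generators reproduce precisely the $\ee$-pattern of $R^\ee$; everything after that is a direct repetition of the sphere argument.
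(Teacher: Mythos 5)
Your proposal is correct and is exactly the argument the paper intends: its proof of this proposition is a one-line pointer to repeating Theorem \ref{VglSphaeren} with $\phi_\ee$ from Lemma \ref{RepOnEps} in place of the sphere representation, which is precisely what you carry out. The only detail the paper leaves implicit — that the first case of $R^\ee$ with $k=i$, $l=j$ forces $u_{ii}u_{jj}=u_{jj}u_{ii}$ whenever $\ee_{ij}=1$ — is the bookkeeping you correctly supply in your second paragraph.
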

\begin{proof}
The proof is similar to the one of Theorem \ref{VglSphaeren} using the maps $\phi_\ee$ of Lemma \ref{RepOnEps} rather than those of Lemma \ref{RepSphere}(c).
\end{proof}

Similarly to the well-known facts \cite{BanGos} that $O_n$ acts maximally on the commutative sphere $S_{\RR}^{n-1}$ and that $O_n^+$ acts maximally on the noncommutative sphere $S_{\RR,+}^{n-1}$, we observe that $O_n^\ee$ acts maximally on the $\ee$-sphere $S_{\RR,\ee}^{n-1}$. 

\begin{theorem}\label{Action}
The $\ee$-orthogonal quantum group $O_n^\ee$ acts on the $\ee$-sphere $S_{\RR,\ee}^{n-1}$ by the natural left and right actions
\[\alpha: C(S_{\RR,\ee}^{n-1})\to C(O_n^\ee)\otimes C(S_{\RR,\ee}^{n-1}),\qquad x_i\mapsto \sum_k  u_{ik}\otimes x_k\]
and:
\[\beta: C(S_{\RR,\ee}^{n-1})\to C(O_n^\ee)\otimes C(S_{\RR,\ee}^{n-1}),\qquad x_i\mapsto \sum_k  u_{ki}\otimes x_k\]
Moreover, $O_n^\ee$ is maximal with these actions in the sense that whenever $G$ is a compact matrix quantum group acting on $S_{\RR,\ee}^{n-1}$ in the above way, then $G\subset O_n^\ee$.
\end{theorem}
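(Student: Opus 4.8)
The plan is to treat the two assertions separately: first that $\alpha$ and $\beta$ really are actions, then that $O_n^\ee$ is maximal among quantum groups acting in this way. For the first part, by the universal property of $C(S_{\RR,\ee}^{n-1})$ it suffices to check that the elements $X_i:=\sum_k u_{ik}\otimes x_k$ satisfy the three defining relations of the $\ee$-sphere. Self-adjointness is immediate since the $u_{ik}$ and $x_k$ are self-adjoint. The sphere relation $\sum_i X_i^2=1\otimes 1$ follows from the column orthogonality $\sum_i u_{ik}u_{il}=\delta_{kl}$ together with $\sum_k x_k^2=1$. For the partial commutation, fix $\ee_{ij}=1$, expand $X_iX_j=\sum_{k,l}u_{ik}u_{jl}\otimes x_kx_l$, split the sum according to $\ee_{kl}=1$ or $\ee_{kl}=0$, apply the first two cases of $R^\ee$ and, on the $\ee_{kl}=1$ part, the sphere relation $x_kx_l=x_lx_k$; after relabelling $k\leftrightarrow l$ and using symmetry of $\ee$ one matches the expansion of $X_jX_i$. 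The right action $\beta$ is handled identically using row orthogonality and the transposed cases of $R^\ee$. Coassociativity $(\Delta\otimes\id)\alpha=(\id\otimes\alpha)\alpha$ is a direct computation, and the Podle\'s density follows from orthogonality, since $\sum_i u_{im}\,\alpha(x_i)=1\otimes x_m$ recovers $1\otimes C(S_{\RR,\ee}^{n-1})$.

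For maximality, let $G$ be a compact matrix quantum group with fundamental corepresentation $v=(v_{ij})$ acting on $S_{\RR,\ee}^{n-1}$ through both $\alpha_G(x_i)=\sum_k v_{ik}\otimes x_k$ and $\beta_G(x_i)=\sum_k v_{ki}\otimes x_k$. By the universal property of $C(O_n^\ee)$ (Definition \ref{DefEspOn}), producing the surjection $C(O_n^\ee)\to C(G)$, $u_{ij}\mapsto v_{ij}$, that witnesses $G\subset O_n^\ee$ amounts to showing that the $v_{ij}$ are self-adjoint, that $v$ is orthogonal, and that $R^\ee$ holds. The mechanism in every case is the same: push a defining relation of the sphere through $\alpha_G$ or $\beta_G$, expand the result in the monomials $x_kx_l$, and read off coefficients by applying slice maps $\id\otimes\eta$. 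Self-adjointness comes from $\alpha_G(x_i)=\alpha_G(x_i)^*$ and the linear independence of $x_1,\dots,x_n$; column orthogonality $v^tv=1$ comes from $\sum_i\alpha_G(x_i)^2=1$ and row orthogonality $vv^t=1$ from the same computation for $\beta_G$ (equivalently, self-adjointness turns $v^tv=1$ into $v^*v=1$ and invertibility of the fundamental corepresentation then forces unitarity).

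The relations $R^\ee$ are extracted from the partial commutations $x_ax_b=x_bx_a$ with $\ee_{ab}=1$. Writing $w_{kl}:=v_{ak}v_{bl}-v_{bk}v_{al}$, the identity $\sum_{k,l}w_{kl}\otimes x_kx_l=0$ forces, after expansion in a basis of degree-two monomials, that $w_{kl}=0$ whenever $\ee_{kl}=0$ and that $w_{kl}+w_{lk}=0$ whenever $\ee_{kl}=1$. The first conclusion is precisely the second case of $R^\ee$ (with $a=i$, $b=j$), while the second reads $[v_{ik},v_{jl}]+[v_{il},v_{jk}]=0$. Running the identical argument through $\beta_G$ gives the third case of $R^\ee$ and the companion identity $[v_{ik},v_{jl}]+[v_{jk},v_{il}]=0$. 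Since $[v_{jk},v_{il}]=-[v_{il},v_{jk}]$, adding the two companion identities yields $2[v_{ik},v_{jl}]=0$, i.e. the first case $v_{ik}v_{jl}=v_{jl}v_{ik}$ for $\ee_{ij}=\ee_{kl}=1$. This interplay of the left and right actions, splitting the weak commutator relations coming from each single action into the genuine commutations of $R^\ee$, is the conceptual crux of the argument; it is also the reason both actions must be invoked in the hypothesis.

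The main obstacle is the linear independence used to read off coefficients: one needs that, modulo the relations $x_kx_l=x_lx_k$ for $\ee_{kl}=1$ and $\sum_k x_k^2=1$, the degree-$\le 2$ monomials of $C(S_{\RR,\ee}^{n-1})$ are linearly independent, so that dual functionals $\eta$ exist. Here the noncommutativity of the sphere is used in an essential way: the symmetric and diagonal parts are separated by the commutative quotient $C(S_{\RR}^{n-1})$, whereas the pairs $x_kx_l\neq x_lx_k$ with $\ee_{kl}=0$ are separated by the two-dimensional representations of Lemma \ref{RepSphere}, on whose summand $H_{kl}$ the generators $x_k,x_l$ act through the noncommuting matrices $a$ and $b$. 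Assembling these into a single representation provides the functionals needed to complete every extraction above.
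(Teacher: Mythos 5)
Your proof is correct and follows the same skeleton as the paper's: check the three sphere relations for existence, then for maximality push the defining relations of $C(S_{\RR,\ee}^{n-1})$ through $\alpha$ and $\beta$, read off coefficients of the degree-two monomials, and combine the two anticommutator identities coming from the left and right actions to upgrade them to the genuine commutation $v_{ik}v_{jl}=v_{jl}v_{ik}$ in the case $\ee_{ij}=\ee_{kl}=1$ (this last step is verbatim the paper's Step 4). The one genuine difference is the device used to extract coefficients. The paper never analyses the linear structure of $C(S_{\RR,\ee}^{n-1})$: it only uses its universal property to build $^*$-homomorphisms $\eta_k$, $\sigma_{kl}$ (characters) and $\tau_{kl}$ (a collapse onto the two-generator free sphere $C(S_{\RR,+}^{1})$), and then compares coefficients in the much smaller algebra $C(S_{\RR,+}^{1})$. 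You instead prove that the degree-$\le 2$ monomials of $C(S_{\RR,\ee}^{n-1})$ are linearly independent modulo the commutations and the sphere relation, and then slice with dual functionals; your justification of this independence (commutative quotient for the symmetric and diagonal parts, the summands $H_{kl}$ of the representation in Lemma \ref{RepSphere}(b) for the antisymmetric parts with $\ee_{kl}=0$) is valid — on $H_{kl}$ only the coefficient of $ab$ carries new information, and that is exactly $c_{kl}-c_{lk}$, which together with $c_{kl}+c_{lk}=0$ from the commutative quotient kills both coefficients. What your route buys is a single clean linear-independence lemma in place of several ad hoc auxiliary maps; what the paper's route buys is that it needs no structural knowledge of $C(S_{\RR,\ee}^{n-1})$ beyond its universal property (and no appeal to slice maps on the minimal tensor product). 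Both ultimately rest on the same noncommutativity witness, Lemma \ref{RepSphere}.
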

\begin{proof}

\emph{Step 1: Existence of $\alpha$ and $\beta$.}

We put $y_i:=\sum_k u_{ik}\otimes x_k$ and compute for  $\ee_{ij}=1$:
\begin{align*}
y_iy_j
&=\sum_{k,l:\ee_{kl}=1} u_{ik}u_{jl}\otimes x_kx_l+\sum_{k,l:\ee_{kl}=0} u_{ik}u_{jl}\otimes x_kx_l\\
&=\sum_{k,l:\ee_{kl}=1} u_{jl}u_{ik}\otimes x_lx_k+\sum_{k,l:\ee_{kl}=0} u_{jk}u_{il}\otimes x_kx_l\\
&=\sum_{k,l} u_{jl}u_{ik}\otimes x_lx_k\\
&=y_jy_i
\end{align*}
Furthermore, $y_i^*=y_i$ and $\sum_i y_i^2=1$ by an easy computation  using only the relations of $O_n^+$. Thus, $\alpha$ exists by the universal property. Likewise we deduce the existence of $\beta$.

\emph{Step 2: Maximality; definition of auxiliary maps.}

Now, let $G$ be another compact matrix quantum group acting on $S_{\RR,\ee}^{n-1}$ via:
\begin{align*}
&\alpha',\beta': C(S_{\RR,\ee}^{n-1})\to C(G)\otimes C(S_{\RR,\ee}^{n-1})\\
&\alpha'(x_i)= \sum_k  u_{ik}\otimes x_k, \qquad\beta'(x_i)= \sum_k  u_{ki}\otimes x_k
\end{align*}
For proving that there is a $^*$-homomorphism $C(O_n^\ee)\to C(G)$ sending generators to generators, we will make use of the following $^*$-homomorphisms. They arise from tensor products of the identity map $\id:C(G)\to C(G)$ with $^*$-homomorphisms from  $C(S_{\RR,\ee}^{n-1})$ to $\CC$ or to $C(S_{\RR,+}^1)$ respectively; we use the universal property of $C(S_{\RR,\ee}^{n-1})$ for the existence of the latter ones. We have:
\[\eta_k:C(G)\otimes C(S_{\RR,\ee}^{n-1})\to C(G), 
\qquad z\otimes x_i\mapsto \begin{cases} z &\textnormal{ if }i=k\\ 0 &\textnormal{ otherwise}\end{cases}\]
Moreover,  we have for $\ee_{kl}=1$:
\[\sigma_{kl}:C(G)\otimes C(S_{\RR,\ee}^{n-1})\to C(G), 
\qquad z\otimes x_i\mapsto \begin{cases} \frac{1}{\sqrt 2}z &\textnormal{ if }i=k\textnormal{ or }i=l\\
0 &\textnormal{ otherwise}\end{cases}\]
And for $\ee_{kl}=0$ with $k<l$:
\[\tau_{kl}:C(G)\otimes C(S_{\RR,\ee}^{n-1})\to C(G)\otimes C(S_{\RR,+}^1), 
\quad z\otimes x_i\mapsto \begin{cases} 
z\otimes x_1 &\textnormal{ if }i=k\\
z\otimes x_2 &\textnormal{ if }i=l\\
0 &\textnormal{ otherwise}\end{cases}\]

\emph{Step 3: Maximality; $u_{ij}=u_{ij}^*$ holds in $C(G)$.}

We observe that all generators of $C(G)$ are self-adjoint, by applying $\eta_k$ to the following equation:
\[\sum_k  u_{ik}\otimes x_k=\alpha'(x_i)=\alpha'(x_i)^*=\sum_k  u_{ik}^*\otimes x_k\]

\emph{Step 4: Maximality; the relations $R^\ee$ hold in $C(G)$.}

Let us compute:
\begin{align*}
\alpha'(x_ix_j)&=\sum_{k,l:\ee_{kl}=1} u_{ik}u_{jl}\otimes x_kx_l+\sum_{k,l:\ee_{kl}=0} u_{ik}u_{jl}\otimes x_kx_l\\
\alpha'(x_jx_i)&=\sum_{k,l:\ee_{kl}=1} u_{jk}u_{il}\otimes x_kx_l+\sum_{k,l:\ee_{kl}=0} u_{jk}u_{il}\otimes x_kx_l\\
\beta'(x_kx_l)&=\sum_{i,j:\ee_{ij}=1} u_{ik}u_{jl}\otimes x_ix_j+\sum_{i,j:\ee_{ij}=0} u_{ik}u_{jl}\otimes x_ix_j\\
\beta'(x_lx_k)&=\sum_{i,j:\ee_{ij}=1} u_{il}u_{jk}\otimes x_ix_j+\sum_{i,j:\ee_{ij}=0} u_{il}u_{jk}\otimes x_ix_j
\end{align*}
If $\ee_{ij}=1$, the terms $\alpha'(x_ix_j)$ and $\alpha'(x_jx_i)$ coincide. We apply $\tau_{kl}$ for $k<l$ and $\ee_{kl}=0$ to the equation $\alpha'(x_ix_j)=\alpha'(x_jx_i)$ and we obtain (where now $x_1,x_2\in C(S^1_{\RR,+})$):
\begin{align*}
&u_{ik}u_{jl}\otimes x_1x_2+u_{il}u_{jk}\otimes x_2x_1+u_{ik}u_{jk}\otimes x_1^2+u_{il}u_{jl}\otimes x_2^2\\
=&u_{jk}u_{il}\otimes x_1x_2+u_{jl}u_{ik}\otimes x_2x_1+u_{jk}u_{ik}\otimes x_1^2+u_{jl}u_{il}\otimes x_2^2
\end{align*}
By applying the maps $\eta_1$ and $\eta_2$, we obtain $u_{ik}u_{jk}=u_{jk}u_{ik}$ and $u_{il}u_{jl}=u_{jl}u_{il}$. By Lemma \ref{RepSphere}(a) we know $x_1x_2\neq x_2x_1$, so we finally obtain the following relations (including the case $k=l$):
\[\ee_{ij}=1,\ee_{kl}=0: \qquad\qquad u_{ik}u_{jl}=u_{jk}u_{il}\]
A similar argument using $\beta'$ yields:
\[\ee_{ij}=0,\ee_{kl}=1: \qquad\qquad u_{ik}u_{jl}=u_{il}u_{jk}\]
For $\ee_{kl}=1$, we have $x_kx_l=x_lx_k$, hence applying $\sigma_{kl}$ to the equation $\alpha'(x_ix_j)=\alpha'(x_jx_i)$ yields the relations:
\[\ee_{ij}=1,\ee_{kl}=1: \qquad\qquad u_{ik}u_{jl}+u_{il}u_{jk}=u_{jk}u_{il}+u_{jl}u_{ik}\]
Applying it on $\beta'(x_kx_l)=\beta'(x_lx_k)$ yields the relations:
\[\ee_{ij}=1,\ee_{kl}=1: \qquad\qquad u_{ik}u_{jl}+u_{jk}u_{il}= u_{il}u_{jk}+ u_{jl}u_{ik}\]
Combining these two relations, we obtain:
\[\ee_{ij}=1,\ee_{kl}=1: \qquad\qquad u_{ik}u_{jl}= u_{jl}u_{ik}\]

\emph{Step 5: Maximality; $u$ is orthogonal in $C(G)$.}

Using the relation $\sum_k x_k^2=1$ in $C(S_{\RR,\ee}^{n-1})$, we infer:
\begin{align*}
1\otimes 1
&=\sum_k\alpha'(x_k^2)\\
&=\sum_{kij} u_{ki}u_{kj}\otimes x_ix_j\\
&=\sum_{ij:i\neq j}\left(\sum_k u_{ki}u_{kj}\right)\otimes x_ix_j+
\sum_{i}\left(\sum_k u_{ki}^2\right)\otimes x_i^2
\end{align*}
Applying $\eta_i$ to this equation, we obtain
\[ \sum_k u_{ki}^2=1\qquad\forall i\]
and therefore:
\[\sum_{ij:i\neq j}\left(\sum_k u_{ki}u_{kj}\right)\otimes x_ix_j=0\]
If now $\ee_{ij}=0$, applying $\tau_{ij}$ and using $x_1x_2\neq x_2x_1$ in $C(S_{\RR,+}^1)$ yields:
\[\sum_k u_{ki}u_{kj}=0\]
If $\ee_{ij}=1$, then $x_ix_j=x_jx_i$ and using $\sigma_{ij}$ we deduce:
\[\sum_k u_{ki}u_{kj}+\sum_k u_{kj}u_{ki}=0\]
But as $u_{ki}u_{kj}=u_{kj}u_{ki}$, we infer:
\[\sum_k u_{ki}u_{kj}=0\]
Performing similar computations for $\beta'$, this proves orthogonality of $u$ and we may conclude that there is a $^*$-homomorphism $C(G)\to C(O_n^\ee)$ sending generators to generators; hence $G\subset O_n^\ee$.
\end{proof}

\section{The $\ee$-symmetric quantum group $S_n^\ee$}

Having defined an $\ee$-version of the orthogonal group $O_n$ by quotienting out the relations $R^\ee$ from $O_n^+$, it is natural to define $\ee$-versions of quantum  subgroups of $O_n^+$ in the same way. In order to do so for the symmetric (quantum) group, we observe that several natural relations are equivalent, as will be discussed in the sequel.

\subsection{The relations $\mathring R^\ee$}

Recall the relations $R^\ee$ from Definition \ref{DefEspOn}:
\[u_{ik}u_{jl}=\begin{cases}
u_{jl}u_{ik}&\text{if  $\ee_{ij}=1$ and $\ee_{kl}=1$}\\
u_{jk}u_{il}&\text{if  $\ee_{ij}=1$ and $\ee_{kl}=0$}\\
u_{il}u_{jk}&\text{if  $\ee_{ij}=0$ and $\ee_{kl}=1$}
\end{cases}\]

We now define some simpler relations.

\begin{definition}\label{DefRRing}
We define the relations $\mathring R^\ee$ by:
\[u_{ik}u_{jl}=\begin{cases}
u_{jl}u_{ik}&\text{if  $\ee_{ij}=1$ and $\ee_{kl}=1$}\\
0&\text{if  $\ee_{ij}=1$ and $\ee_{kl}=0$}\\
0&\text{if  $\ee_{ij}=0$ and $\ee_{kl}=1$}\end{cases}\]
In fact, we may also express them as:
\begin{align*}
&(\mathring R^\ee1) \quad u_{ik}u_{jl}=u_{jl}u_{ik}\textnormal{ if } \ee_{ij}=1 \textnormal{ and } \ee_{kl}=1\\
&(\mathring R^\ee2) \quad \delta_{\ee_{kl}=0}u_{ik}u_{jl}=\delta_{\ee_{ij}=0}u_{ik}u_{jl}
\end{align*}
\end{definition}

\begin{lemma}\label{LemREpsAeqRRing}
For any quantum subgroup $G_n\subset O_n^+$, we have:
\begin{itemize}
\item[(a)] The relations $\mathring R^\ee$ imply the relations $R^\ee$.
\item[(b)] If $u_{ik}u_{jk}=u_{ki}u_{kj}=0$ for all $i\neq j$ and all $k$, then the relations $R^\ee$ imply the relations $\mathring R^\ee$. 
\end{itemize}
\end{lemma}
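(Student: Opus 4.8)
The plan is to prove the two implications by quite different means: part (a) is a purely formal consequence of the symmetry of $\ee$, whereas part (b) is where the $C^*$-structure really enters.

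For (a) I would check the three clauses of $R^\ee$ separately. The clause for $\ee_{ij}=\ee_{kl}=1$ is literally the first clause of $\mathring R^\ee$, so there is nothing to do. For $\ee_{ij}=1,\ee_{kl}=0$ the relation $\mathring R^\ee$ already gives $u_{ik}u_{jl}=0$, and it remains only to see that the right-hand side $u_{jk}u_{il}$ of $R^\ee$ vanishes as well; but $u_{jk}u_{il}$ has row-pair $(j,i)$ with $\ee_{ji}=\ee_{ij}=1$ and column-pair $(k,l)$ with $\ee_{kl}=0$, so it is itself an instance of the middle clause of $\mathring R^\ee$ and hence equals $0$. Thus the $R^\ee$-relation reads $0=0$. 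The clause $\ee_{ij}=0,\ee_{kl}=1$ is handled symmetrically, exchanging rows and columns and using $\ee_{lk}=\ee_{kl}$. No assumption beyond $G_n\subset O_n^+$ is used here.

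For (b) the first clause again coincides in $R^\ee$ and $\mathring R^\ee$, so the task is to derive the two vanishing relations from $R^\ee$ together with the hypothesis $u_{ik}u_{jk}=u_{ki}u_{kj}=0$ for $i\neq j$. Consider the clause $\ee_{ij}=1,\ee_{kl}=0$ (note $\ee_{ij}=1$ forces $i\neq j$). Starting from the $R^\ee$-relation $u_{ik}u_{jl}=u_{jk}u_{il}$ and left-multiplying by $u_{ik}$, the right-hand side becomes $(u_{ik}u_{jk})u_{il}=0$, because $u_{ik}$ and $u_{jk}$ sit in the same column with $i\neq j$; hence $u_{ik}^2u_{jl}=0$. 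For the clause $\ee_{ij}=0,\ee_{kl}=1$ (now $\ee_{kl}=1$ forces $k\neq l$) one instead starts from $u_{ik}u_{jl}=u_{il}u_{jk}$ and again left-multiplies by $u_{ik}$, using $u_{ik}u_{il}=0$ (same row, $k\neq l$) to reach $u_{ik}^2u_{jl}=0$ as well. Since the generators are self-adjoint in $O_n^+$, I would then finish both cases identically by the positivity computation $(u_{ik}u_{jl})^*(u_{ik}u_{jl})=u_{jl}\,u_{ik}^2u_{jl}=0$, whence $u_{ik}u_{jl}=0$.

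The main obstacle is precisely this last step: passing from the cubic identity $u_{ik}^2u_{jl}=0$ to the quadratic identity $u_{ik}u_{jl}=0$. One cannot cancel $u_{ik}$ algebraically, and it is only the $C^*$-identity $\|a\|^2=\|a^*a\|$ (together with self-adjointness of the generators) that makes the conclusion go through; a purely $*$-algebraic analogue of the statement would fail. I would also stress that the hypothesis on same-row and same-column products is genuinely necessary: for $\ee=\ee_{\comm}$ the relation $R^\ee$ does \emph{not} force $u_{ik}u_{jk}=0$, so $R^\ee$ alone cannot imply $\mathring R^\ee$, which explains why the extra assumption appears in (b) but not in (a).
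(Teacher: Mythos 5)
Your proof is correct. Part (a) is the same argument the paper compresses into the single remark that $\ee$ is symmetric: under $\mathring R^\ee$ both sides of each nontrivial clause of $R^\ee$ vanish, so the clause reads $0=0$. For part (b) you take a genuinely different, and somewhat leaner, route. The paper first uses the orthogonality relation $\sum_j u_{jk}^2=1$ together with the hypothesis to show $u_{ik}^2=u_{ik}^4$, so that the generators are self-adjoint partial isometries; it then multiplies $u_{ik}u_{jl}=u_{jk}u_{il}$ on the right by the projection $u_{jl}^2$, the left side surviving because $u_{jl}^3=u_{jl}$ and the right side dying because $u_{il}u_{jl}=0$. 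You instead multiply on the left by $u_{ik}$, which kills the right-hand side immediately from the hypothesis, and then upgrade $u_{ik}^2u_{jl}=0$ to $u_{ik}u_{jl}=0$ via $(u_{ik}u_{jl})^*(u_{ik}u_{jl})=u_{jl}u_{ik}^2u_{jl}=0$. Your version never invokes the orthogonality of $u$ and isolates more sharply where the $C^*$-structure enters (only in the final positivity step), whereas the paper packages the same $C^*$-input into the statement that the generators are partial isometries; both uses are essential, as you note. Your closing remarks --- that a purely $*$-algebraic analogue would fail at the cancellation step, and that $\ee=\ee_{\comm}$ shows the hypothesis in (b) cannot be dropped --- are accurate and consistent with the paper's later observation that $\mathring O_n^{\ee}=H_n\subsetneq O_n=O_n^{\ee}$ for $\ee=\ee_{\comm}$.
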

\begin{proof}
(a) This follows since $\ee$ is symmetric.

(b) The relations $u_{ik}u_{jk}=0$ imply that the elements $u_{ik}^2$ are projections (and thus, the $u_{ik}$ are partial isometries). Indeed, we have $\sum_ju_{jk}^2=1$ by the orthogonality relations, thus:
\[u_{ik}^2=u_{ik}^2\sum_j u_{jk}^2=u_{ik}^4+\sum_{i\neq j}u_{ik}^2u_{jk}^2=u_{ik}^4\]
Recall that for $\ee_{ij}=1$ and $\ee_{kl}=0$, the relations $R^\ee$  imply:
\[u_{ik}u_{jl}=u_{jk}u_{il}\]
Multiplying this equation from the right with $u_{jl}^2$, we infer
\[u_{ik}u_{jl}=0,\]
since the projections $u_{jl}^2$ and $u_{il}^2$ are orthogonal to each other. The case $\ee_{ij}=0$ and $\ee_{kl}=1$ is similar.
\end{proof}

\begin{definition}\label{DefGEps}
For any quantum subgroup $G\subset O_n^+$ we define $G^\ee$ and $\mathring G^\ee$ by:
\[C(G^\ee):=C(G)/ \langle R^\ee\rangle,\qquad C(\mathring G^\ee):=C(G)/ \langle \mathring R^\ee\rangle\]
\end{definition}

For $\ee=\ee_{\free}$, we have $G^\ee=G$.

\begin{lemma}\label{LemRZwei}
Let $G$ be a compact matrix quantum group with fundamental unitary $u=(u_{ij})_{1\leq i,j\leq n}$.
\begin{itemize}
\item[(a)] If the relations ($\mathring R^\ee2$) hold for $u_{ij}$ in $C(G)$, then they also hold for $u_{ij}':=\sum_k u_{ik}\otimes u_{kj}\in C(G)\otimes C(G)$.
\item[(b)] If $G\subset O_n^+$ is a quantum subgroup of $O_n^+$, then also $G^\ee$ and $\mathring G^\ee$ are compact matrix quantum groups and we have $\mathring G^\ee\subset G^\ee\subset G$.
\end{itemize}
\end{lemma}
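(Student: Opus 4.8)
The plan is to prove both parts by checking the single Woronowicz axiom that is not automatic, namely that the comultiplication $\Delta(u_{ij})=\sum_k u_{ik}\otimes u_{kj}$ descends to the quotient $C^*$-algebras, and then to read off the inclusions directly from the definitions together with Lemma \ref{LemREpsAeqRRing}(a). For part (a) I would simply compute
\[u_{ik}'u_{jl}'=\sum_{p,r}u_{ip}u_{jr}\otimes u_{pk}u_{rl}\]
and apply $(\mathring R^\ee2)$ to each tensor leg separately. The factor $u_{ip}u_{jr}$ in the first leg vanishes unless $\ee_{pr}=\ee_{ij}$, while the factor $u_{pk}u_{rl}$ in the second leg vanishes unless $\ee_{pr}=\ee_{kl}$. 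When exactly one of $\ee_{ij}$ and $\ee_{kl}$ equals $1$---which is precisely the situation in which $(\mathring R^\ee2)$ demands $u_{ik}'u_{jl}'=0$---these two constraints on the intermediate pair $(p,r)$ are contradictory, so every summand is zero. This bookkeeping, which forces $\ee_{pr}$ to equal both $\ee_{ij}$ and $\ee_{kl}$ at once, is the crux of the argument and the main (though mild) obstacle; the remaining cases of $(\mathring R^\ee2)$ reduce to the trivial identities $0=0$ and $u_{ik}'u_{jl}'=u_{ik}'u_{jl}'$.

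For part (b) the first task is to see that $\Delta$ descends to $C(G^\ee)=C(G)/\langle R^\ee\rangle$, for which it suffices to check that the images $u_{ij}'$ of $\Delta(u_{ij})$ satisfy the relations $R^\ee$ in $C(G^\ee)\otimes C(G^\ee)$. This is exactly the computation carried out in Lemma \ref{LemOnQG}, which uses nothing but the relations $R^\ee$ themselves (valid in $C(G^\ee)$ by construction) and therefore goes through verbatim for any quantum subgroup $G\subset O_n^+$. Turning to $\mathring G^\ee$, I would split $\mathring R^\ee$ into its two parts: $(\mathring R^\ee1)$ is literally the first line of $R^\ee$ and hence already verified for $u'$ by the previous step, whereas $(\mathring R^\ee2)$ for $u'$ is precisely the content of part (a). Thus $u'$ satisfies all of $\mathring R^\ee$, and $\Delta$ descends to $C(\mathring G^\ee)$ as well.

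It then remains to note that self-adjointness, orthogonality, and invertibility of $u'$ are inherited from the compact matrix quantum group structure already present on $G$ and are preserved under passing to quotients, so the full list of Woronowicz's axioms holds for both $G^\ee$ and $\mathring G^\ee$. Finally, the inclusions are formal: by Lemma \ref{LemREpsAeqRRing}(a) the relations $\mathring R^\ee$ imply $R^\ee$, so $C(\mathring G^\ee)$ is a quotient of $C(G^\ee)$, giving $\mathring G^\ee\subset G^\ee$; and $C(G^\ee)$ is by definition a quotient of $C(G)$, giving $G^\ee\subset G$. I expect no genuine difficulty beyond the index bookkeeping of part (a).
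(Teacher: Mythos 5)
Your proposal is correct and follows essentially the same route as the paper: in part (a) you apply $(\mathring R^\ee2)$ to each tensor leg of $u_{ik}'u_{jl}'=\sum_{p,r}u_{ip}u_{jr}\otimes u_{pk}u_{rl}$ to force $\ee_{pr}$ to agree with both $\ee_{ij}$ and $\ee_{kl}$, which is exactly the paper's chain of delta-identities phrased as a vanishing argument, and in part (b) you invoke the computation of Lemma \ref{LemOnQG} together with Lemma \ref{LemREpsAeqRRing}(a) just as the paper does. No gaps.
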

\begin{proof}
(a) We compute:
\begin{align*}
\delta_{\ee_{kl}=0}u_{ik}'u_{jl}'
&=\sum_{pq} u_{ip}u_{jq}\otimes\delta_{\ee_{kl}=0}u_{pk}u_{ql}\\
&=\sum_{pq} u_{ip}u_{jq}\otimes\delta_{\ee_{pq}=0}u_{pk}u_{ql}\\
&=\sum_{pq} \delta_{\ee_{ij}=0}u_{ip}u_{jq}\otimes u_{pk}u_{ql}\\
&=\delta_{\ee_{ij}=0}u_{ik}'u_{jl}'
\end{align*}

(b) In Lemma \ref{LemOnQG} we proved that the relations $R^\ee$ pass from $u_{ij}$ to $u_{ij}':=\sum_k u_{ik}\otimes u_{kj}$. Thus, $G^\ee$ is a compact matrix quantum group. As for $\mathring G^\ee$, we use (a) and Lemma \ref{LemREpsAeqRRing}.
\end{proof}

By the same argument as in Proposition \ref{PropNoInterpolation} we see that whenever $\ee\neq \ee_{\comm}$ and $\ee\neq \ee_{\free}$, we have:
\[S_n\not\subset G^\ee\subset O_n^+\]
This is particularly interesting, since the concept of easy quantum groups, as developed by Banica and Speicher \cite{BS}, provides a powerful approach for defining and studying quantum subgroups $G\subset O_n^+$, see also \cite{RW}. However, they come with the restriction  $S_n\subset G\subset O_n^+$. Thus, the $\ee$-versions of easy quantum groups are a further step in the 
direction of understanding all quantum subgroups of $O_n^+$. 

\subsection{Definition of $S_n^\ee$}

For $S_n^+$ the quotient by $R^\ee$ coincides with the one by $\mathring R^\ee$, by Lemma \ref{LemREpsAeqRRing}. Hence, we define the $\ee$-symmetric group $S_n^\ee$ as follows.

\begin{definition}
The \emph{$\ee$-symmetric group} $S_n^\ee$ is given by the quotient of $S_n^+$ by the relations $\mathring R^\ee$, i.e.:
\[C(S_n^\ee)= C^*(u_{ij}\;|\; u_{ij}=u_{ij}^*=u_{ij}^2,\sum_ku_{ik}=\sum_ku_{kj}=1\;\forall i,j\textnormal{ and } \mathring R^\ee)\]
\end{definition}

Viewing  $\ee\in M_n(\{0,1\})$ as the adjacency matrix of an undirected graph $\Gamma_\ee$, we observe that our definition of $S_n^\ee$ coincides with the one of a quantum automorphism group of $\Gamma_\ee$ given by Bichon \cite{BicQAG, BicQAGZwei}, see Proposition \ref{PropVglBB}. In this sense, we may justify the definition $S_n^\ee$ intrinsicly, i.e. as the quantum symmetry of some quantum space; exactly like we motivated our definition of $O_n^\ee$ as the quantum symmetry of the $\ee$-sphere. There is another definition of a quantum automorphism group of a graph given by Banica \cite{BanQAG}. We denote it by $S_n^{\Gamma_\ee}$ in order to keep the notations used in this article  consistent.

\begin{definition}[{\cite{BanQAG}}]\label{DefAut}
Given an undirected graph $\Gamma_\ee$ with adjacency matrix $\ee\in M_n(\{0,1\})$, its \emph{quantum automorphism group} $S_n^{\Gamma_\ee}$ is defined via:
\[C(S_n^{\Gamma_\ee}):=  C^*(u_{ij}\;|\; u_{ij}=u_{ij}^*=u_{ij}^2,\sum_ku_{ik}=\sum_ku_{kj}=1\;\forall i,j\textnormal{ and } u\ee =\ee u)\]
More explicitely, $u\ee=\ee u$ may be expressed as:
\[\sum_k \delta_{\ee_{kl}=1}u_{ik}=\sum_j \delta_{\ee_{ij}=1}u_{jl}\]
\end{definition}

\begin{lemma}\label{LemRelSnEps}
Let $G\subset S_n^+$ be a quantum subgroup of $S_n^+$. The following relations are equivalent:
\begin{itemize}
\item[(i)] The partial relations of $R^\ee$: $u_{ik}u_{jl}=u_{jk}u_{il}$ and $u_{ki}u_{lj}=u_{kj}u_{li}$ if  $\ee_{ij}=1$ and $\ee_{kl}=0$.
\item[(ii)] The partial relations $(\mathring R^\ee2)$ of $\mathring R^\ee$: $\delta_{\ee_{kl}=0}u_{ik}u_{jl}=\delta_{\ee_{ij}=0}u_{ik}u_{jl}$.
\item[(iii)] The relations $u\ee=\ee u$:
$\sum_k \delta_{\ee_{kl}=1}u_{ik}=\sum_j \delta_{\ee_{ij}=1}u_{jl}$.
\end{itemize}
\end{lemma}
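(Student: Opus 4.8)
The plan is to use, throughout, that since $G\subset S_n^+$ the matrix $u$ is a magic unitary: every $u_{ij}$ is a projection, and in each row and each column the entries are mutually orthogonal and sum to $1$, i.e. $u_{ik}u_{ik'}=\delta_{kk'}u_{ik}$, $u_{ik}u_{jk}=\delta_{ij}u_{ik}$, and $\sum_k u_{ik}=\sum_k u_{kj}=1$. These orthogonality relations are the workhorse. I would first record that the partial relation $(\mathring R^\ee2)$ in (ii) is simply the statement that $u_{ik}u_{jl}=0$ whenever $\ee_{ij}\neq\ee_{kl}$, the two nontrivial cases being $(\ee_{ij},\ee_{kl})=(1,0)$ and $(0,1)$. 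The whole statement is then established through the two bi-implications (i)$\Leftrightarrow$(ii) and (ii)$\Leftrightarrow$(iii).

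For (i)$\Leftrightarrow$(ii): the direction (ii)$\Rightarrow$(i) is immediate, because for $\ee_{ij}=1,\ee_{kl}=0$ every one of the four products $u_{ik}u_{jl}$, $u_{jk}u_{il}$, $u_{ki}u_{lj}$, $u_{kj}u_{li}$ has a row pair and a column pair with mismatched $\ee$-values and hence vanishes by (ii); so both relations of (i) read $0=0$. The converse (i)$\Rightarrow$(ii) is exactly the argument of Lemma \ref{LemREpsAeqRRing}(b): multiplying $u_{ik}u_{jl}=u_{jk}u_{il}$ on the right by the projection $u_{jl}$ and using $u_{il}u_{jl}=0$ (column orthogonality, valid since $\ee_{ij}=1$ forces $i\neq j$) yields $u_{ik}u_{jl}=0$, and the case $\ee_{ij}=0,\ee_{kl}=1$ follows symmetrically from the second relation of (i).

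The substantive part is (ii)$\Leftrightarrow$(iii). For (iii)$\Rightarrow$(ii), fix $i,j,k,l$ with $\ee_{ij}=1$ and $\ee_{kl}=0$ and start from the identity $\sum_{k':\ee_{k'l}=1}u_{ik'}=\sum_{j':\ee_{ij'}=1}u_{j'l}$. Multiplying on the left by $u_{ik}$ annihilates the left-hand sum, since $\ee_{kl}=0$ means $k$ does not occur there and row-orthogonality kills it; multiplying the resulting equation $u_{ik}\sum_{j':\ee_{ij'}=1}u_{j'l}=0$ on the right by $u_{jl}$ and using column-orthogonality together with $\ee_{ij}=1$ (so $j$ does occur) isolates the single surviving term $u_{ik}u_{jl}=0$. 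The case $\ee_{ij}=0,\ee_{kl}=1$ is the mirror image, multiplying first on the right to kill the right-hand sum and then on the left.

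The direction I expect to be the main obstacle is (ii)$\Rightarrow$(iii), which reconstructs the global commutation relation $u\ee=\ee u$ from the purely local vanishing relations. Here I would set $A_{il}:=\sum_{k:\ee_{kl}=1}u_{ik}$, the $(i,l)$-entry of $u\ee$, and $B_{il}:=\sum_{j:\ee_{ij}=1}u_{jl}$, that of $\ee u$, and prove $A_{il}=A_{il}B_{il}=B_{il}$. Inserting $1=\sum_k u_{ik}$ on the left of $B_{il}$ and using (ii) to force $\ee_{kl}=1$ in each surviving term gives $B_{il}=A_{il}B_{il}$; inserting $1=\sum_j u_{jl}$ on the right of $A_{il}$ and using (ii) to force $\ee_{ij}=1$ gives $A_{il}=A_{il}B_{il}$. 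Comparing the two yields $A_{il}=B_{il}$, which is precisely (iii). This $A=AB=B$ device is the crux of the whole equivalence, and once it is in place each step reduces to a short orthogonality computation.
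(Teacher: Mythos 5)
Your proof is correct and follows essentially the same route as the paper's: the equivalence (i)$\Leftrightarrow$(ii) via the projection/orthogonality trick of Lemma \ref{LemREpsAeqRRing}, and (ii)$\Rightarrow$(iii) via exactly the $A=AB=B$ computation $\sum_k \delta_{\ee_{kl}=1}u_{ik}=\sum_{k,j}\delta_{\ee_{kl}=1}\delta_{\ee_{ij}=1}u_{ik}u_{jl}=\sum_{j}\delta_{\ee_{ij}=1}u_{jl}$ that the paper writes out. Your (iii)$\Rightarrow$(ii) step sandwiches the relation directly between $u_{ik}$ and $u_{jl}$ instead of passing through the complementary sums $\sum_{k'}\delta_{\ee_{k'l}=0}u_{ik'}=\sum_{j'}\delta_{\ee_{ij'}=0}u_{j'l}$ and the triple products the paper uses, but this is the same magic-unitary orthogonality mechanism, only organized a little more directly.
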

\begin{proof}
The equivalence of (i) and (ii) follows from the proof of Lemma \ref{LemREpsAeqRRing}. We now prove that (ii) implies (iii). From (ii) we infer:
\[\sum_{k,j}\delta_{\ee_{kl}=1}\delta_{\ee_{ij}=0}u_{ik}u_{jl}=0 \quad\textnormal{and}\quad
\sum_{k,j}\delta_{\ee_{kl}=0}\delta_{\ee_{ij}=1}u_{ik}u_{jl}=0\]
Thus, using $\sum_j u_{jl}=\sum_k u_{ik}=1$, we have:
\[\sum_k \delta_{\ee_{kl}=1}u_{ik}
=\sum_{k,j}\delta_{\ee_{kl}=1}\delta_{\ee_{ij}=1}u_{ik}u_{jl}
=\sum_{j}\delta_{\ee_{ij}=1}u_{jl}\]
Conversely, assume that (iii) holds. We thus have:
\[\sum_{k'} \delta_{\ee_{k'l}=0}u_{ik'}=1-\sum_{k'} \delta_{\ee_{k'l}=1}u_{ik'}
=1- \sum_{j'} \delta_{\ee_{ij'}=1}u_{j'l}=\sum_{j'} \delta_{\ee_{ij'}=0}u_{j'l}\]
Furthermore, $\ee_{kl}=0$ and $\ee_{k'l}=1$ implies $k\neq k'$ and hence $u_{ik}u_{ik'}=0$.
Likewise we see $\delta_{\ee_{ij'}=0}\delta_{\ee_{ij}=1}u_{j'l}u_{jl}=0$. Therefore:
\begin{align*}
\delta_{\ee_{kl}=0}u_{ik}u_{jl}
&=\sum_{k'}\delta_{\ee_{kl}=0}\delta_{\ee_{k'l}=0}u_{ik}u_{ik'}u_{jl}
+\sum_{k'}\delta_{\ee_{kl}=0}\delta_{\ee_{k'l}=1}u_{ik}u_{ik'}u_{jl}\\
&=\sum_{k'}\delta_{\ee_{kl}=0}\delta_{\ee_{k'l}=0}u_{ik}u_{ik'}u_{jl}\\
&=\sum_{j'} \delta_{\ee_{kl}=0}\delta_{\ee_{ij'}=0}u_{ik}u_{j'l}u_{jl}\\
&=\sum_{j'} \delta_{\ee_{kl}=0}\delta_{\ee_{ij'}=0}\delta_{\ee_{ij}=0}u_{ik}u_{j'l}u_{jl}
\end{align*}
On the other hand:
\begin{align*}
\delta_{\ee_{ij}=0}u_{ik}u_{jl}
&=\sum_{j'}\delta_{\ee_{ij}=0}\delta_{\ee_{ij'}=0}u_{ik}u_{j'l}u_{jl}\\
&=\sum_{k'}\delta_{\ee_{ij}=0}\delta_{\ee_{k'l}=0}u_{ik}u_{ik'}u_{jl}\\
&=\sum_{k'}\delta_{\ee_{ij}=0}\delta_{\ee_{k'l}=0}\delta_{\ee_{kl}=0}u_{ik}u_{ik'}u_{jl}
\end{align*}
This proves that (ii) holds. 
\end{proof}

The previous lemma and the next proposition comparing the two different definitions of quantum automorphism groups of \cite{BicQAG} and \cite{BanQAG} may also be found in \cite[Sect. 3.1]{QAGThesis}.

\begin{proposition}\label{PropVglBB}
The $\ee$-symmetric quantum group $S_n^\ee$ coincides with the quantum automorphism group of $\Gamma_\ee$ as defined by Bichon, and it is a quantum subgroup of the quantum automorphism group $S_n^{\Gamma_\ee}$ as defined by Banica.
\end{proposition}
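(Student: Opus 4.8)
The plan is to establish the two assertions separately, with Lemma~\ref{LemRelSnEps} carrying the essential weight for the comparison with Banica's group $S_n^{\Gamma_\ee}$.

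First I would identify $S_n^\ee$ with Bichon's construction by a direct comparison of defining relations. Recalling Bichon's definition of the quantum automorphism group of the graph $\Gamma_\ee$ and writing $i\sim j$ for $\ee_{ij}=1$, his relations beyond the magic unitary conditions are the commutation $u_{ik}u_{jl}=u_{jl}u_{ik}$ whenever $i\sim j$ and $k\sim l$, together with $u_{ik}u_{jl}=0$ whenever exactly one of the pairs $\{i,j\}$ and $\{k,l\}$ forms an edge. Comparing with Definition~\ref{DefRRing}, the first is precisely $(\mathring R^\ee1)$ and the two vanishing conditions are precisely the content of $(\mathring R^\ee2)$. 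Hence Bichon's relations coincide verbatim with $\mathring R^\ee$, the two universal $C^*$-algebras agree, and $S_n^\ee$ equals Bichon's quantum automorphism group of $\Gamma_\ee$. Should Bichon's relations be phrased in a different but equivalent form, the equivalences of Lemma~\ref{LemRelSnEps} reduce the matching to the one above.

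For the inclusion $S_n^\ee\subset S_n^{\Gamma_\ee}$ I would use the universal property. As $C(S_n^\ee)$ is a quotient of $C(S_n^+)$ we have $S_n^\ee\subset S_n^+$, and by construction the relations $(\mathring R^\ee2)$ hold among its generators. Applying the equivalence (ii)$\Leftrightarrow$(iii) of Lemma~\ref{LemRelSnEps} to $G=S_n^\ee$ then shows that Banica's relation $u\ee=\ee u$ also holds in $C(S_n^\ee)$. Thus every defining relation of $C(S_n^{\Gamma_\ee})$---the magic unitary conditions and $u\ee=\ee u$---is satisfied by the generators of $C(S_n^\ee)$, so the universal property yields a surjective $^*$-homomorphism $C(S_n^{\Gamma_\ee})\to C(S_n^\ee)$ sending generators to generators; by definition this is the asserted inclusion $S_n^\ee\subset S_n^{\Gamma_\ee}$.

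I do not expect a genuine obstacle here, since Lemma~\ref{LemRelSnEps} already contains the only nontrivial point, namely that the single matrix identity $u\ee=\ee u$ is equivalent, in the magic unitary setting, to the vanishing relations $(\mathring R^\ee2)$. The conceptual distinction to keep in view is simply that Bichon's group imposes the additional commutation $(\mathring R^\ee1)$ that Banica's does not, which is exactly why $S_n^\ee$ lies inside $S_n^{\Gamma_\ee}$; the only routine care required is the faithful transcription of Bichon's definition into the index notation of $\mathring R^\ee$.
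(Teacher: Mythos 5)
Your proposal is correct and follows essentially the same route as the paper: a direct matching of Bichon's relations with $(\mathring R^\ee1)$ and $(\mathring R^\ee2)$, and then the equivalence (ii)$\Leftrightarrow$(iii) of Lemma~\ref{LemRelSnEps} to get $u\ee=\ee u$ and hence $S_n^\ee\subset S_n^{\Gamma_\ee}$. The only detail the paper adds is that Bichon's definition contains a third family of relations, namely $\sum_{i,j:\,\ee_{ij}=1}u_{ik}u_{jl}=1$ for $\ee_{kl}=1$, which the paper checks explicitly is implied by the other two via $\bigl(\sum_i u_{ik}\bigr)\bigl(\sum_j u_{jl}\bigr)=1$; your hedge about ``a different but equivalent form'' covers this, but the one-line verification is worth writing out.
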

\begin{proof}
The relations (3.2) of Theorem 3.2 in \cite{BicQAG} are equivalent to ($\mathring R^\ee2$) and his relations (3.3) are equivalent to ($\mathring R^\ee1$). His relations (3.4) follow from (3.2) and (3.3) using for $\ee_{kl}=1$:
\[\sum_{ij}\delta_{\ee_{ij}=1}u_{ik}u_{jl}=
\sum_{ij}u_{ik}u_{jl}
=\left(\sum_{i}u_{ik}\right)\left(\sum_ju_{jl}\right)=1\]
 The assertion $S_n^\ee\subset S_n^{\Gamma_\ee}$ follows from the previous lemma. 
\end{proof}

\subsection{Noncommutativity of $C(S^\ee_n)$}

Observe that the $C^*$-algebras \linebreak$C(G^\ee)$ may collapse to something very small and they might be commutative. This depends on the particular choice of the matrix $\ee$ as may be seen in the next two examples of  $S_n^\ee$. 
Note that when Banica, Bichon and others investigate graphs which have no quantum symmetry \cite{BanBicQAG, BBGNoSym,  BanSurvey}, this is exactly the same question: They ask whether or not $C(S_n^{\Gamma_\ee})$ is commutative. Such investigations and concrete examples may also be found in \cite[Thm. 5.6.1 and Thm. 6.4.1]{QAGThesis}.

Recall from Section \ref{SectEpsSphere} that we may view the full group $C^*$-algebra associated to $\mathbb Z_2*\mathbb Z_2$ as a universal $C^*$-algebra. We now give a well-known alternative presentation.

\begin{lemma}\label{LemPQ}
The following universal $C^*$-algebras are isomorphic and noncommutative.
\begin{itemize}
\item[(a)] $C^*(\mathbb Z_2 *\mathbb Z_2)=C^*(z_1,z_2,1\;|\; z_i=z_i^*, z_i^2=1, i=1,2)$
\item[(b)] $C^*(p,q,1\;|\; p=p^*=p^2,q=q^*=q^2)$
\end{itemize}
\end{lemma}
\begin{proof}
The isomorphism between (a) and (b) is given by:
\[z_1\mapsto 2p-1,\qquad z_2\mapsto 2q-1,\qquad 1\mapsto 1\]
Noncommutativity follows from Lemma \ref{RepSphere}(a).
\end{proof}

The following example has also been treated by Bichon in \cite[Prop. 3.3]{BicQAG}.

\begin{example}\label{ExKey}
Let $\ee\in M_4(\{0,1\})$ be as in Example \ref{ExEps}(d) given by:
\[\ee=\begin{pmatrix}0&1&0&0\\1&0&0&0\\0&0&0&1\\0&0&1&0\end{pmatrix}\]
The $C^*$-algebra $C(S_4^\ee)$  is noncommutative and hence $S_4^\ee$ is a quantum group which is not a group. 
\end{example}
\begin{proof}
The following matrix (using Lemma \ref{LemPQ}) in $M_4(C^*(\mathbb Z_2*\mathbb Z_2))$ gives rise to a representation of $C(S_4^\ee)$ as may be verified directly:
\[\begin{pmatrix}p&1-p&0&0\\1-p&p&0&0\\0&0&q&1-q\\0&0&1-q&q\end{pmatrix}\]
We thus have a surjection of $C(S_4^\ee)$  onto $C^*(\mathbb Z_2*\mathbb Z_2)$ which proves that $C(S_4^\ee)$ is noncommutative.
\end{proof}

\begin{example}\label{ExConverse}
Let $\ee\in M_4(\{0,1\})$  be as in Example \ref{ExEps}(e) given by:
\[\ee=\begin{pmatrix}0&0&1&1\\0&0&1&1\\1&1&0&0\\1&1&0&0\end{pmatrix}\]
The $C^*$-algebra $C(S_4^\ee)$  is commutative, and hence $S_4^\ee$ is a group with $S_4^\ee\subset S_4$. The group is computed explicitly in Example \ref{ExTn}(b).
\end{example}
\begin{proof}
Let $i,k,j,l\in\{1,2,3,4\}$. We will show that $u_{ik}$ and $u_{jl}$ commute.

\emph{Case 1: $\ee_{ij}=1$ and $\ee_{kl}=1$.} Then $u_{ik}$ and $u_{jl}$ commute due to the relations $R^\ee$.

\emph{Case 2: $\ee_{ij}=1$ and $\ee_{kl}=0$.} By the definition of the matrix $\ee$, there are two indices $p\neq q$ such that $\ee_{kp}=\ee_{kq}=1$ and by Case 1, we know that $u_{ik}$ commutes with $u_{jp}$ as well as with $u_{jq}$. Moreover, $u_{ik}$ commutes with $u_{jk}$ since their product is zero if $i\neq j$ as the projections in each row and each column are orthogonal to each other. Now, since $\ee_{kk}=0$, we have $p\neq k$ and $q\neq k$ showing  that $u_{ik}$ commutes with three entries of the $j$-th row. As the fourth entry may be expressed as a linear combination of 1 and the other three entries (recall that we have $\sum_m u_{jm}=1$), we infer that $u_{ik}$ commutes with all entries of the $j$-th row, in particular with $u_{jl}$.

\emph{Case 3: $\ee_{ij}=0$ and $\ee_{kl}=1$.} The argument in Case 2 is symmetric.

\emph{Case 4: $\ee_{ij}=0$ and $\ee_{kl}=0$.} Again we use the fact that there are two indices $p\neq q$ such that $\ee_{kp}=\ee_{kq}=1$, which by Case 3 yields that  $u_{ik}$ commutes with $u_{jp}$ as well as with $u_{jq}$. Now,  $u_{ik}$ commutes with $u_{jk}$ for any $i$ and $j$ and we conclude as above that $u_{ik}$ and $u_{jl}$ commute.

We conclude that $C(S_4^\ee)$  is commutative, and hence $S_4^\ee\subset S_4$.
\end{proof}

The above two examples show that it depends on the choice of $\ee$ whether $C(S_4^\ee)$ is commutative or not. However, recall from Proposition \ref{OnEpsNoncommutative} that $C(O_4^\ee)$ is noncommutative in both examples.

\section{The commutative version of $S_n^\ee$: the group $T_n^\ee$}

Given the fact that $S_n^\ee$ may be a group in certain cases, it might be interesting to determine it. We now associate a subgroup of $S_n$ to any $S_n^\ee$, regardless whether $S_n^\ee$ is a group or not. It is the commutative version of $S_n^\ee$.

\begin{definition}\label{DefTn}
For a given $\ee\in M_n(\{0,1\})$ we define the following subgroup of $S_n$:
\[T_n^\ee:=\{\sigma\in S_n\;|\; \sigma\ee\sigma^{-1}=\ee\}\subset S_n\]
\end{definition}

It is nothing but the automorphism group of the graph $\Gamma_\ee$, since any permutation $\sigma$ with $\sigma\ee\sigma^{-1}=\ee$ is a bijection between the vertices of the graph such that $i$ and $j$ form an edge of the graph if and only if $\sigma(i)$ and $\sigma(j)$ do. Thus, if a graph has no quantum symmetries in the sense of \cite{BanBicQAG}, then $S_n^{\Gamma_\ee}=S_n^\ee=T_n^\ee$.

It turns out that while easy quantum groups are quantum subgroups of $O_n^+$ with the restriction of containing $S_n$, their $\ee$-versions come with the restriction of containing $T_n^\ee$. 

\begin{proposition}
Viewed as a quantum group, $T_n^\ee$ arises from the quotient $C(S_n)/\langle R^\ee\rangle$, or, in other words, as the quotient of $C(S_n^\ee)$ by the commutativity of all generators $u_{ij}$.
Hence, for all quantum groups $S_n\subset G\subset O_n^+$ we have:
\[T_n^\ee\subset G^\ee\subset O_n^+\]
If $S_n^\ee$ is a group, then $S_n^\ee=T_n^\ee$.
\end{proposition}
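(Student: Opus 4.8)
The plan is to prove the three assertions in turn, starting from the defining relations. The claimed statement has three parts: first, that the group $T_n^\ee$ is recovered as the quotient $C(S_n)/\langle R^\ee\rangle$, equivalently as $C(S_n^\ee)$ with all generators made to commute; second, the sandwich $T_n^\ee \subset G^\ee \subset O_n^+$ for every easy-type $G$ with $S_n\subset G\subset O_n^+$; and third, that $S_n^\ee=T_n^\ee$ whenever $S_n^\ee$ happens to be a group.

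First I would identify $T_n^\ee$ inside $C(S_n)/\langle R^\ee\rangle$. Since $C(S_n)$ is commutative, it is the algebra of functions on the permutation matrices, and its characters are exactly the evaluations $\ev_\sigma(u_{ij})=\delta_{i\sigma(j)}$ for $\sigma\in S_n$. The key point is to check which $\ev_\sigma$ survive the quotient by $R^\ee$, i.e.\ for which $\sigma$ the relations $R^\ee$ hold under $\ev_\sigma$. Because everything commutes already, the content of $R^\ee$ reduces to the two ``off-diagonal'' cases: for $\ee_{ij}=1,\ \ee_{kl}=0$ the relation becomes $\delta_{i\sigma(k)}\delta_{j\sigma(l)}=\delta_{j\sigma(k)}\delta_{i\sigma(l)}$, and symmetrically for the other case. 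I would show these hold for all $i,j,k,l$ precisely when $\sigma$ preserves the edge-relation, i.e.\ $\ee_{\sigma(k)\sigma(l)}=\ee_{kl}$ for all $k,l$, which is exactly $\sigma\ee\sigma^{-1}=\ee$. Concretely, if $\sigma\ee\sigma^{-1}=\ee$ then both sides of the evaluated relation are nonzero only when $i=\sigma(k)$ and $j=\sigma(l)$, in which case $\ee_{ij}=\ee_{\sigma(k)\sigma(l)}=\ee_{kl}$, contradicting $\ee_{ij}=1,\ \ee_{kl}=0$ unless both sides vanish; conversely a $\sigma$ violating edge-preservation produces a character not killing $R^\ee$. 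Hence the characters of $C(S_n)/\langle R^\ee\rangle$ are exactly $\{\ev_\sigma : \sigma\in T_n^\ee\}$, identifying the quotient with $C(T_n^\ee)$.

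Next, the inclusions. The rightmost $G^\ee\subset O_n^+$ is immediate, since $G\subset O_n^+$ gives a surjection $C(O_n^+)\to C(G)\to C(G^\ee)$ sending generators to generators. For $T_n^\ee\subset G^\ee$, I would use that $S_n\subset G$ yields a surjection $C(G)\to C(S_n)$ on generators; passing to the quotients by $\langle R^\ee\rangle$, this descends to a surjection $C(G^\ee)\to C(S_n)/\langle R^\ee\rangle=C(T_n^\ee)$ sending generators to generators (the relations $R^\ee$ are imposed on both sides, so the map is well defined). This exhibits $T_n^\ee$ as a quantum subgroup of $G^\ee$. I would double-check that making all generators of $C(S_n^\ee)$ commute yields the same algebra as $C(S_n)/\langle R^\ee\rangle$: abelianizing $C(S_n^+)$ gives $C(S_n)$, and since $\mathring R^\ee$ and $R^\ee$ become equivalent once everything commutes (indeed under commutativity the vanishing relations of $\mathring R^\ee$ are forced by $R^\ee$ together with the magic-unitary orthogonality used in Lemma \ref{LemREpsAeqRRing}), both routes land on $C(T_n^\ee)$.

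For the final clause, suppose $S_n^\ee$ is a group, i.e.\ $C(S_n^\ee)$ is commutative. Then abelianizing changes nothing, so $C(S_n^\ee)=C(S_n^\ee)/\langle\text{comm}\rangle=C(T_n^\ee)$ by the first part, giving $S_n^\ee=T_n^\ee$. The main obstacle I anticipate is the bookkeeping in the first part: carefully verifying that the surviving characters are \emph{exactly} those $\sigma$ with $\sigma\ee\sigma^{-1}=\ee$, handling both off-diagonal cases of $R^\ee$ and confirming that no further relations (e.g.\ the $\ee_{ij}=\ee_{kl}=1$ commuting case, which is automatic in the commutative setting) impose additional constraints. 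The inclusions and the last clause are then formal consequences once this identification is nailed down.
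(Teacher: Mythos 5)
Your proposal is correct and follows essentially the same route as the paper: identify the characters of the commutative quotient $C(S_n)/\langle R^\ee\rangle$ as exactly the evaluations $\ev_\sigma$ at edge-preserving permutations $\sigma$ (i.e.\ $\sigma\ee\sigma^{-1}=\ee$), then obtain the inclusions and the final clause formally. Your extra care about the equivalence of $R^\ee$ and $\mathring R^\ee$ after abelianization is a point the paper leaves implicit, but it is the same argument.
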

\begin{proof}
Since the $C^*$-algebra $C(S_n)/\langle R^\ee\rangle$  carries a compact matrix quantum group structure and since it is commutative, it is isomorphic to $C(H)$, where $H$ is some subgroup of $S_n$. It is given by all permutation matrices $a^\sigma\in S_n$ satisfying the relations $R^\ee$, i.e. we have:
\begin{align*}
\textnormal{For  }\ee_{ij}=1\textnormal{ and }\ee_{kl}=0:&\quad
&\delta_{i\sigma(k)}\delta_{j\sigma(l)}
=a^\sigma_{ik}a^\sigma_{jl}
=a^\sigma_{jk}a^\sigma_{il}
=\delta_{j\sigma(k)}\delta_{i\sigma(l)}\\
\textnormal{For  }\ee_{ij}=0\textnormal{ and }\ee_{kl}=1:&\quad
&\delta_{i\sigma(k)}\delta_{j\sigma(l)}
=a^\sigma_{ik}a^\sigma_{jl}
=a^\sigma_{il}a^\sigma_{jk}
=\delta_{i\sigma(l)}\delta_{j\sigma(k)}
\end{align*}
Let $\sigma\in H$ be a permutation and let $k\neq l$. Put $i:=\sigma(k)$ and $j:=\sigma(l)$ and assume $\ee_{kl}=1$. Then $\ee_{ij}=1$ since we would have a contradiction otherwise resulting from the relations $R^\ee$. Likewise, $\ee_{kl}=0$ implies $\ee_{ij}=0$. We deduce that $H$ consists exactly of all permutations $\sigma\in S_n$ such that:
\[\ee_{\sigma(k)\sigma(l)}=1\qquad\Longleftrightarrow\qquad\ee_{kl}=1\]
Writing $\sigma\in S_n$ as the permutation matrix $a^\sigma\in M_n(\CC)$ with $a^\sigma_{pq}=\delta_{p\sigma(q)}$, we see that the $(k,l)$-th entry of $(a^\sigma)^{-1}\ee a^\sigma$ is exactly $\ee_{\sigma(k)\sigma(l)}$ which proves $H=T_n^\ee$.

Finally, the natural quotient map from $C(S_n^+)/\langle R^\ee\rangle$ to  $C(S_n)/\langle R^\ee\rangle$ is an isomorphism, if  $C(S_n^+)/\langle R^\ee\rangle$ is commutative. Thus $S_n^\ee=T_n^\ee$, if $C(S_n^\ee)$ is commutative.
\end{proof}

\begin{example}\label{ExTn}
We now study $T_n^\ee$ in  certain examples.
\begin{itemize}
\item[(a)] We have $T_n^\ee=S_n$ if and only if $\ee=\ee_{\comm}$ or $\ee=\ee_{\free}$. 
Indeed, by definition, $T_n^\ee$ consists of all possibilities to permute rows and columns with the same permutation, such that $\ee$ does not change. Now, if there are $i,j,k,l$ such that $\ee_{ij}=1$ and $\ee_{kl}=0$ with $k\neq l$, then the permutation $\sigma\in S_n$ with $\sigma(i)=k$ and $\sigma(j)=l$ is not contained in $T_n^\ee$. We conclude that $T_n^\ee$ is large in the extreme cases (maximally commutative and maximally noncommutative situations) and smaller otherwise.
\item[(b)] In Examples \ref{ExEps}(d) and \ref{ExKey} as well as in  Examples \ref{ExEps}(e) and \ref{ExConverse}, the group $T_4^\ee$ is given by the subgroup of $S_4$ generated by the transpositions $(1,2)$ and $(3,4)$ and the cyclic permutation $(1,2,3,4)$. It has eight elements. 

Check that permuting the first column to the $k$-th column implies a unique condition for where the second row is permuted to. We are then left with two possible choices for the permutation of the other indices, thus we have four times two possibilities in total.
\item[(c)] The following matrix has trivial group $T_6^\ee$:
\[\ee=\begin{pmatrix}
0&0&1&0&0&0\\
0&0&0&0&1&0\\
1&0&0&0&0&1\\
0&0&0&0&1&1\\
0&1&0&1&0&1\\
0&0&1&1&1&0
\end{pmatrix}\]
Indeed, observe that the number of units in a column gives the restriction that we may only permute the first and the second column, or the third and the fourth, and finally the fifth and the sixth. But permuting the first column to the second position, we would need to permute the third row to the fifth, which is not allowed by the above mentioned restriction. Continuing this argument, we infer that $T_6^\ee$ consists only of the neutral element.
\end{itemize}
\end{example}

\section{Intertwiners for a de Finetti theorem}\label{SectIntertwinersSn}

By Woronowicz's Tannaka-Krein result \cite{WoTK}, any compact matrix quantum group is completely determined by its intertwiners.  See for instance \cite{TW} for an introduction to intertwiners and Tannaka-Krein theory close to our setting. For the easy quantum groups of Banica and Speicher, the intertwiner space is spanned by maps which are indexed by partitions. Let us recall how we associate linear maps to partitions $\pi$ in $P(k,l)$ having $k$ upper points and $l$ lower points, see \cite{BS} for details. For multi indices $i=(i(1),\ldots,i(k))$ and $j=(j(1),\ldots,j(l))$ with entries from $\{1,\ldots,n\}$ we denote by $\ker(i,j)$ the partition in $P(k,l)$ obtained from connecting two points if and only if the entries of the multi index $(i,j)$ coincide. This definition is an extension of the definition of $\ker i$.

\begin{definition}[{\cite{BS}}]\label{DefTpi}
Let $n\in\NN$.  To a partition $\pi\in P(k,l)$ with $k,l\in\NN_0$, we associate the linear map:
\begin{align*}
&T_\pi:(\CC^n)^{\otimes k}\to (\CC^n)^{\otimes l}\\
&e_{i(1)}\otimes\ldots\otimes e_{i(k)}\mapsto\sum_{j(1),\ldots,j(l)}\delta_{\pi\leq\ker(i,j)}e_{j(1)}\otimes\ldots\otimes e_{j(l)}
\end{align*}
Here, $(\CC^n)^{\otimes 0}=\CC$, by convention. For $\pi\in P(l)=P(0,l)$, we have:
\begin{align*}
&T_\pi(1)=\sum_{j(1),\ldots,j(l)}\delta_{\pi\leq\ker(j)}e_{j(1)}\otimes\ldots\otimes e_{j(l)}\\
&(T_\pi)^*(e_{j(1)}\otimes\ldots\otimes e_{j(l)})=\delta_{\pi\leq\ker(j)}
\end{align*}
\end{definition}

In our situation, we need a further linear  map in order to describe our intertwiners.

\begin{definition}\label{DefRCross}
For $n\in\NN$ we define $R_{\crosspart}^1:(\CC^n)^{\otimes 2}\to(\CC^n)^{\otimes 2}$ by:
\[R^1_{\crosspart}(e_i\otimes e_j):=\delta_{{\ee_{ij}=1}}e_j\otimes e_i\]
\end{definition}

\begin{lemma}\label{LemRRingEpsIntertwiner}
Let $G\subset O_n^+$ be a compact matrix quantum group. 
The map $R^1_{\crosspart}$ is an intertwiner of $G$ if and only if the relations $\mathring R^\ee$ hold in $G$.
\end{lemma}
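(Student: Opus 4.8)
The plan is to prove the equivalence by recalling the standard Tannaka--Krein characterization of intertwiners and then translating the intertwiner condition for $R^1_{\crosspart}$ into the relations $\mathring R^\ee$ on the generators $u_{ij}$. Recall that for a compact matrix quantum group $G\subset O_n^+$ with fundamental corepresentation $u=(u_{ij})$, a linear map $S\colon(\CC^n)^{\otimes k}\to(\CC^n)^{\otimes l}$ is an intertwiner precisely when $u^{\otimes l}\, S = S\, u^{\otimes k}$, where $u^{\otimes m}$ denotes the $m$-th tensor power corepresentation with matrix coefficients $(u^{\otimes m})_{(i_1\ldots i_m),(j_1\ldots j_m)}=u_{i_1j_1}\cdots u_{i_mj_m}$. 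Here $k=l=2$, so I would write out both sides of $u^{\otimes 2}R^1_{\crosspart}=R^1_{\crosspart}u^{\otimes 2}$ applied to basis vectors $e_a\otimes e_b$ and compare matrix coefficients in the basis $e_c\otimes e_d$.

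First I would compute the matrix coefficients of each side. Applying $R^1_{\crosspart}u^{\otimes 2}$ and reading off the $(c\otimes d, a\otimes b)$ coefficient, one gets $\delta_{\ee_{cd}=1}\,u_{da}u_{cb}$ (after the swap introduced by $R^1_{\crosspart}$ acts on the output indices). Applying $u^{\otimes 2}R^1_{\crosspart}$ and reading off the same coefficient gives $\delta_{\ee_{ab}=1}\,u_{ca}u_{db}$, where now the $\delta$ comes from $R^1_{\crosspart}$ acting first on the input and the swap is on the $a,b$ indices. The intertwiner relation $u^{\otimes 2}R^1_{\crosspart}=R^1_{\crosspart}u^{\otimes 2}$ is therefore equivalent to the family of equations
\[
\delta_{\ee_{cd}=1}\,u_{cb}u_{da}=\delta_{\ee_{ab}=1}\,u_{ca}u_{db}
\qquad\text{for all } a,b,c,d.
\]
I would double-check the index bookkeeping here carefully, since mixing up which pair of indices the swap acts on and which pair carries the $\ee$-delta is exactly the subtle point; this computation is the main obstacle, as everything else is formal.

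Having obtained this family of scalar equations, I would match it against the relations $(\mathring R^\ee2)$, namely $\delta_{\ee_{kl}=0}u_{ik}u_{jl}=\delta_{\ee_{ij}=0}u_{ik}u_{jl}$, and the commutation relations $(\mathring R^\ee1)$. The plan is to show each direction: assuming $\mathring R^\ee$ holds, verify the intertwiner equations above directly by case analysis on the values of $\ee_{ab}$ and $\ee_{cd}$ (using that $\delta_{\ee=1}=1-\delta_{\ee=0}$ to pass between the two formulations, and using $(\mathring R^\ee1)$ to reorder the commuting products when both $\ee$-entries equal $1$); conversely, assuming $R^1_{\crosspart}$ is an intertwiner, extract the relations $(\mathring R^\ee1)$ and $(\mathring R^\ee2)$ from the equations by specializing the indices so that exactly one of $\ee_{ab},\ee_{cd}$ vanishes, forcing the corresponding product to be zero, and then invoking Lemma~\ref{LemREpsAeqRRing} to recover the full set $\mathring R^\ee$ from $(\mathring R^\ee2)$ together with the commutation part. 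In both directions the symmetry of $\ee$ is what lets the two formulations of $\mathring R^\ee$ interchange. I expect the reverse direction to require the mild extra care of noting that $R^1_{\crosspart}$ being an intertwiner automatically gives its adjoint as an intertwiner too, so no separate argument for the transposed relations is needed.
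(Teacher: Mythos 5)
Your strategy is the same as the paper's: evaluate both sides of $u^{\otimes 2}R^1_{\crosspart}=R^1_{\crosspart}u^{\otimes 2}$ on basis vectors, compare coefficients, and match the resulting family of relations against $\mathring R^\ee$. The gap is in the coefficient identity you display, and it is not a harmless slip. With the convention $u^{\otimes 2}(e_a\otimes e_b)=\sum_{c,d}u_{ca}u_{db}\otimes e_c\otimes e_d$ used in the paper, the coefficient of $e_c\otimes e_d$ in $u^{\otimes 2}R^1_{\crosspart}(e_a\otimes e_b)$ is $\delta_{\ee_{ab}=1}\,u_{cb}u_{da}$ (the swap lands on the column indices), while the coefficient in $R^1_{\crosspart}u^{\otimes 2}(e_a\otimes e_b)$ is $\delta_{\ee_{cd}=1}\,u_{da}u_{cb}$ (the swap lands on the row indices \emph{and} reverses the order of the two factors) --- your prose actually records this last expression correctly, but your displayed equation does not. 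The correct family is
\[
\delta_{\ee_{cd}=1}\,u_{da}u_{cb}\;=\;\delta_{\ee_{ab}=1}\,u_{cb}u_{da}\qquad\text{for all }a,b,c,d,
\]
which for $\ee_{ab}=\ee_{cd}=1$ is exactly the commutation relation $(\mathring R^\ee 1)$ and otherwise forces the relevant products to vanish; after using the symmetry of $\ee$ this is precisely $\mathring R^\ee$. Your family $\delta_{\ee_{cd}=1}u_{cb}u_{da}=\delta_{\ee_{ab}=1}u_{ca}u_{db}$ has an unswapped product on the right and the same ordering of factors on both sides; in the case $\ee_{ab}=\ee_{cd}=1$ it reads $u_{cb}u_{da}=u_{ca}u_{db}$, a column-exchange relation of $R^\ee$-type rather than a commutation relation. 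This is genuinely not equivalent to $\mathring R^\ee$: for the matrix $\ee$ of Example \ref{ExKey} and the representation given there, taking $a=c=1$, $b=d=2$ gives $u_{12}u_{21}=1-p$ but $u_{11}u_{22}=p$, although $\ee_{12}=1$ and $C(S_4^\ee)$ satisfies $\mathring R^\ee$. So the case analysis you propose next cannot close; the point you flagged as needing double-checking is exactly where the argument breaks, and it has to be redone before the rest of the plan makes sense.

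Two smaller remarks. First, the appeal to Lemma \ref{LemREpsAeqRRing} is misplaced: $\mathring R^\ee$ is \emph{by definition} the conjunction of $(\mathring R^\ee 1)$ and $(\mathring R^\ee 2)$ (Definition \ref{DefRRing}), so once you have both there is nothing left to recover; that lemma concerns the passage between $R^\ee$ and $\mathring R^\ee$ and needs extra hypotheses which are irrelevant here. Second, no adjoint argument is needed in the converse direction: the single family of equations, quantified over all four indices and combined with the symmetry of $\ee$, already yields all of $\mathring R^\ee$, which is how the paper concludes in one line.
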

\begin{proof}
 We first compute:
\begin{align*}
u^{\otimes 2}R^1_{\crosspart}(e_l\otimes e_k)
&=u^{\otimes 2}\left(\delta_{\ee_{kl}=1}e_k\otimes e_l\right)\\
&=\sum_{i,j}\delta_{\ee_{kl}=1}u_{ik}u_{jl}\otimes e_i\otimes e_j
\end{align*}
And:
\begin{align*}
R^1_{\crosspart}u^{\otimes 2}(e_l\otimes e_k)
&=\sum_{i,j}u_{jl}u_{ik}\otimes R^1_{\crosspart}(e_j\otimes e_i)\\
&=\sum_{i,j}\delta_{\ee_{ij}=1}u_{jl}u_{ik}\otimes e_i\otimes e_j
\end{align*}
We infer that $R^1_{\crosspart}$ is an intertwiner (i.e. $R_{\crosspart}^1u^{\otimes 2}=u^{\otimes 2}R_{\crosspart}^1$) if and only if for all $i,j,k,l$:
\[\delta_{\ee_{kl}=1}u_{ik}u_{jl}=\delta_{\ee_{ij}=1}u_{jl}u_{ik}\]
These relations are equivalent to $\mathring R^\ee$ of Definition \ref{DefRRing}.
\end{proof}

In the next section, we will prove de Finetti theorems for $S_n^\ee$, $H_n^\ee$, $\mathring O_n^\ee$ and $\mathring B_n^\ee$, which all contain the intertwiner $R_{\crosspart}^1$ as an essential ingredient of our proof. For $\ee=\ee_{\free}$, these quantum groups are easy quantum groups in the sense of Banica and Speicher \cite{BS}. Their categories of partitions are as follows.

\begin{definition}
We define the following subsets of the set $P$ of all partitions.
\begin{itemize}
\item[(i)] $P_2$ is the set of all pair partitions, i.e. each block of any partition $\pi\in P_2$ consists of exactly two points.
\item[(ii)] $P_{1,2}$ is the set of all partitions $\pi\in P$, whose blocks consist either of one or of two points.
\item[(iii)] $P_{\even}$ consists of all partitions $\pi\in P$ whose blocks consist of an even number of points.
\end{itemize}
Let $\mathcal C(k)\subset P(k)$ be a set of partitions. We define for any multi index $i$ of length $k$:
\[NC_{\mathcal C}^\ee[i]:=\mathcal C(k)\cap NC^\ee[i]\]
\end{definition}

The category of partitions of $S_n^+$ is $NC$, the category of $H_n^+$ is $NC\cap P_{\even}$, the category of $B_n^+$ is $NC\cap P_{1,2}$ and the category of  $O_n^+$ is $NC\cap P_2$, see \cite{BS, Web}.
The sets of the above definition behave nicely with respect to taking subpartitions.

\begin{definition}
Let $\pi\in P(k)$ and $\sigma\in P(l)$ with $l\leq k$. Then $\sigma$ is a \emph{subpartition} of $\pi$, if
\begin{itemize}
\item[(i)] there are indices $1\leq p\leq q\leq k$ with $q-p+1=l$ such that $\pi$ restricted to the points $p,p+1,\ldots,q$ coincides with $\sigma$,
\item[(ii)] and no point $p\leq s\leq q$ of $\pi$ is in the same block as a point $1\leq t<p$ or $q<t\leq k$.
\end{itemize}
\end{definition}

\begin{lemma}\label{Block}
Let $\mathcal C\in\{P,P_2,P_{1,2},P_{\even}\}$, let $\pi\in\mathcal C$ and let $\sigma$ be a subpartition of $\pi$. Then $\sigma\in\mathcal C$ and also $\pi'\in\mathcal C$, where $\pi'$ is the partition obtained when removing $\sigma$ from $\pi$.
\end{lemma}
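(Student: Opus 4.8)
The plan is to observe that the entire statement reduces to a single structural fact about subpartitions, after which membership in each of the four classes follows immediately, because all four classes are defined purely by conditions on the \emph{sizes} of blocks.

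First I would make precise the consequence of condition (ii) in the definition of subpartition: every block of $\pi$ lies either entirely inside the interval $\{p,p+1,\ldots,q\}$ or entirely outside it. Indeed, if some block $V$ of $\pi$ contained a point $s$ with $p\le s\le q$ together with a point $t$ with $t<p$ or $t>q$, this would directly violate (ii); hence any block meeting the interval is contained in it. Consequently the block set of $\pi$ splits as a disjoint union of the blocks contained in $\{p,\ldots,q\}$ and the blocks disjoint from it, with nothing straddling the boundary.

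Next I would identify these two families with $\sigma$ and $\pi'$. By condition (i), restricting $\pi$ to $\{p,\ldots,q\}$ and shifting labels by the order-preserving bijection $s\mapsto s-p+1$ yields exactly $\sigma$, so the blocks of $\sigma$ are, up to this relabeling, precisely the blocks of $\pi$ contained in the interval. Likewise, removing these blocks from $\pi$ and relabeling the remaining points in an order-preserving fashion gives $\pi'$, whose blocks are precisely the blocks of $\pi$ disjoint from the interval. Since an order-preserving relabeling is a bijection that neither merges nor splits blocks, every block of $\sigma$ and every block of $\pi'$ has the same cardinality as the corresponding block of $\pi$.

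Finally, I would use that each of $P$, $P_2$, $P_{1,2}$, $P_{\even}$ is characterized by a condition on block sizes alone (no restriction; all blocks of size two; all blocks of size one or two; all blocks of even size). As $\pi\in\mathcal C$, every block of $\pi$ meets the relevant size condition, and by the previous step this condition is inherited by every block of $\sigma$ and of $\pi'$; hence $\sigma\in\mathcal C$ and $\pi'\in\mathcal C$. There is no genuine obstacle here: the only point requiring care is the block-straddling observation, which is exactly what condition (ii) is designed to rule out, while everything else is bookkeeping about block cardinalities.
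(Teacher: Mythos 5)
Your proposal is correct and follows essentially the same route as the paper, whose proof is the one-line observation that the block-size conditions defining each $\mathcal C$ are inherited by $\sigma$ and $\pi'$; you simply spell out the supporting fact that condition (ii) of the definition of subpartition prevents blocks from straddling the interval, so that blocks of $\sigma$ and $\pi'$ are blocks of $\pi$ with unchanged cardinalities.
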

\begin{proof}
The conditions of $\mathcal C$ on the number of points in each block hold true for $\sigma$ and $\pi'$.
\end{proof}

We need the following technical lemma.

\begin{lemma}\label{LemExistenceOfL}
Let $\pi\in P(k)$ be a partition containing no non-trivial subpartitions and let $\pi$ consist of at least two blocks.
Then, there is an index  $1\leq l <k$ such that:
\begin{itemize}
\item[(i)] The point $l$ belongs to a block $V$ whereas $l+1$ belongs to $V'$, and the blocks $V$ and $V'$ cross.
\item[(ii)] We have $\min\{x\in V'\}<\min\{x\in V\}$.
\end{itemize}
\end{lemma}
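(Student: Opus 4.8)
The plan is to single out the required index as the \emph{first descent} of the function recording, for each point, the opener of its block. For $i\in\{1,\dots,k\}$ write $m_i$ for the minimal element of the block of $\pi$ containing $i$; thus $m_i\le i$, and since distinct blocks have distinct minima, the value $m_i$ determines the block of $i$ (so the level sets of $m$ are exactly the blocks). First I would observe that $m$ cannot be non-decreasing on all of $\{1,\dots,k\}$: if it were, then each block, being a level set of a non-decreasing sequence, would be an interval, and in particular the block containing $1$ would be a prefix $\{1,\dots,r\}$; as $\pi$ has at least two blocks this prefix is proper, hence a non-trivial subpartition, contradicting the hypothesis. Therefore the set of \emph{descents}, i.e.\ indices with $m_{l+1}<m_l$, is non-empty; let $l$ be the smallest one. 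A descent satisfies $l+1\le k$, so indeed $1\le l<k$, and I set $V$ to be the block of $l$ and $V'$ the block of $l+1$.

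Condition (ii) is then immediate: $\min V'=m_{l+1}<m_l=\min V$, which in particular forces $V\ne V'$. The substance of the proof is condition (i), and here the choice of $l$ as the \emph{smallest} descent is what makes things work. Because no descent occurs before $l$, the sequence $m_1,\dots,m_l$ is non-decreasing, so the restrictions of the blocks to $\{1,\dots,l\}$ are the level sets of $m$ on this range, each of which is an interval. In particular $V\cap\{1,\dots,l\}=\{\min V,\dots,l\}$ is an interval with right endpoint $l$, and $\min V=m_l>m_{l+1}\ge 1$, whence $\min V\ge 2$.

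To obtain (i) I would argue by contradiction: suppose $V$ and $V'$ do not cross. Since $l\in V$ and $l+1\in V'$ with $\min V'<\min V$, if $V$ contained any point $u>l$, then in fact $u\ge l+2$ (as $l+1\in V'\ne V$), and the four points $\min V'<\min V<l+1<u$ would realise the crossing pattern $V',V,V',V$ between $V'$ and $V$, a contradiction. Hence $\max V=l$, and together with the interval structure above this gives $V=\{\min V,\dots,l\}$. But then $V$ is a complete block which is an interval not containing the point $1$, i.e.\ a proper, hence non-trivial, subpartition of $\pi$ — contradicting the hypothesis. Therefore $V$ and $V'$ cross, which is (i).

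The main obstacle, and the reason the statement is not merely a bookkeeping exercise, is precisely this last step. Condition (ii) holds for \emph{every} descent, but a generic descent need not give a crossing: the earlier-opening block $V'$ may simply surround $V$ (a nesting), in which case (i) fails for that choice of $l$. The key idea is that taking the \emph{first} descent links a hypothetical nesting to the existence of an interval-block, which irreducibility (absence of non-trivial subpartitions) explicitly forbids; this is what converts ``non-crossing'' into ``isolated subpartition'' and produces the contradiction. I expect the only delicate points in the write-up to be justifying that the level sets of $m$ on a non-decreasing range are intervals, and verifying the crossing pattern of the four distinguished points.
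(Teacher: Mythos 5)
Your proof is correct, and it takes a genuinely different route from the paper's. The paper first records that irreducibility forbids singleton blocks and forces every block to cross some other block, and then runs a minimality argument over pairs $p_1<p_2$ inside a block subject to two crossing/non-nesting conditions, deriving a contradiction by exhibiting a strictly smaller admissible pair; the index $l$ is extracted as $p_2-1$ for the minimal pair. You instead linearize the problem via the opener sequence $m_i=\min(\text{block of }i)$ and take its \emph{first} descent: irreducibility guarantees a descent exists (else the block of $1$ is a proper interval block), condition (ii) is automatic at any descent, and the absence of earlier descents makes $V\cap\{1,\dots,l\}$ an interval ending at $l$, so that a failure of (i) forces $V$ itself to be a proper interval block, again contradicting irreducibility. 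Both arguments are minimality arguments, but yours replaces the paper's two-parameter minimization (and its somewhat terse ``check that'' verifications) with a single well-ordered quantity and two invocations of the same consequence of irreducibility (``no proper interval that is a union of blocks''), which makes it easier to audit; the paper's version stays closer to the crossing structure itself, which is the language used in the rest of Proposition \ref{CorIntertwiner}. All the delicate points you flag — that level sets of a non-decreasing sequence are intervals, and that the four points $\min V'<\min V<l+1<u$ realize a crossing in the sense used in the paper — do check out, as does the implicit convention that a single block forming a proper interval counts as a non-trivial subpartition (the paper uses exactly this when it rules out singleton blocks).
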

\begin{proof}
Firstly, observe that no block of $\pi$ consists of a single point (otherwise it would form a subpartition) and that every block crosses with at least one other block (otherwise we would either find subpartitions between its legs, or the block would form a subpartition itself). Secondly, check that we may always find a block $V_p$ containing indices $p_1<p_2$ such that
\begin{itemize}
\item[(1)] there is an index $p_1<s<p_2$ whose block crosses with $V_p$,
\item[(2)] and there are no two indices $q_1,q_2$ in a block $V_q\neq V_p$ with $q_1<p_1<q_2<p_2$.
\end{itemize}
For instance, the block $V$ containing the point $1$ does the job, with $p_1:=1$ and $p_2:=\max\{x\in V\}$. 

Now, let $V_p$ and $p_1$, $p_2$ be  such that (1) and (2) are satisfied and $p_2-p_1$ is minimal. Then $l:=p_2-1$ is not in $V_p$ by minimality of $p_2-p_1$ and we have (ii) because of  (2).
Assume that (i) does not hold. Then, the block $V$ containing $l$ does not cross with $V_p$. Hence, there is at least a second point $s\in V$ with $p_1<s<l<p_2$ such that there is an index $s<t<l$ whose block crosses with $V$. Let $V_{p'}\neq V_p$ be the unique block containing indices $s$ and $u$ with $p_1<s<u<p_2$ such that there is an index $s<t<u$ whose block crosses with $V_{p'}$ and such that $\min\{x\in V_{p'}\}$ is minimal. Then, $p_1':=\min\{x\in V_{p'}\}$ and $p_2':=\max\{x\in V_{p'}\;|\; x<p_2\}$ satisfy (1) and (2), but $p'_2-p'_1<p_2-p_1$ contradicts the minimality assumption on $V_p$.
\end{proof}

We are ready to prove the crucial ingredient for our de Finetti theorem. Note that for $\ee=\ee_{\free}$, the proof is trivial.

\begin{proposition} \label{CorIntertwiner}
Let $k\in\NN$ and $\pi\in P(k)$. Let $G\subset O_n^+$ be a quantum subgroup of $O_n^+$ and let $\mathcal C\in\{P,P_2,P_{1,2},P_{\even}\}$. Let $R_{\crosspart}^1$ be an intertwiner of $G$ as well as the maps $T_\sigma$ for $\sigma\in\mathcal C\cap NC$.
\begin{itemize}
\item[(a)] Let $\pi\in\mathcal C$. Then the following map is an intertwiner of $G$:
\[M_\pi:(\CC^n)^{\otimes k}\to\CC,\qquad e_{i(1)}\otimes\ldots\otimes e_{i(k)}\mapsto \delta_{\pi\in NC_{\mathcal C}^\ee[i]}\]
\item[(b)]The following relations hold in $G$, for all $k\in\NN$ and all \linebreak $j(1),\ldots,j(k)\in\{1,\ldots,n\}$.
\[\sum_{i(1),\ldots,i(k)}\delta_{\pi\in NC_{\mathcal C}^\ee[i]}
u_{i(1)j(1)}\cdots u_{i(k)j(k)}=\delta_{\pi\in NC_{\mathcal C}^\ee[j]}\]
\end{itemize}
\end{proposition}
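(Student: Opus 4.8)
The two statements are equivalent, so it suffices to prove (a). Indeed, writing out the intertwiner relation for $M_\pi$ on the basis vectors, and using
$u^{\otimes k}(e_{j(1)}\otimes\cdots\otimes e_{j(k)})=\sum_{i}u_{i(1)j(1)}\cdots u_{i(k)j(k)}\otimes e_{i(1)}\otimes\cdots\otimes e_{i(k)}$
(with the scalar target $\CC$ carrying the trivial corepresentation), the equation $(\id\otimes M_\pi)u^{\otimes k}=M_\pi$ is literally the system of relations in (b). The plan for (a) is to realise $M_\pi$ as a composition of tensor products of the maps $\id$, $R^1_{\crosspart}$ and $T_\sigma$ for $\sigma\in\mathcal C\cap NC$, all of which are intertwiners of $G$ by assumption; since the intertwiner spaces of $G$ form a tensor category closed under composition, tensor products and adjoints, producing such an expression finishes the proof.

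I would argue by lexicographic induction on the pair $(k,c(\pi))$, where $c(\pi)$ denotes the number of crossings of $\pi$ (counted with multiplicity, i.e. as interleaving pairs of legs). For the base case, if $\pi$ is noncrossing then the $(\ee,i)$-noncrossing condition is vacuous, so $M_\pi(e_{i(1)}\otimes\cdots\otimes e_{i(k)})=\delta_{\pi\leq\ker i}=(T_\pi)^*(e_{i(1)}\otimes\cdots\otimes e_{i(k)})$ by Definition \ref{DefTpi}; as $\pi\in\mathcal C\cap NC$, the map $T_\pi$ and hence $M_\pi=(T_\pi)^*$ is an intertwiner. Next, if $\pi$ has a nontrivial subpartition $\sigma$ on the consecutive positions $p,\ldots,q$, let $\pi'$ be $\pi$ with $\sigma$ removed. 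Since $\sigma$ is contiguous and disconnected from the rest, no block is shared between $\sigma$ and $\pi'$ and every crossing of $\pi$ lies entirely inside $\sigma$ or entirely inside $\pi'$; hence $\delta_{\pi\in NC_{\mathcal C}^\ee[i]}$ factorises over the two groups of indices, which gives the operator identity $M_\pi=M_{\pi'}\circ(\id^{\otimes(p-1)}\otimes M_\sigma\otimes\id^{\otimes(k-q)})$. By Lemma \ref{Block} both $\sigma$ and $\pi'$ lie in $\mathcal C$, and both have fewer than $k$ points, so the induction hypothesis makes $M_\sigma$ and $M_{\pi'}$ intertwiners and therefore so is $M_\pi$.

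The crucial remaining case is $\pi$ irreducible (no nontrivial subpartition), with at least two blocks and $c(\pi)>0$. Here I would invoke Lemma \ref{LemExistenceOfL} to find an index $l$ with $l\in V$, $l+1\in V'$, the blocks $V\neq V'$ crossing, and $\min V'<\min V$, and let $\tilde\pi$ be the conjugate of $\pi$ by the transposition $\tau=(l,l+1)$. The goal is the identity
\[M_\pi=M_{\tilde\pi}\circ(\id^{\otimes(l-1)}\otimes R^1_{\crosspart}\otimes\id^{\otimes(k-l-1)}).\]
Evaluating the right-hand side on $e_{i(1)}\otimes\cdots\otimes e_{i(k)}$ produces the factor $\delta_{\ee_{i(l)i(l+1)}=1}$ coming from $R^1_{\crosspart}$, together with $\delta_{\tilde\pi\in NC_{\mathcal C}^\ee[\tau(i)]}$. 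The point is that $\pi\leq\ker i$ forces $i(l)$ and $i(l+1)$ to be the common index values on $V$ and $V'$, so the crossing of $V$ and $V'$ makes $\ee_{i(l)i(l+1)}=1$ precisely the $(\ee,i)$-noncrossing requirement attached to that crossing; by symmetry of $\ee$ this matches the constraint that $\tilde\pi$ carries after the swap, and one checks the two sides coincide for every $i$. Since $\tilde\pi$ has the same block sizes it again lies in $\mathcal C$, and it has the same number of points, so to close the induction it remains to show $c(\tilde\pi)<c(\pi)$.

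The main obstacle is exactly this strict decrease of the crossing count. A swap at an adjacent crossing pair fulfilling only conditions (i) and (ii) of Lemma \ref{LemExistenceOfL} need \emph{not} reduce $c$ — it can even increase it — so it is indispensable to use the minimality built into the construction of $l$ in the proof of Lemma \ref{LemExistenceOfL}. With that particular $l$ one verifies that interchanging $l$ and $l+1$ affects only the crossings between the single pair of blocks $(V,V')$, and that it resolves all of the crossings between $V$ and $V'$ that run through the interface at $l,l+1$ without creating new ones, whence $c(\tilde\pi)<c(\pi)$. Pushing this combinatorial bookkeeping through carefully, in tandem with the $\ee$-delta matching described above, is where the genuine work lies; the rest is formal manipulation inside the tensor category of intertwiners of $G$.
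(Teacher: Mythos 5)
Your strategy is the same as the paper's: write $M_\pi$ as a composition of the intertwiners $T_\sigma^*$ for $\sigma\in\mathcal C\cap NC$ and shifted copies of $R_{\crosspart}^1$, peeling off contiguous subpartitions and resolving crossings by adjacent transpositions. Your bookkeeping for the $\delta$-functions (in particular that $\pi\leq\ker i$ together with the crossing of $V$ and $V'$ forces $\ee_{i(l)i(l+1)}=1$, so the factor produced by $R_{\crosspart}^1$ is exactly the $(\ee,i)$-noncrossing constraint being transported), the factorisation over a removed subpartition, the use of Lemma \ref{Block} for membership in $\mathcal C$, and the passage from (a) to (b) all match Steps 3--5 of the paper's proof and are fine.

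The gap is the termination of your induction: the claim that the swap at the index $l$ furnished by Lemma \ref{LemExistenceOfL} strictly decreases the crossing number is false, as is the sub-claim that no new crossings are created. Take $\pi\in P(7)$ with blocks $V_1=\{1,3,5\}$, $V_2=\{2,6\}$, $V_3=\{4,7\}$ (admissible for $\mathcal C=P$). It has no nontrivial subpartition and $c(\pi)=5$. The smallest admissible index --- and the one produced by the construction in the proof of Lemma \ref{LemExistenceOfL} with $V_p=V_1$, $p_1=1$, $p_2=3$ --- is $l=2$, with $V=V_2\ni 2$ and $V'=V_1\ni 3$. The swapped partition has blocks $\{1,2,5\},\{3,6\},\{4,7\}$ and still has $c=5$: the crossing of $\{1,3\}$ with $\{2,6\}$ disappears, but a new crossing of $\{2,5\}$ with $\{3,6\}$ appears (the positions $2<3<5<6$ carry the nested pattern $V_2V_1V_1V_2$ before the swap and the interleaved pattern afterwards). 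So the lexicographic induction on $(k,c(\pi))$ does not advance. The paper does not use the crossing count; it runs the algorithm with the smallest admissible $l$ and argues termination by observing that successive Case-2 swaps pull the legs of the block with the leftmost minimum together one position at a time until it becomes an interval, hence a removable noncrossing subpartition (in the example: $\{1,3,5\}\to\{1,2,5\}\to\{1,2,4\}\to\{1,2,3\}$). A measure that does work, if you want to keep your inductive format, is the ``inversion number'' $\mathrm{inv}(\pi)=\#\{(x,y):x<y,\ x,y\text{ in different blocks},\ \min(\mathrm{block}(y))<\min(\mathrm{block}(x))\}$: condition (ii) of Lemma \ref{LemExistenceOfL} guarantees that each swap decreases $\mathrm{inv}$ by exactly one (the status of the pair $(l,l+1)$ flips, while the contributions of $(x,l)$ and $(x,l+1)$, resp.\ $(l,y)$ and $(l+1,y)$, merely exchange), whereas the crossing count can stay constant or even grow.
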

\begin{proof}
Let $\pi\in \mathcal C(k)$ be a partition. We now construct the linear map $M_\pi$ recursively from composing intertwiners of $G$. For doing so, we use the following algorithm to construct partitions $\pi_m\in P(k_m)$ and maps $M_m:(\CC^n)^{\otimes k_m}\to\CC$, for $m\in\NN_0$. 

\emph{Step 1: The algorithm for defining $M_\pi$.}

Let $\pi_0:=\pi$ and $k_0:=k$ and begin the algorithm with $m=0$.
\begin{itemize}
\item[(Case 1)] If there is a noncrossing subpartition $\sigma$ of $\pi_m$ on the indices $p,p+1,\ldots,q$ with $1\leq p\leq q\leq k_m$, we define
\[M_m:= \id^{\otimes p-1}\otimes T_{\sigma}^*\otimes\id^{\otimes k_m-q}\]
and let $\pi_{m+1}$ be the partition resulting from $\pi_m$ when removing $\sigma$. Put $k_{m+1}:=k_m-(q-p+1)$. If $\sigma=\pi_m$, terminate the algorithm after this step.  
\item[(Case 2)] If Case 1 does not apply, let $1\leq l <k_m$ be the smallest index such that:
\begin{itemize}
\item[$\bullet$] The point $l$ belongs to a block $V$ whereas $l+1$ belongs to $V'$, and the blocks $V$ and $V'$ cross.
\item[$\bullet$] We have $\min\{x\in V'\}<\min\{x\in V\}$.
\end{itemize}
We put:
\[M_m:=\id^{\otimes (l-1)}\otimes R^1_{\crosspart}\otimes \id^{\otimes (k_m-l-1)}\]
We define $\pi_{m+1}$ as the partition obtained from $\pi_m$ when swapping the legs on $l$ and $l+1$. We put $k_{m+1}:=k_m$.
\end{itemize}

An example of the algorithm can be found in Figure \ref{FigAlg}. Note that if $\pi_m$ does not contain a noncrossing subpartition, either $\pi_m$ or one of its subpartitions satisfies the assumptions of  Lemma \ref{LemExistenceOfL} which ensures the existence of an index $l$ as in Case 2.

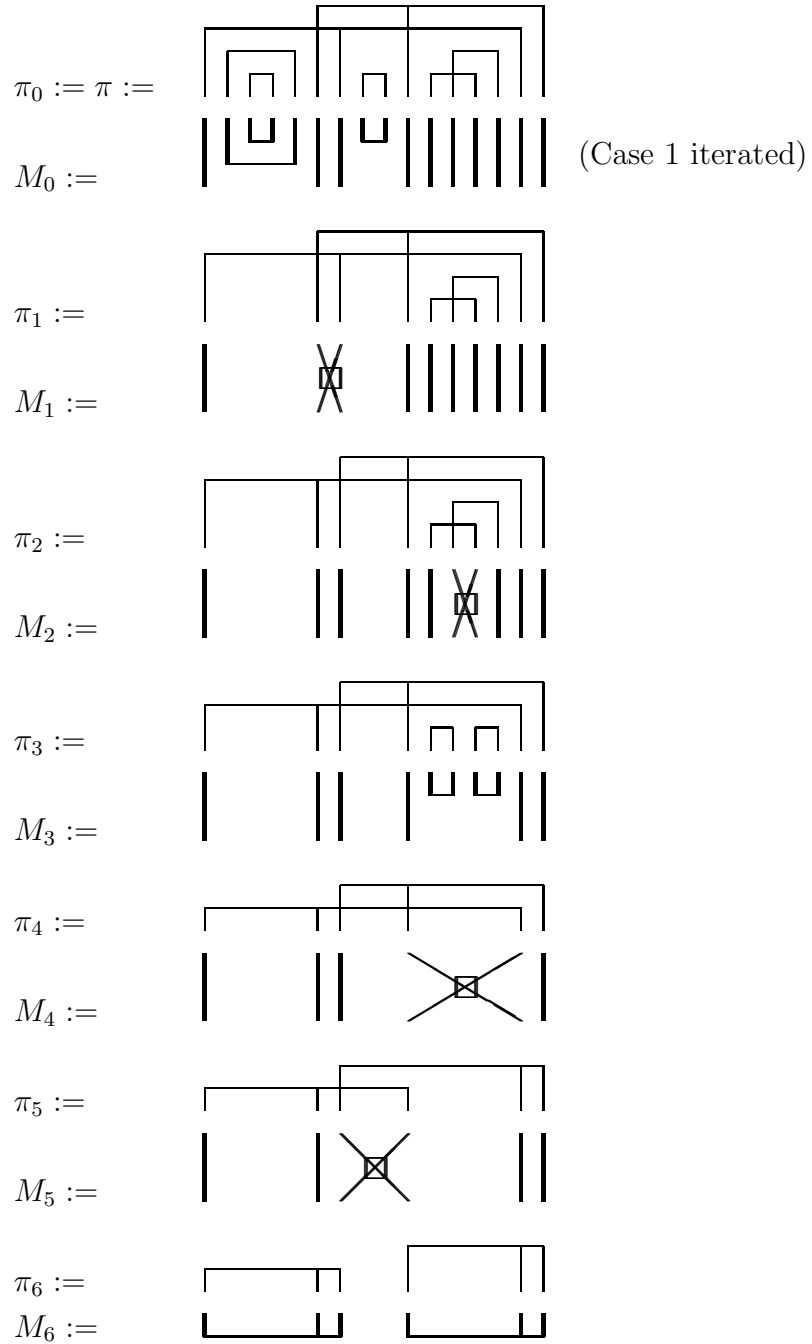
\begin{figure}
\setlength{\unitlength}{0.3cm}
\begin{center}
\begin{picture}(25,60)
\newsavebox{\XEins}\savebox{\XEins}
 {\begin{picture}(16,4)
   \put(0.5,-4){\line(1,3){1}}    
   \put(0.5,-1){\line(1,-3){1}}    
   \put(0.5,-3){$\square$}
   \end{picture}}
\newsavebox{\XDrei}\savebox{\XDrei}
 {\begin{picture}(16,4)
   \put(0.5,-4){\line(1,1){3}}    
   \put(0.5,-1){\line(1,-1){3}}    
   \put(1.5,-3){$\square$}
   \end{picture}}
\newsavebox{\XFuenf}\savebox{\XFuenf}
 {\begin{picture}(16,4)
   \put(0.5,-4){\line(5,3){5}}    
   \put(0.5,-1){\line(5,-3){5}}    
   \put(2.5,-3){$\square$}
   \end{picture}}
\newsavebox{\pNull}\savebox{\pNull}
 {\begin{picture}(16,4)
   \put(0,0){\uppartiii{3}{1}{7}{15}}
   \put(0,0){\uppartii{2}{2}{5}}   
   \put(0,0){\uppartii{1}{3}{4}}   
   \put(0,0){\uppartiii{4}{6}{10}{16}}   
   \put(0,0){\uppartii{1}{8}{9}}   
   \put(0,0){\uppartii{1}{11}{13}}   
   \put(0,0){\uppartii{2}{12}{14}}      
   \end{picture}}
\newsavebox{\pEins}\savebox{\pEins}
 {\begin{picture}(16,4)
   \put(0,0){\uppartiii{3}{1}{7}{15}}
   \put(0,0){\uppartiii{4}{6}{10}{16}}   
   \put(0,0){\uppartii{1}{11}{13}}   
   \put(0,0){\uppartii{2}{12}{14}}      
   \end{picture}}
\newsavebox{\pZwei}\savebox{\pZwei}
 {\begin{picture}(16,4)
   \put(0,0){\uppartiii{3}{1}{6}{15}}
   \put(0,0){\uppartiii{4}{7}{10}{16}}   
   \put(0,0){\uppartii{1}{11}{13}}   
   \put(0,0){\uppartii{2}{12}{14}}      
   \end{picture}}
\newsavebox{\pDrei}\savebox{\pDrei}
 {\begin{picture}(16,4)
   \put(0,0){\uppartiii{2}{1}{6}{15}}
   \put(0,0){\uppartiii{3}{7}{10}{16}}   
   \put(0,0){\uppartii{1}{11}{12}}   
   \put(0,0){\uppartii{1}{13}{14}}      
   \end{picture}}   
\newsavebox{\pVier}\savebox{\pVier}
 {\begin{picture}(16,4)
   \put(0,0){\uppartiii{1}{1}{6}{15}}
   \put(0,0){\uppartiii{2}{7}{10}{16}}   
   \end{picture}}      
\newsavebox{\pFuenf}\savebox{\pFuenf}
 {\begin{picture}(16,4)
   \put(0,0){\uppartiii{1}{1}{6}{10}}
   \put(0,0){\uppartiii{2}{7}{15}{16}}   
   \end{picture}}       
\newsavebox{\pSechs}\savebox{\pSechs}
 {\begin{picture}(16,4)
   \put(0,0){\uppartiii{1}{1}{6}{7}}
   \put(0,0){\uppartiii{2}{10}{15}{16}}   
   \end{picture}}      
\newsavebox{\MNull}\savebox{\MNull}
 {\begin{picture}(16,4)
   \put(0,0){\parti{3}{1}}
   \put(0,0){\partii{2}{2}{5}}   
   \put(0,0){\partii{1}{3}{4}} 
   \put(0,0){\parti{3}{6}}
   \put(0,0){\parti{3}{7}}        
   \put(0,0){\partii{1}{8}{9}}   
   \put(0,0){\parti{3}{10}}        
   \put(0,0){\parti{3}{11}}
   \put(0,0){\parti{3}{12}}        
   \put(0,0){\parti{3}{13}}
   \put(0,0){\parti{3}{14}}        
   \put(0,0){\parti{3}{15}}
   \put(0,0){\parti{3}{16}}        
   \end{picture}}
\newsavebox{\MEins}\savebox{\MEins}
 {\begin{picture}(16,4)
   \put(0,0){\parti{3}{1}}
   \put(6,0){\usebox{\XEins}}        
   \put(0,0){\parti{3}{10}}        
   \put(0,0){\parti{3}{11}}
   \put(0,0){\parti{3}{12}}        
   \put(0,0){\parti{3}{13}}
   \put(0,0){\parti{3}{14}}        
   \put(0,0){\parti{3}{15}}
   \put(0,0){\parti{3}{16}}        
   \end{picture}}
\newsavebox{\MZwei}\savebox{\MZwei}
 {\begin{picture}(16,4)
   \put(0,0){\parti{3}{1}}
   \put(0,0){\parti{3}{6}}
   \put(0,0){\parti{3}{7}}     
   \put(0,0){\parti{3}{10}}        
   \put(0,0){\parti{3}{11}}
   \put(12,0){\usebox{\XEins}}
   \put(0,0){\parti{3}{14}}        
   \put(0,0){\parti{3}{15}}
   \put(0,0){\parti{3}{16}}        
   \end{picture}}   
\newsavebox{\MDrei}\savebox{\MDrei}
 {\begin{picture}(16,4)
   \put(0,0){\parti{3}{1}}
   \put(0,0){\parti{3}{6}}
   \put(0,0){\parti{3}{7}}     
   \put(0,0){\parti{3}{10}}        
   \put(0,0){\partii{1}{11}{12}}
   \put(0,0){\partii{1}{13}{14}}  
   \put(0,0){\parti{3}{15}}
   \put(0,0){\parti{3}{16}}        
   \end{picture}}      
\newsavebox{\MVier}\savebox{\MVier}
 {\begin{picture}(16,4)
   \put(0,0){\parti{3}{1}}
   \put(0,0){\parti{3}{6}}
   \put(0,0){\parti{3}{7}}     
   \put(10,0){\usebox{\XFuenf}}
   \put(0,0){\parti{3}{16}}        
   \end{picture}}     
\newsavebox{\MFuenf}\savebox{\MFuenf}
 {\begin{picture}(16,4)
   \put(0,0){\parti{3}{1}}
   \put(0,0){\parti{3}{6}}
   \put(7,0){\usebox{\XDrei}}
   \put(0,0){\parti{3}{15}}   
   \put(0,0){\parti{3}{16}}        
   \end{picture}}        
\newsavebox{\MSechs}\savebox{\MSechs}
 {\begin{picture}(16,4)
   \put(0,0){\partiii{1}{1}{6}{7}}
   \put(0,0){\partiii{1}{10}{15}{16}}   
   \end{picture}}       
\put(0,55){$\pi_0:=\pi:=$}
\put(7,55){\usebox{\pNull}}
\put(0,51){$M_0:=$}
\put(7,55){\usebox{\MNull}}
\put(6.95,55){\usebox{\MNull}}
\put(7.05,55){\usebox{\MNull}}
\put(25,52){(Case 1 iterated)}
\put(0,45){$\pi_1:=$}
\put(7,45){\usebox{\pEins}}
\put(0,41){$M_1:=$}
\put(7,45){\usebox{\MEins}}
\put(6.95,45){\usebox{\MEins}}
\put(7.05,45){\usebox{\MEins}}
\put(0,35){$\pi_2:=$}
\put(7,35){\usebox{\pZwei}}
\put(0,31){$M_2:=$}
\put(7,35){\usebox{\MZwei}}
\put(6.95,35){\usebox{\MZwei}}
\put(7.05,35){\usebox{\MZwei}}
\put(0,26){$\pi_3:=$}
\put(7,26){\usebox{\pDrei}}
\put(0,22){$M_3:=$}
\put(7,26){\usebox{\MDrei}}
\put(6.95,26){\usebox{\MDrei}}
\put(7.05,26){\usebox{\MDrei}}
\put(0,18){$\pi_4:=$}
\put(7,18){\usebox{\pVier}}
\put(0,14){$M_4:=$}
\put(7,18){\usebox{\MVier}}
\put(6.95,18){\usebox{\MVier}}
\put(7.05,18){\usebox{\MVier}}
\put(0,10){$\pi_5:=$}
\put(7,10){\usebox{\pFuenf}}
\put(0,6){$M_5:=$}
\put(7,10){\usebox{\MFuenf}}
\put(6.95,10){\usebox{\MFuenf}}
\put(7.05,10){\usebox{\MFuenf}}
\put(0,2){$\pi_6:=$}
\put(7,2){\usebox{\pSechs}}
\put(0,0){$M_6:=$}
\put(7,2){\usebox{\MSechs}}
\put(6.95,2){\usebox{\MSechs}}
\put(7.05,2){\usebox{\MSechs}}
\end{picture}
\end{center}
\caption{An example of the algorithm in Thm. \ref{CorIntertwiner}.}\label{FigAlg}
\end{figure}

Moreover, the algorithm terminates since coming from Case 2, we will either be in Case 1 in the next step (reducing the length of the partition, or terminating) or we will be again in Case 2  successively pulling two crossing blocks side by side, which eventually brings us back to Case 1. Thus, we obtain a finite number of maps $M_0,\ldots,M_{t}$ and we put:
 \[M_\pi:=M_{t}\ldots M_0\]
 
\emph{Step 2: $M_\pi$ is an intertwiner of $G$.} 
 
It is easy to see, by Lemma \ref{Block}, that $\pi_m\in\mathcal C$ if and only if $\pi_{m+1}\in\mathcal C$. Hence, since $\pi_0=\pi$ is in $\mathcal C$, so are all $\pi_m$ and also all of their subpartitions. Therefore, by assumption on the intertwiner space of $G$, all maps  $M_m$ are intertwiners of $G$, and so is $M_\pi$.

\emph{Step 3: Proof of $M_\pi e_i=\delta_{\pi\in NC_{\mathcal C}^\ee[i]}$.}

We are left with proving the formula:
\[M_\pi(e_{i(1)}\otimes\ldots\otimes e_{i(k)})=\delta_{\pi\in NC_{\mathcal C}^\ee[i]}\]
Let us abbreviate $e_i:=e_{i(1)}\otimes\ldots\otimes e_{i(k)}$ for multi indices $i$. We are going to prove the following statement:
\begin{itemize}
\item[(*)] For all $0\leq m\leq t-1$ and for all multi indices $i$, we have $\pi_m\in NC_{\mathcal C}^\ee[i]$ if and only if $M_me_i\neq 0$ and $\pi_{m+1}\in NC_{\mathcal C}^\ee[j]$ for $e_j=M_me_i$.
\end{itemize}

Having proven (*), we infer inductively that $\pi=\pi_0\in NC_{\mathcal C}^\ee[i]$ if and only if  $\pi_{t}\in NC_{\mathcal C}^{\ee}[j]$ for $e_j=M_{t-1}\cdots M_0e_i\neq 0$. Since the above algorithm terminates at step $t$, the partition $\pi_t$ is noncrossing and hence $\pi_{t}\in NC_{\mathcal C}^{\ee}[j]$ if and only if $M_te_j=T_{\pi_t}^* e_j=1$ and $\pi_t\in\mathcal C$. We conclude $\pi\in NC_{\mathcal C}^\ee[i]$ if and only if $M_\pi e_i=1$. Since $M_\pi e_i$ only takes the values zero or one, this proves $M_\pi e_i=\delta_{\pi\in NC_{\mathcal C}^\ee[i]}$.

We are now going to prove (*). Let $0\leq m\leq t-1$ and let $i$ be a multi index. 

\emph{Step 4: Proof of (*) with $\pi_{m+1}$ resulting from Case 1 of the algorithm.}

Assume $\pi_m\in NC_{\mathcal C}^\ee[i]$. Then $\pi_m\leq \ker i$ implies that the noncrossing subpartition $\sigma$ of $\pi_m$ is less or equal to  the relevant section of the multi index $i$, and hence $M_me_i\neq 0$. Since $\pi_{m+1}$ arises as a restriction of $\pi_m$ while $j$ arises as a restriction of $i$, we also have $\pi_{m+1}\in NC_{\mathcal C}^\ee[j]$.

Conversely, let $\pi_{m+1}\in NC_{\mathcal C}^\ee[j]$ and $M_me_i\neq 0$. We have $\pi_m\leq \ker i$ by the following. If $p$ and $q$ are in the same block of $\pi_m$, then either both of them are in $j$ or none of them is, since $\sigma$ is a subpartition. In the first case, $\pi_{m+1}\leq\ker j$ implies $i(p)=i(q)$ whereas in the second, this is ensured by $M_me_i\neq 0$. Finally,  $\pi_m$ is $(\ee,i)$-noncrossing since any crossing in $\pi_m$ yields a crossing in $\pi_{m+1}$ whose $\ee$-entry is 1, because $\pi_{m+1}$ is $(\ee,j)$-noncrossing.

\emph{Step 5: Proof of (*) with $\pi_{m+1}$ resulting from Case 2 of the algorithm.}

In Case 2 of the algorithm, $\pi_{m+1}$ is obtained from $\pi_m$ by swapping the legs on $l$ and $l+1$. We have $M_me_i\neq 0$ if and only if $\ee_{i(l)i(l+1)}=1$. Moreover, assuming $\pi_m\in NC_{\mathcal C}^\ee[i]$, we obtain $\ee_{i(l)i(l+1)}=1$, since the blocks on $l$ and $l+1$ cross.  We may thus assume $\ee_{i(l)i(l+1)}=1$ throughout Step 5. We know that $j$ is of the form:
\[j=(i(1),\ldots,i(l-1),i(l+1),i(l),i(l+2),\ldots,i(k_m)\]

Assume $\pi_m\leq \ker i$. We want to prove $\pi_{m+1}\leq\ker j$. Let $p$ and $q$ be in the same block of $\pi_{m+1}$. If $\{p,q\}\cap\{l,l+1\}=\emptyset$, then $\pi_{m+1}$ and $\pi_m$ coincide on $p$ and $q$, i.e. $p$ and $q$ are also in the same block of $\pi_m$, implying $j(p)=i(p)=i(q)=j(q)$. On the other hand, if $\{p,q\}\cap\{l,l+1\}\neq\emptyset$, assume $p=l$. Then $q\neq l+1$ since $l$ and $l+1$ are in different blocks. Now, $l$ and $q$ being in the same block of $\pi_{m+1}$ implies that $l+1$ and $q$ are in the same block of $\pi_m$ and hence $j(p)=i(l+1)=i(q)=j(q)$. The other cases of $\{p,q\}\cap\{l,l+1\}\neq\emptyset$ are similar. Since the argument is symmetric, we just proved $\pi_m\leq \ker i$ if and only if $\pi_{m+1}\leq\ker j$.

Assume that $\pi_m$ is $(\ee,i)$-noncrossing. Then $\pi_{m+1}$ is $(\ee,j)$-noncrossing due to the following discussion. Let $p_1<q_1<p_2<q_2$ be points of $\pi_{m+1}$ such that $p_1,p_2\in V_p$ and $q_1,q_2\in V_q\neq V_p$. If $\{p_1,q_1,p_2,q_2\}\cap\{l,l+1\}=\emptyset$, we have $\ee_{j(p_1)j(q_1)}=1$ since $\pi_{m+1}$ coincides with $\pi_m$ on the relevant points. If $\{p_1,q_1,p_2,q_2\}\cap\{l,l+1\}=\{l\}$, then we are in the situation that some block $V$ of $\pi_{m+1}$ crosses with the block on $l$. This is equivalent to this block $V$ of $\pi_m$ crossing with the block on $l+1$ in $\pi_m$. We infer $\ee_{j(p_1)j(q_1)}=1$. We argue analoguously if $\{p_1,q_1,p_2,q_2\}\cap\{l,l+1\}=\{l+1\}$. Finally, $\{p_1,q_1,p_2,q_2\}\cap\{l,l+1\}=\{l,l+1\}$ implies $\ee_{i(p_1)i(q_1)}=\ee_{i(l)i(l+1)}=1$. Again, the argument is symmetric in $\pi_m$ and $\pi_{m+1}$.

We conclude that $\pi_m\in NC_{\mathcal C}^\ee[i]$ if and only if $\pi_{m+1}\in NC_{\mathcal C}^\ee[j]$, provided that $\ee_{i(l)i(l+1)}=1$.

(b) Finally, $M_\pi u^{\otimes k}=M_\pi$ yields the desired relations on the $u_{ij}$'s.
\end{proof}

\section{Symmetries of $\ee$-independence}

There are classical and noncommutative versions of de Finetti theorems characterizing independences by distributional symmetries. We recall when a distribution is invariant under a quantum group action. For details see \cite{SpK,BCS}.

\begin{definition}
Let $x_1,\ldots, x_n\in A$ be self-adjoint random variables in a noncommutative probability space $(A,\phi)$. 
\begin{itemize}
\item[(a)] Let $G\subset O_n^+$ be a compact matrix quantum group. We say that \emph{the distribution of $x_1,\ldots,x_n$ is invariant under $G$}, if for all $k\in\NN$ and all $j(1),\ldots,j(k)\in\{1,\ldots,n\}$:
\[\phi(x_{j(1)}\cdots x_{j(k)})1=
\sum_{i(1),\ldots,i(k)}\phi(x_{i(1)}\cdots x_{i(k)})u_{i(1)j(1)}\cdots u_{i(k)j(k)}\]
\item[(b)] We say that the variables $x_1,\ldots,x_n$ are \emph{identically distributed}, if we have for all $k\in\NN$ and all $1\leq i,j\leq n$:
\[\phi(x_i^k)=\phi(x_j^k)\]
\end{itemize}
\end{definition}

The above definition (a) is a natural extension of the notion of distributional invariance for groups. Indeed, for instance if $G=S_n$, evaluating the above equation at $\sigma\in S_n$ yields:
\begin{align*}
\phi(x_{j(1)}\cdots x_{j(k)})1
&=\sum_{i(1),\ldots,i(k)}\phi(x_{i(1)}\cdots x_{i(k)})\delta_{i(1)\sigma(j(1))}\cdots \delta_{i(k)\sigma(j(k))}\\
&=\phi(x_{\sigma(j(1))}\cdots x_{\sigma(j(k))})1
\end{align*}
This is just the well-known exchangeability. We now recall some existing de Finetti theorems.

\begin{proposition}[{\cite{SpK}}]\label{deFinettiClausRoland}
Let $(x_n)_{n\in\NN}$ be a sequence of selfadjoint random variables in a noncommutative $W^*$-probability space $(M,\phi)$ such that $M$ is generated by $x_n, n\in\NN$. The following holds true.
\begin{itemize}
\item[(a)] Suppose that the elements $x_n$ comute.

\noindent
The sequence $(x_n)_{n\in\NN}$ is conditionally independent and identically distributed if and only if its distribution is invariant under $(S_n)_{n\in\NN}$.
\item[(b)] The sequence $(x_n)_{n\in\NN}$ is conditionally free  and identically distributed  if and only if its distribution is invariant under $(S_n^+)_{n\in\NN}$.
\end{itemize}
\end{proposition}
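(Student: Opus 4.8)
The plan is to prove each equivalence in both directions, and to obtain (a) as the commutative specialization of (b): abelianizing $C(S_n^+)$ to $C(S_n)$ turns the quantum coaction into the ordinary symmetric-group action and "free with amalgamation" into "classically conditionally independent", so it suffices to organize everything around (b) and read off (a) at the end. Throughout I would work over the tail von Neumann algebra $\cB:=\bigcap_N \mathrm{vN}(x_k\mid k\geq N)$ with its $\phi$-preserving conditional expectation $E\colon M\to\cB$; "conditionally free" means free with amalgamation over $\cB$.

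For the \emph{easy direction} of (b) --- conditional freeness together with identical distribution implies invariance --- I would expand both sides of
\[\phi(x_{j(1)}\cdots x_{j(k)})\,1=\sum_{i(1),\ldots,i(k)}\phi(x_{i(1)}\cdots x_{i(k)})\,u_{i(1)j(1)}\cdots u_{i(k)j(k)}\]
through the operator-valued moment--cumulant formula for freeness with amalgamation over $\cB$. This reduces the identity to a statement about noncrossing partitions: since the generators $u_{ij}$ of $C(S_n^+)$ are exactly subject to the relations making every $T_\pi$ with $\pi\in\NC$ an intertwiner, the contribution of each noncrossing block matches on both sides, while identical distribution makes the per-block cumulant independent of the index it carries, so the right-hand sum collapses onto the left-hand moment.

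The \emph{hard direction} --- invariance implies conditional freeness and identical distribution --- is the substantial part, and I would carry it out in three steps. First, identical distribution: feeding words supported on a single index into the invariance identity and using $\sum_i u_{ij}=1$ yields $\phi(x_i^k)=\phi(x_j^k)$ for all $i,j,k$. Second, construction of the conditional expectation: writing $h_n$ for the Haar state of $C(S_n^+)$ and using the compatible coactions $\alpha_n\colon M\to M\otimes C(S_n^+)$, I would set $E_n:=(\mathrm{id}\otimes h_n)\circ\alpha_n$ and show, by a noncommutative backward-martingale mean-ergodic argument, that $E_n$ converges to a $\phi$-preserving conditional expectation $E\colon M\to\cB$ onto the tail algebra. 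Third, freeness with amalgamation over $\cB$: this is reduced to the vanishing of $E$ on alternating products of $\cB$-centered elements, and that vanishing is extracted from the invariance identity by integrating against the Haar state and applying the Weingarten calculus, which in the limit $n\to\infty$ supports the mixed moments only on noncrossing pairings of coinciding indices --- precisely freeness over $\cB$.

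The main obstacle is the second and third steps of the hard direction: passing from the \emph{scalar} invariance identity to genuine \emph{operator-valued} freeness. The amalgamation algebra $\cB$ and its conditional expectation must be manufactured out of the symmetry alone, and one must prove the strong block-factorization over $\cB$ rather than mere scalar factorization of moments; controlling the Weingarten asymptotics so that the crossing contributions are annihilated in the limit is where the real work lies. The commutative case (a) then follows by specialization: when all $u_{ij}$ commute the Weingarten calculus degenerates to ordinary averaging over $S_n$, the noncrossing constraint disappears, and one recovers classical conditional independence in the Hewitt--Savage spirit.
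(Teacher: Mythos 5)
This proposition is not proved in the paper at all: it is imported verbatim from \cite{SpK} (K\"ostler--Speicher) as background, so there is no in-paper argument to compare yours against. Judged on its own terms, your outline correctly reproduces the architecture of the known proof: the easy direction via the (operator-valued) moment--cumulant formula and the fact that the defining relations of $C(S_n^+)$ make the $T_\pi$, $\pi$ noncrossing, into intertwiners is essentially the computation the paper itself carries out in scalar-valued form in Theorem \ref{deFinetti}; and for the hard direction the tail algebra, the conditional expectation $E_n=(\id\otimes h_n)\circ\alpha_n$, the reverse-martingale convergence, and the Weingarten asymptotics are exactly the ingredients of \cite{SpK} and its refinements. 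But what you have written is a plan, not a proof: the two steps you yourself identify as the substance (mean-ergodic convergence of $E_n$ onto the tail algebra, and the asymptotic annihilation of crossing contributions forcing freeness with amalgamation) are named rather than executed, and they are precisely where the content of the theorem lies.

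There is also one genuine logical error. Part (a) does \emph{not} follow from part (b) by abelianization. The hypothesis of (a) is invariance under the ordinary symmetric groups $S_n$, which is strictly \emph{weaker} than invariance under $S_n^+$ (quantum exchangeability implies classical exchangeability, not conversely), so you cannot ``specialize'' the quantum statement to obtain the classical one: passing to the abelianization $C(S_n)$ of $C(S_n^+)$ weakens the hypothesis rather than the conclusion. Part (a) is the classical de Finetti/Hewitt--Savage theorem in $W^*$-probabilistic dress and requires its own (classical, reverse-martingale) argument --- analogous in method to (b), but not a corollary of it. If you want to keep the unified presentation, you should say that the two parts are proved by \emph{parallel} arguments, not that one specializes to the other.
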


The above theorem has been extended to other quantum groups by Banica, Curran and the first author \cite{BCS}:
\begin{itemize}
\item Invariance under $H_n^+$ adds the condition that the distribution of the variables is even.
\item Invariance under $B_n^+$ adds the condition that the distribution are semicircles with common mean and common variance.
\item Invariance under $O_n^+$ adds  the condition that the distribution are semicircles with mean zero and common variance.
\end{itemize}

In our framework, we do not have such a de Finetti theorem for the moment since we are lacking  an operator-valued version of $\ee$-indepence (needed to formulate what ``conditionally $\ee$-independent'' is supposed to mean). However, the equivalences of the above de Finetti theorems rely on a finite and purely algebraic version of one of the directions which we formulate here in the scalar-valued form.

\begin{proposition}[{\cite{BCS}}]
Let $x_1,\ldots,x_n$ be selfadjoint random variables in a noncommutative probability space $(A,\phi)$.
\begin{itemize}
\item[(a)] Suppose that the elements $x_1,\ldots,x_n$ commute.

\noindent
If $x_1,\ldots,x_n$ are independent and identically distributed, then their distribution is invariant under $S_n$.
\item[(b)] If $x_1,\ldots,x_n$ are free and identically distributed, then their distribution is invariant under $S_n^+$.
\end{itemize}
\end{proposition}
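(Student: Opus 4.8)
The plan is to derive this classical/free de Finetti direction as the two endpoint cases of the $\ee$-machinery already set up, namely $\ee=\ee_{\comm}$ (so that $S_n=S_n^{\ee_{\comm}}$ and, by Example \ref{ExEps}(a), ``commuting + independent + identically distributed'' means $\ee_{\comm}$-independent and identically distributed) for part (a), and $\ee=\ee_{\free}$ (so that $S_n^+=S_n^{\ee_{\free}}$ and, by Example \ref{ExEps}(b), ``free'' means $\ee_{\free}$-independent) for part (b). In both cases I take $\mathcal{C}=P$, so that $NC_{\mathcal{C}}^\ee[i]=NC^\ee[i]$. First I would verify the hypotheses of Proposition \ref{CorIntertwiner}. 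For $S_n^+$ we have $\ee=\ee_{\free}$, hence $R^1_{\crosspart}=0$ is trivially an intertwiner, and the $T_\sigma$ for $\sigma\in NC$ are intertwiners of $S_n^+$; for $S_n$ we have $\ee=\ee_{\comm}$, and the relations $\mathring R^{\ee_{\comm}}$ follow from commutativity of the generators together with the orthogonality relations $u_{ik}u_{jk}=u_{ki}u_{kj}=0$ for $i\neq j$, so Lemma \ref{LemRRingEpsIntertwiner} makes $R^1_{\crosspart}$ an intertwiner, while the $T_\sigma$ for $\sigma\in NC\subset P$ are intertwiners of $S_n$ as well.

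Next I would unfold the right-hand side of the invariance equation using the moment-cumulant formula (Proposition \ref{MCFormula}),
\[\phi(x_{i(1)}\cdots x_{i(k)})=\sum_{\pi\in NC^\ee[i]}\kappa_\pi(x_{i(1)},\ldots,x_{i(k)}).\]
The key observation is that identical distribution strips off the dependence of the cumulants on the multi-index: since every $\pi\in NC^\ee[i]$ satisfies $\pi\leq\ker i$, the quantity $\kappa_\pi(x_{i(1)},\ldots,x_{i(k)})$ is a product over the blocks $V\in\pi$ of free cumulants $\kappa_{|V|}(x_r,\ldots,x_r)$ evaluated at the single index $r$ carried by $V$, and identical distribution makes each factor independent of $r$. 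Writing $c_\pi$ for the resulting scalar, which is now defined for every $\pi\in P(k)$, I obtain $\phi(x_{i(1)}\cdots x_{i(k)})=\sum_{\pi\in P(k)}c_\pi\,\delta_{\pi\in NC^\ee[i]}$.

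The remaining step is an interchange of summation followed by a single application of Proposition \ref{CorIntertwiner}(b). Substituting and swapping the sums over $i$ and over $\pi$ gives
\[\sum_{i}\phi(x_{i(1)}\cdots x_{i(k)})\,u_{i(1)j(1)}\cdots u_{i(k)j(k)}=\sum_{\pi\in P(k)}c_\pi\sum_{i}\delta_{\pi\in NC^\ee[i]}\,u_{i(1)j(1)}\cdots u_{i(k)j(k)},\]
and the inner sum equals $\delta_{\pi\in NC^\ee[j]}\,1$ by Proposition \ref{CorIntertwiner}(b) (with $\mathcal{C}=P$). Folding the moment-cumulant formula back up then yields
\[\sum_{\pi\in P(k)}c_\pi\,\delta_{\pi\in NC^\ee[j]}\,1=\Big(\sum_{\pi\in NC^\ee[j]}c_\pi\Big)1=\phi(x_{j(1)}\cdots x_{j(k)})\,1,\]
which is precisely invariance of the distribution under $G$. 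Part (a) is the specialization $\ee=\ee_{\comm}$, $G=S_n$, and part (b) is $\ee=\ee_{\free}$, $G=S_n^+$.

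The genuinely hard work sits in Proposition \ref{CorIntertwiner}, which I am taking as given; relative to that, I expect the main obstacle to be the bookkeeping ensuring that identical distribution really produces index-free scalars $c_\pi$ and that the partition class summed over in the moment-cumulant formula coincides with the class $NC_{\mathcal{C}}^\ee$ governing the intertwiner relation (both equal $NC^\ee$ once $\mathcal{C}=P$). I would also take care to pin down exactly where the commuting hypothesis of (a) enters: it is what identifies classical independence with $\ee_{\comm}$-independence via Example \ref{ExEps}(a), so that Proposition \ref{MCFormula} with $\ee_{\comm}$ legitimately computes the mixed moment $\phi(x_{i(1)}\cdots x_{i(k)})$.
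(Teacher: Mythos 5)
Your argument is correct and is essentially the route the paper itself takes: the paper states this proposition as a citation from \cite{BCS} but immediately remarks that Theorem \ref{deFinetti} contains it as a special case, and the proof of Theorem \ref{deFinetti} is exactly your chain of moment--cumulant expansion, index-independence of $\kappa_\pi$ from identical distribution, interchange of sums, and Proposition \ref{CorIntertwiner}(b), specialized to $(\ee_{\comm},S_n)$ and $(\ee_{\free},S_n^+)$ with $\mathcal C=P$. Your verifications of the hypotheses of Proposition \ref{CorIntertwiner} in the two endpoint cases (in particular that $\mathring R^{\ee_{\comm}}$ holds in $C(S_n)$ via commutativity plus orthogonality of the row/column projections, and that $R^1_{\crosspart}=0$ for $\ee_{\free}$) are accurate.
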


Again, this statement has versions for $H_n^+$, $B_n^+$ and $O_n^+$, \cite{BCS}.
We now prove an $\ee$-version containing the above proposition as a special case. For doing so, we need to define further quantum groups based on Definition \ref{DefGEps} and Lemma \ref{LemRZwei}.
Recall from Lemma \ref{LemREpsAeqRRing} that the relations $R^\ee$ and $\mathring R^\ee$ are equivalent in any quotient of $C(H_n^+)$. 

\begin{definition}
The \emph{$\ee$-hyperoctahedral group} $H_n^\ee$ is given by the quotient of $H_n^+$ by the relations $\mathring R^\ee$, i.e.:
\[C(H_n^\ee):= C^*(u_{ij}\;|\; u_{ik}u_{jk}=u_{ki}u_{kj}=0\;\forall i\neq j\textnormal{ and } \mathring R^\ee)\]
\end{definition}

For $\ee=\ee_{\free}$, the quantum group $H_n^\ee$ is nothing but the hyperoctahedral quantum group $H_n^+$ as defined by Banica, Bichon and Collins \cite{BBC}. 
As for the analogs of $O_n^+$ and $B_n^+$ in our de Finetti theorem, there is a little subtlety: The relations $R^\ee$ and $\mathring R^\ee$ are \emph{not} equivalent for general subgroups of $O_n^+$. Moreover, our Proposition \ref{CorIntertwiner} requires the existence of $R^1_{\crosspart}$ as an intertwiner. Therefore (see Lemma \ref{LemRRingEpsIntertwiner}), our de Finetti theorem is designed for quantum groups satisfying the relations $\mathring R^\ee$.

\begin{definition}\label{DefRRingOn}
We define the quantum group $\mathring O_n^\ee$ via the following universal $C^*$-algebra:
\[C(\mathring O_n^\ee)=C^*(u_{ij}, i,j=1,\ldots,n\;|\; u_{ij}=u_{ij}^*, u \textnormal{ is orthogonal}, \mathring R^{\ee})\]
We define the quantum group $\mathring B_n^\ee$ via the quotient of $C(\mathring O_n^\ee)$ by the relations $\sum_ku_{ik}=\sum_ku_{kj}=1$.
\end{definition}

Note that for  $\ee=\ee_{\free}$, we have $\mathring O_n^\ee= O_n^\ee=O_n^+$, but for $\ee=\ee_{\comm}$, we have $\mathring O_n^\ee=H_n \subsetneq O_n=O_n^\ee$, since in that case $u_{ik}u_{jk}=u_{ki}u_{kj}=0$ in $\mathring O_n^\ee$ for all $i\neq j$.

\begin{theorem}\label{deFinetti}
Let $\ee\in M_n(\{0,1\})$ and let $x_1,\ldots,x_n$ be selfadjoint random variables in a noncommutative probability space $(A,\phi)$ such that $x_ix_j=x_jx_i$ if $\ee_{ij}=1$.
Let $x_1,\ldots,x_n$ be $\ee$-independent and identically distributed.
\begin{itemize}
\item[(a)] Then their distribution is invariant under $S_n^\ee$.
\item[(b)] If their distribution is even (all odd free cumulants vanish), then it is invariant under $H_n^\ee$.
\item[(c)] If only free cumulants of blocks of size one or two are non-zero (i.e. if the distribution is a shifted semicircular), then the distribution is invariant under $\mathring B_n^\ee$.
\item[(d)] If only free cumulants of blocks of size two are non-zero (i.e. if the distribution is a centered semicircular), then the distribution is invariant under $\mathring O_n^\ee$.
\end{itemize}
%\item[(b)]  If $x_1,\ldots,x_n$ are $\ee$-independent and identically distributed as centered $\ee$-Gaussians, then their distribution is invariant under $O_n^\ee$.
\end{theorem}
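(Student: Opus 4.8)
The plan is to combine the $\ee$-moment-cumulant formula of Proposition~\ref{MCFormula} with the intertwiner relations of Proposition~\ref{CorIntertwiner}(b), which already contains all the real work. For the four parts I would work with the matching class $\mathcal C$: namely $\mathcal C=P$ for (a), $\mathcal C=P_{\even}$ for (b), $\mathcal C=P_{1,2}$ for (c) and $\mathcal C=P_2$ for (d). In each case the relevant quantum group ($S_n^\ee$, $H_n^\ee$, $\mathring B_n^\ee$, $\mathring O_n^\ee$) is a quantum subgroup of $O_n^+$ satisfying the relations $\mathring R^\ee$, so by Lemma~\ref{LemRRingEpsIntertwiner} the map $R^1_{\crosspart}$ is an intertwiner; moreover the maps $T_\sigma$ for $\sigma\in\mathcal C\cap NC$ are intertwiners, since these are exactly the generating partitions of the categories of $S_n^+$, $H_n^+$, $B_n^+$ and $O_n^+$ respectively, and intertwiners survive passing to quantum subgroups. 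Thus Proposition~\ref{CorIntertwiner}(b) is available for the correct $\mathcal C$.

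First I would fix $k$ and indices $j(1),\ldots,j(k)$ and expand the right-hand side of the invariance equation via Proposition~\ref{MCFormula}, writing each moment $\phi(x_{i(1)}\cdots x_{i(k)})$ as $\sum_{\pi\in NC^\ee[i]}\kappa_\pi(x_{i(1)},\ldots,x_{i(k)})$. The crucial simplification is that, since the $x_i$ are identically distributed, whenever $\pi\leq\ker i$ the cumulant $\kappa_\pi(x_{i(1)},\ldots,x_{i(k)})$ equals a constant $\kappa_\pi:=\prod_{V\in\pi}\kappa_{|V|}$ independent of the multi-index $i$, where $\kappa_m$ is the common $m$-th free cumulant of the $x_i$ (identical moments of single variables force identical free cumulants by Möbius inversion). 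Interchanging the two summations then yields
\[\sum_{i(1),\ldots,i(k)}\phi(x_{i(1)}\cdots x_{i(k)})\,u_{i(1)j(1)}\cdots u_{i(k)j(k)}=\sum_{\pi\in P(k)}\kappa_\pi\sum_{i:\,\pi\in NC^\ee[i]}u_{i(1)j(1)}\cdots u_{i(k)j(k)}.\]

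Next I would use the distributional hypothesis of each part to discard all $\pi\notin\mathcal C$: evenness in (b) forces $\kappa_\pi=0$ unless every block of $\pi$ has even size, and (c), (d) analogously force $\kappa_\pi=0$ unless $\pi\in P_{1,2}$, resp.\ $\pi\in P_2$ (in (a) nothing is discarded). Hence the outer sum may be restricted to $\pi\in\mathcal C(k)$, and for such $\pi$ the condition $\pi\in NC^\ee[i]$ coincides with $\pi\in NC_{\mathcal C}^\ee[i]$. Applying Proposition~\ref{CorIntertwiner}(b) evaluates the inner sum to $\delta_{\pi\in NC_{\mathcal C}^\ee[j]}$, so that, using once more that $\kappa_\pi=0$ for $\pi\notin\mathcal C$, the right-hand side collapses to $\sum_{\pi\in NC^\ee[j]}\kappa_\pi(x_{j(1)},\ldots,x_{j(k)})$, which by Proposition~\ref{MCFormula} is exactly $\phi(x_{j(1)}\cdots x_{j(k)})$. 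This is the left-hand side, establishing invariance.

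The only genuinely delicate point is the bookkeeping: one must check that the class $\mathcal C$ attached to each quantum group matches precisely the set of partitions whose cumulants survive the distributional hypothesis, so that restricting the outer sum to $\mathcal C(k)$ loses nothing and Proposition~\ref{CorIntertwiner}(b) can be invoked with the right $\mathcal C$. Everything else is a routine assembly, since the combinatorial heart of the argument is already encapsulated in Proposition~\ref{CorIntertwiner}.
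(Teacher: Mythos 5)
Your proposal is correct and follows essentially the same route as the paper's proof: expand via the $\ee$-moment--cumulant formula, use identical distribution to make $\kappa_\pi$ index-independent, interchange sums, and evaluate the inner sum with Proposition~\ref{CorIntertwiner}(b) for the matching class $\mathcal C$. The only difference is that you spell out more explicitly why the hypotheses of Proposition~\ref{CorIntertwiner} are met and why the sum may be restricted to $\mathcal C(k)$, which the paper compresses into ``by our assumptions on the cumulants''.
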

\begin{proof}
Let $x_1,\ldots,x_n$ be $\ee$-independent and identically distributed, let $k\in\NN$ and let $j(1),\ldots,j(k)\in\{1,\ldots,n\}$.  \newline Let $(\mathcal C,G)\in \{(P,S_n^\ee),(P_{\even},H_n^\ee),(P_{1,2},\mathring B_n^\ee),(P_2,\mathring O_n^\ee)\}$ and let the cumulants of the distributions of $x_1,\ldots,x_n$ be according to the assumptions in (a), (b), (c) or (d) respectively.
 By the moment-cumulant formula (Proposition \ref{MCFormula}) and our assumptions on the cumulants we have:
\[\phi(x_{j(1)}\cdots x_{j(k)})=\sum_{\pi\in NC_{\mathcal C} ^\ee[j]}\kappa_\pi(x_{j(1)},\ldots, x_{j(k)})\]
For $\pi\in NC_{\mathcal C}^\ee[j]$, the cumulant $\kappa_\pi(x_{j(1)},\ldots,x_{j(k)})$ factorizes according to the blocks of $\pi$ (see \cite{SpW}) and on each such block the indices $j(l)$ coincide, since $\pi\leq \ker j$. Now, $x_1,\ldots,x_n$ are identically distributed, thus those single block cumulants do not depend on the index $j$, and hence nor does $\kappa_\pi$. Therefore, we put $\kappa_\pi:=\kappa_\pi(x_{j(1)},\ldots, x_{j(k)})$ for any $j$ with $\pi\leq \ker j$. We then compute, using Proposition \ref{CorIntertwiner}:
\begin{align*}
\sum_{i(1),\ldots,i(k)}&\phi(x_{i(1)}\cdots x_{i(k)})u_{i(1)j(1)}\cdots u_{i(k)j(k)}\\
&=\sum_{i(1),\ldots,i(k)}\left(\sum_{\pi\in NC_{\mathcal C}^\ee[i]}\kappa_\pi(x_{i(1)},\ldots, x_{i(k)})\right)
u_{i(1)j(1)}\cdots u_{i(k)j(k)}\\
&=\sum_{i(1),\ldots,i(k)}\left(\sum_{\pi\in P(k)}\delta_{\pi\in NC_{\mathcal C}^\ee[i]}\kappa_\pi\right)
u_{i(1)j(1)}\cdots u_{i(k)j(k)}\\
&=\sum_{\pi\in P(k)}\kappa_\pi\sum_{i(1),\ldots,i(k)}\delta_{\pi\in NC_{\mathcal C}^\ee[i]}
u_{i(1)j(1)}\cdots u_{i(k)j(k)}\\
&=\sum_{\pi\in P(k)}\kappa_\pi\delta_{\pi\in NC_{\mathcal C}^\ee[j]}\\
&=\phi(x_{j(1)}\cdots x_{j(k)})
\end{align*}
Thus, the distribution of $x_1,\ldots,x_n$ is invariant under $G$.
\end{proof}

\begin{remark}
Distributional invariance under $S_n^+$ is also called \emph{quantum exchangeability}, while invariance under $S_n$ is called \emph{exchangeability}. The former one implies the latter one \cite{SpK}.  In our case, invariance under $S_n^\ee$ (which we could call \emph{$\ee$-quantum exchangeability}) does \emph{not} imply exchangeability. We only obtain invariance under $T_n^\ee$ (which we could call \emph{$\ee$-exchangeability}), i.e.:
\[\phi(x_{j(1)}\cdots x_{j(k)})=\phi(x_{\sigma(j(1))}\cdots x_{\sigma(j(k))})\qquad\forall\sigma\in T_n^\ee\]
Since exchangeability of variables implies that they are identically distributed, and since we only have $\ee$-exchangeability in our case, it is likely that one can weaken the assumptions of our de Finetti theorem.
\end{remark}

\section{Partition calculus and intertwiners}

As already mentioned in Section \ref{SectIntertwinersSn}, Woronowicz's concept of intertwiner spaces gives a Tannaka-Krein type approach to compact matrix quantum groups. Moreover, the calculus with intertwiners provides a fairly easy way of deducing $C^*$-algebraic relations from others. The concept of easy quantum groups as introduced by Banica and Speicher \cite{BS} offers yet another simplification of the intertwiner calculus: For any easy quantum group, its intertwiner space is spanned by maps indexed by partitions as in Definition \ref{DefTpi}. For $\ee$-versions of easy quantum groups, the situation is a bit more delicate and we cannot give a partition approach in general. However, we may at least provide some access to the intertwiner spaces using modified partitions which we will now develop in three steps.

\subsection{Expressing relations as intertwiners}\label{SectRelInt}

Let us introduce certain linear maps extending Definition \ref{DefRCross}.

\begin{definition}
For $n\in \NN$ we define the following linear maps from $(\CC^n)^{\otimes 2}$ to $(\CC^n)^{\otimes 2}$.
\begin{align*}
R^1_{\crosspart}(e_i\otimes e_j)&:=\delta_{{\ee_{ij}=1}}e_j\otimes e_i\\
R^1_{\ididpart}(e_i\otimes e_j)&:=\delta_{{\ee_{ij}=1}}e_i\otimes e_j\\
R^0_{\ididpart}(e_i\otimes e_j)&:=\delta_{{\ee_{ij}=0}}e_i\otimes e_j\\
R^0_{\paarbaarpart}(e_i\otimes e_j)&:=\delta_{ij}\sum_k\delta_{{\ee_{ik}=0}}e_k\otimes e_k
\end{align*}
\end{definition}

We will now describe the intertwiners implementing relations such as $R^\ee$ and $\mathring R^\ee$ which we recall here (Def. \ref{DefEspOn}, \ref{DefRRing} and \ref{DefAut}):
\begin{align*}
&(R^\ee1) \quad &&u_{ik}u_{jl}=u_{jl}u_{ik}\textnormal{ if } \ee_{ij}=1 \textnormal{ and } \ee_{kl}=1\\
&(R^\ee2) \quad &&u_{ik}u_{jl}=u_{jk}u_{il}\textnormal{ if }\ee_{ij}=1\textnormal{ and }\ee_{kl}=0\\
& \quad &&\textnormal{and }u_{ik}u_{jl}=u_{il}u_{jk}\textnormal{ if }\ee_{ij}=0\textnormal{ and }\ee_{kl}=1\\
&(\mathring R^\ee2) \quad &&\delta_{\ee_{kl}=0}u_{ik}u_{jl}=\delta_{\ee_{ij}=0}u_{ik}u_{jl}\\
&(R^\ee)&&(R^\ee1)\textnormal{ and }(R^\ee2)\\
&(\mathring R^\ee)&&(R^\ee1)\textnormal{ and }(\mathring R^\ee2)\\
&(R_{\aut})\quad&&\sum_k \delta_{\ee_{kl}=1}u_{ik}=\sum_j \delta_{\ee_{ij}=1}u_{jl}
&&\qquad\qquad\qquad\qquad
\end{align*}
Moreover, we define:
\begin{align*}
&(R'^\ee2) \quad &&u_{ik}u_{jl}=\delta_{kl}\sum_m\delta_{\ee_{km}=0}u_{jm}u_{im}\textnormal{ if } \ee_{ij}=1 \textnormal{ and }\ee_{kl}=0\\
&   &&\textnormal{and }u_{ik}u_{jl}=\delta_{ij}\sum_m\delta_{\ee_{im}=0}u_{ml}u_{mk}\textnormal{ if } \ee_{ij}=0 \textnormal{ and }\ee_{kl}=1
&&\qquad\qquad\qquad\qquad\\
&(R'^\ee)&&(R^\ee1)\textnormal{ and }(R'^\ee2)
\end{align*}

\begin{lemma}\label{LemREpsIntertwiner}
Let $G\subset O_n^+$ be a compact matrix quantum group. 
\begin{itemize}
\item[(a)]  $R^1_{\crosspart}+R^0_{\ididpart}$ is an intertwiner of $G$ if and only if ($R^\ee$) holds.
\item[(b)]  $R^1_{\crosspart}+R^0_{\paarbaarpart}$ is an intertwiner of $G$ if and only if ($R'^\ee$) holds
\item[(c)]  $R^1_{\crosspart}$ is an intertwiner of $G$ if and only if ($\mathring R^\ee$) holds.
\item[(d)] $R^0_{\ididpart}$ is an intertwiner of $G$ if and only if ($\mathring R^\ee2$) holds.
\item[(e)] $\ee$ is an intertwiner of $G$ (i.e. $u\ee=\ee u$) if and only if ($R_{\aut}$) holds.
\end{itemize}
\end{lemma}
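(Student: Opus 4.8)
The plan is to test each intertwiner relation $Ru^{\otimes 2}=u^{\otimes 2}R$ directly on basis vectors, exactly in the style of the proof of Lemma~\ref{LemRRingEpsIntertwiner}. For the degree-two maps in (a)--(d) I would compute the two compositions $u^{\otimes 2}R(e_l\otimes e_k)$ and $Ru^{\otimes 2}(e_l\otimes e_k)$, expand both in the basis $\{e_i\otimes e_j\}$ using $u^{\otimes 2}(e_a\otimes e_b)=\sum_{i,j}u_{ia}u_{jb}\otimes e_i\otimes e_j$, and compare the coefficient of $e_i\otimes e_j$ on each side. After relabelling summation indices and using $\ee=\ee^t$, the condition that $R$ be an intertwiner becomes a single identity among the products $u_{ik}u_{jl}$ valid for every quadruple $(i,j,k,l)$; the factors $\delta_{\ee_{ab}=0}$ and $\delta_{\ee_{ab}=1}$ built into $R$ then split this identity into cases according to the values of $\ee_{ij}$ and $\ee_{kl}$. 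Two parts need no real work: (c) is precisely Lemma~\ref{LemRRingEpsIntertwiner} and I would simply cite it, while for (e) the assertion that $\ee$ is a degree-one intertwiner is literally the matrix identity $u\ee=\ee u$, whose $(i,l)$-entry reads $\sum_k\delta_{\ee_{kl}=1}u_{ik}=\sum_j\delta_{\ee_{ij}=1}u_{jl}$, i.e.\ $(R_{\aut})$.

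For (a), applying the recipe to $R^1_{\crosspart}+R^0_{\ididpart}$ yields the identity
\[\delta_{\ee_{kl}=1}u_{ik}u_{jl}+\delta_{\ee_{kl}=0}u_{il}u_{jk}=\delta_{\ee_{ij}=1}u_{jl}u_{ik}+\delta_{\ee_{ij}=0}u_{il}u_{jk}.\]
Splitting into the four cases $(\ee_{ij},\ee_{kl})\in\{0,1\}^2$ reproduces $(R^\ee1)$ (case $1,1$), the two halves of $(R^\ee2)$ (cases $1,0$ and $0,1$, the first after a $k\leftrightarrow l$ relabelling), and a trivial identity (case $0,0$); this is exactly $(R^\ee)$. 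Part (d), with $R=R^0_{\ididpart}$ alone, is shorter: the coefficient comparison gives
\[\delta_{\ee_{kl}=0}u_{il}u_{jk}=\delta_{\ee_{ij}=0}u_{il}u_{jk},\]
and the relabelling $k\leftrightarrow l$ turns this into $(\mathring R^\ee2)$.

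The main obstacle is part (b), where $R^0_{\paarbaarpart}$ replaces $R^0_{\ididpart}$. Since $R^0_{\paarbaarpart}(e_i\otimes e_j)=\delta_{ij}\sum_m\delta_{\ee_{im}=0}e_m\otimes e_m$ first contracts to the diagonal with a sum over $m$ and then $u^{\otimes 2}$ spreads it back over all $e_i\otimes e_j$, the resulting relations acquire the summed shape of $(R'^\ee2)$. Here I would treat the off-diagonal coefficients ($i\neq j$) and the diagonal coefficients ($i=j$) separately: the off-diagonal comparison directly produces $(R^\ee1)$ and the two halves of $(R'^\ee2)$ (again up to the symmetric $i\leftrightarrow j$ relabelling), while the case $\ee_{ij}=\ee_{kl}=0$ with $k=l$ forces the auxiliary relation $\sum_m\delta_{\ee_{km}=0}u_{im}u_{jm}=0$, and the diagonal comparison yields a further family involving $\sum_a\delta_{\ee_{am}=0}u_{al}u_{ak}$. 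The delicate point, for the ``if'' direction, is to show that these extra relations are already forced by $(R'^\ee)$ together with the orthogonality $\sum_m u_{im}u_{jm}=\delta_{ij}$ available in $G\subset O_n^+$; conversely one must check that the ``only if'' direction extracts nothing beyond $(R'^\ee)$. Reconciling the summed relations against the orthogonality relations of $u$ is where the genuine care is needed, whereas the remaining parts are mechanical coefficient bookkeeping.
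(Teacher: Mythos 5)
Your overall strategy is the paper's own: evaluate both compositions on basis vectors and compare coefficients of $e_i\otimes e_j$. Parts (a), (c), (d), (e) are handled exactly as in the paper and your case analysis there is correct (including the $k\leftrightarrow l$ relabellings and the observation that the case $\ee_{ij}=\ee_{kl}=0$ in (a) is vacuous).

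The one genuine gap is that part (b) is not finished. You correctly isolate the residual case $\ee_{ij}=\ee_{kl}=0$, which produces the summed relations $\sum_m\delta_{\ee_{km}=0}u_{im}u_{jm}=0$ (for $k=l$, $i\neq j$) and $\sum_m\delta_{\ee_{im}=0}u_{ml}u_{mk}=0$ (for $i=j$, $k\neq l$) on top of $(R'^\ee)$, and you state that one must check these are forced by $(R'^\ee)$ together with orthogonality --- but you do not carry out that check, and it is not routine: orthogonality reduces, say, the second relation to $\sum_m\delta_{\ee_{im}=1}u_{ml}u_{mk}=0$, and the products $u_{ml}u_{mk}$ appearing there have \emph{equal} row indices, so neither half of $(R'^\ee2)$ applies to them directly (since $\ee_{mm}=0$). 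To be fair, the paper's proof of (b) also stops after displaying the two compositions and never addresses this case; but the paper has a clean alternative route that you should adopt to close the argument: sandwiching as in Lemma \ref{LemIntertwinerEq}(b), i.e.
\[
\left(T_{\baarpart}\otimes\id\otimes\id\right)\left(\id\otimes\left(R^1_{\crosspart}+R^0_{\ididpart}\right)\otimes\id\right)\left(\id\otimes\id\otimes T_{\paarpart}\right)=R^1_{\crosspart}+R^0_{\paarbaarpart},
\]
together with a similar identity in the reverse direction, shows that $R^1_{\crosspart}+R^0_{\paarbaarpart}$ is an intertwiner of $G\subset O_n^+$ if and only if $R^1_{\crosspart}+R^0_{\ididpart}$ is. This reduces (b) to (a) plus the purely algebraic equivalence of $(R^\ee)$ and $(R'^\ee)$, and avoids having to reconcile the summed relations against orthogonality by hand. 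I recommend you either insert that reduction or explicitly verify the residual relations; as written, the ``if'' direction of (b) is asserted rather than proved.
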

\begin{proof}
(a) We first compute:
\begin{align*}
&u^{\otimes 2}\left(R^1_{\crosspart}+R^0_{\ididpart}\right)(e_l\otimes e_k)\\
&=u^{\otimes 2}\left(\delta_{\ee_{kl}=1}e_k\otimes e_l+\delta_{\ee_{kl}=0}e_l\otimes e_k\right)\\
&=\sum_{i,j}\left(\delta_{\ee_{kl}=1}u_{ik}u_{jl}+\delta_{\ee_{kl}=0}u_{il}u_{jk}\right)\otimes e_i\otimes e_j
\end{align*}
And:
\begin{align*}
&\left(R^1_{\crosspart}+R^0_{\ididpart}\right)u^{\otimes 2}(e_l\otimes e_k)\\
&=\sum_{i,j}\left(u_{jl}u_{ik}\otimes R^1_{\crosspart}(e_j\otimes e_i)+
u_{il}u_{jk}\otimes R^0_{\ididpart}(e_i\otimes e_j)\right)\\
&=\sum_{i,j}\left(\delta_{\ee_{ij}=1}u_{jl}u_{ik}+\delta_{\ee_{ij}=0}u_{il}u_{jk}\right)\otimes e_i\otimes e_j
\end{align*}
We infer that $R^1_{\crosspart}+R^0_{\ididpart}$ is an intertwiner if and only if for all $i,j,k,l$:
\[\delta_{\ee_{kl}=1}u_{ik}u_{jl}+\delta_{\ee_{kl}=0}u_{il}u_{jk}
=\delta_{\ee_{ij}=1}u_{jl}u_{ik}+\delta_{\ee_{ij}=0}u_{il}u_{jk}\]
These relations are equivalent to $R^\ee$.

(b) We proceed like in (a) by computing:
\begin{align*}
&u^{\otimes 2}\left(R^1_{\crosspart}+R^0_{\paarbaarpart}\right)(e_l\otimes e_k)\\
&=u^{\otimes 2}\left(\delta_{\ee_{kl}=1}e_k\otimes e_l+\delta_{kl}\sum_m\delta_{{\ee_{km}=0}}e_m\otimes e_m\right)\\
&=\sum_{i,j}\left(\delta_{\ee_{kl}=1}u_{ik}u_{jl}+
\delta_{kl}\sum_m\delta_{{\ee_{km}=0}}u_{im}u_{jm}\right)\otimes e_i\otimes e_j
\end{align*}
And:
\begin{align*}
&\left(R^1_{\crosspart}+R^0_{\paarbaarpart}\right)u^{\otimes 2}(e_l\otimes e_k)\\
&=\sum_{i,j}u_{jl}u_{ik}\otimes R^1_{\crosspart}(e_j\otimes e_i)
+\sum_{p,m}u_{pl}u_{mk}\otimes R^0_{\paarbaarpart}(e_p\otimes e_m)\\
&=\sum_{i,j}\delta_{\ee_{ij}=1}u_{jl}u_{ik}\otimes e_i\otimes e_j
+\sum_{p,m,i}\delta_{pm}\delta_{\ee_{im=0}}u_{pl}u_{mk}\otimes e_i\otimes e_i\\
&=\sum_{i,j}\delta_{\ee_{ij}=1}u_{jl}u_{ik}\otimes e_i\otimes e_j
+\sum_{i,m}\delta_{\ee_{im=0}}u_{ml}u_{mk}\otimes e_i\otimes e_i\\
&=\sum_{i,j}\left(\delta_{\ee_{ij}=1}u_{jl}u_{ik}
+\delta_{ij}\sum_{m}\delta_{\ee_{im=0}}u_{ml}u_{mk}\right)\otimes e_i\otimes e_j
\end{align*}
Assertion (c) is the contents of Lemma \ref{LemRRingEpsIntertwiner}, and (d) and (e) are straightforward.
\end{proof}

We infer that any of the above relations in Lemma \ref{LemREpsIntertwiner} may be implemented by intertwiners. Hence, each of them passes through the comultiplication map $\Delta$ of compact matrix quantum groups. This means, that we may define quantum groups satisfying these relations. In this sense, the above Lemma \ref{LemREpsIntertwiner} gives a more systematic proof of Lemma \ref{LemOnQG} and  Lemma \ref{LemRZwei}. Moreover, using the building blocks ($R^\ee1$), ($\mathring R^\ee2$), ($R^\ee2$) and ($R_{\aut}$), we may define possibly new quantum subgroups of $O_n^+$ by quotienting out the following relations:
\begin{itemize}
\item only ($\mathring R^\ee2$)
\item only ($R_{\aut}$)
\item ($\mathring R^\ee2$) together with ($R_{\aut}$)
\item ($\mathring R^\ee$) together with ($R_{\aut}$)
\item ($R^\ee$) together with ($R_{\aut}$)
\end{itemize}
It is not clear, whether the relations ($R^\ee1$) and ($R^\ee2$) may be expressed separately by intertwiners, so we don't know whether we may define quantum groups by using these relations separately.

\subsection{Equivalence of relations by intertwiner calculus}\label{EqRelInt}

Having expressed our relations by intertwiners, it is very easy to deduce some relations from others or even to show their equivalence. All we need to show is that we may construct certain intertwiners from others using the operations of intertwiner spaces \cite{WoTK}:
\begin{itemize}
\item If $S$ and $T$ are intertwiners of $G$, so are $S\otimes T$, $ST$ and  $T^*$, as well as $\alpha S+\beta T$, for $\alpha,\beta\in\CC$.
\item The identity map $\id:\CC^n\to\CC^n$ is an intertwiner of $G$.
\item If $G\subset O_n^+$, then the maps $T_{\paarpart}$ and $T_{\baarpart}$ are intertwiners of $G$ (see for instance \cite[Lemma 2.5]{Web}).
\end{itemize}

Recall from Definition \ref{DefTpi} the definition of $T_\pi$, where $\pi$ is some partition.

\begin{lemma}\label{LemIntertwinerEq}
Let $G\subset O_n^+$ be a quantum subgroup of $O_n^+$. Concerning intertwiners in $G$, we have:
\begin{itemize}
\item[(a)] $R^1_{\crosspart}$ is an intertwiner if and only if $R^0_{\ididpart}$ and $R^1_{\crosspart}+R^0_{\ididpart}$ are.
\item[(b)] $R^1_{\crosspart}+R^0_{\ididpart}$ is an intertwiner  if and only if $R^1_{\crosspart}+R^0_{\paarbaarpart}$ is.
\item[(c)] $R^0_{\ididpart}$ is an intertwiner  if and only if $R^0_{\paarbaarpart}$ is.
\item[(d)] Let $T_{\vierpartrot}$ be an intertwiner of $G$. 

Then $R^1_{\crosspart}+R^0_{\ididpart}$ is an intertwiner  if and only if $R^1_{\crosspart}$ is.
\item[(e)] Let $T_{\dreipartrot}$ be an intertwiner of $G$. 

Then $R^0_{\ididpart}$ is an intertwiner if and only if $\ee$ is.
\end{itemize}
\end{lemma}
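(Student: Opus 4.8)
The plan is to handle (a)--(c) by pure intertwiner calculus and (d)--(e) by translating the intertwiner conditions into the algebraic relations via Lemma~\ref{LemREpsIntertwiner} and then quoting Lemmas~\ref{LemREpsAeqRRing} and~\ref{LemRelSnEps}. Throughout I would use that the intertwiner space of $G\subset O_n^+$ is closed under composition, tensor products, adjoints and linear combinations, and that $\id$, $T_{\paarpart}$ and $T_{\baarpart}$ are intertwiners.

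For (a) the two algebraic facts I would record first are $R^1_{\ididpart}+R^0_{\ididpart}=\id^{\otimes2}$ and, using that $\ee$ is symmetric, $(R^1_{\crosspart})^2=R^1_{\ididpart}$. If $R^0_{\ididpart}$ and $R^1_{\crosspart}+R^0_{\ididpart}$ are intertwiners, then so is their difference $R^1_{\crosspart}$. Conversely, if $R^1_{\crosspart}$ is an intertwiner, then $R^1_{\ididpart}=(R^1_{\crosspart})^2$ is one as well, hence so is $R^0_{\ididpart}=\id^{\otimes2}-R^1_{\ididpart}$ and therefore also the sum $R^1_{\crosspart}+R^0_{\ididpart}$.

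The heart of (b) and (c) is a rotation. For (c) I would verify the two composites
\[R^0_{\paarbaarpart}=(T_{\paarpart}\otimes\id^{\otimes2})\,(\id\otimes R^0_{\ididpart}\otimes\id)\,(\id^{\otimes2}\otimes T_{\baarpart}),\]
\[R^0_{\ididpart}=(\id^{\otimes2}\otimes T_{\paarpart})\,(\id\otimes R^0_{\paarbaarpart}\otimes\id)\,(T_{\baarpart}\otimes\id^{\otimes2}),\]
by evaluating each side on $e_i\otimes e_j$: the first creates a cup, runs the middle two legs through $R^0_{\ididpart}$, and caps the outer legs, producing exactly the decoration $\delta_{\ee_{ik}=0}$ of $R^0_{\paarbaarpart}$, while the second reverses this. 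Since $T_{\paarpart}$ and $T_{\baarpart}$ are intertwiners, each composite carries an intertwiner to an intertwiner, and as the two are mutually inverse on these maps, (c) follows. For (b) I would apply the same rotation to $R^1_{\crosspart}+R^0_{\ididpart}$; the decisive point is that the rotation \emph{fixes} $R^1_{\crosspart}$,
\[(T_{\paarpart}\otimes\id^{\otimes2})\,(\id\otimes R^1_{\crosspart}\otimes\id)\,(\id^{\otimes2}\otimes T_{\baarpart})=R^1_{\crosspart},\]
and likewise for the inverse rotation, again by symmetry of $\ee$. Hence the rotation sends $R^1_{\crosspart}+R^0_{\ididpart}$ to $R^1_{\crosspart}+R^0_{\paarbaarpart}$ and its inverse sends it back, giving the equivalence.

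For (d) and (e) I would first identify what the extra intertwiners encode. A computation as in Lemma~\ref{LemREpsIntertwiner} shows that $T_{\vierpartrot}$, the single-block map $e_i\otimes e_j\mapsto\delta_{ij}e_i\otimes e_i$, is an intertwiner of $G$ precisely when $u_{ik}u_{jk}=u_{ki}u_{kj}=0$ for all $i\neq j$ and all $k$, and that $T_{\dreipartrot}$, the three-point block, is an intertwiner precisely when $G\subset S_n^+$ (the three-point block generates the category $NC$ of $S_n^+$). In (d), the implication from $R^1_{\crosspart}$ to $R^1_{\crosspart}+R^0_{\ididpart}$ is already contained in (a); for the converse, Lemma~\ref{LemREpsIntertwiner} turns the hypothesis into the statement that $R^\ee$ holds, and the orthogonality relations supplied by $T_{\vierpartrot}$ are exactly the hypothesis of Lemma~\ref{LemREpsAeqRRing}(b), which upgrades $R^\ee$ to $\mathring R^\ee$, i.e. to $R^1_{\crosspart}$ being an intertwiner. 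In (e), Lemma~\ref{LemREpsIntertwiner} identifies ``$R^0_{\ididpart}$ is an intertwiner'' with $(\mathring R^\ee 2)$ and ``$\ee$ is an intertwiner'' with $(R_{\aut})$; since $T_{\dreipartrot}$ forces $G\subset S_n^+$, the equivalence of these two relations is precisely the equivalence of (ii) and (iii) in Lemma~\ref{LemRelSnEps}.

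I expect the main obstacle to be the rotation bookkeeping in (b)--(c): one must place the cup and cap so that the contraction yields exactly the $\delta_{\ee=0}$-decorated double bar, and then confirm that this same rotation leaves $R^1_{\crosspart}$ invariant. Once the fixed-point property of $R^1_{\crosspart}$ and the mutual invertibility of the two rotations are in hand, all five equivalences drop out by linearity together with the translations of Lemma~\ref{LemREpsIntertwiner}.
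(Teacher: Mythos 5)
Your argument is correct, and parts (a)--(c) essentially coincide with the paper's proof: (a) rests on the identity $R^0_{\ididpart}=\id^{\otimes 2}-\bigl(R^1_{\crosspart}\bigr)^2$, and (b)--(c) on the cup--cap rotation, for which you even write out the inverse rotation that the paper dismisses with ``a similar computation''. (Two cosmetic points: your phrase ``caps the outer legs'' should say ``caps the two leftmost legs'', which is what your displayed composite actually does; and you use $T_{\baarpart}$ for the cup $\CC\to(\CC^n)^{\otimes 2}$ where the paper uses $T_{\paarpart}$ --- immaterial, since both are intertwiners of any $G\subset O_n^+$.) Where you genuinely diverge is in (d) and (e). The paper stays entirely inside the intertwiner calculus: for (d) it exploits the identity $\bigl(R^1_{\crosspart}+R^0_{\paarbaarpart}\bigr)T_{\vierpartrot}=R^0_{\paarbaarpart}$ (which uses $\ee_{ii}=0$) together with (a)--(c), and for (e) it conjugates $R^0_{\paarbaarpart}$ by $T_{\dreipartrot}$ to pass between $R^0_{\paarbaarpart}$ and the matrix $\ee$. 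You instead translate the intertwiner conditions into $C^*$-relations via Lemma \ref{LemREpsIntertwiner} and then quote the algebraic Lemmas \ref{LemREpsAeqRRing}(b) and \ref{LemRelSnEps}. This is logically sound, since those lemmas are proved earlier by direct computation and do not depend on the present one; but it reverses the intended direction of the section --- the lemma immediately following this one is meant to \emph{recover} Lemmas \ref{LemREpsAeqRRing} and \ref{LemRelSnEps} as corollaries of the intertwiner calculus, which your route would make circular if kept. The paper's version buys a self-contained diagrammatic argument (the advertised point of the section); yours buys brevity by leaning on the earlier algebra.
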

\begin{proof}
(a) We have:
\[R^0_{\ididpart}=(\id\otimes\id)-\left(R^1_{\crosspart}\right)^2\]

(b) We compute:
\begin{align*}
&\left(T_{\baarpart}\otimes\id\otimes\id\right)
\left(\id\otimes \left(R^1_{\crosspart}+R^0_{\ididpart}\right)\otimes\id\right)
\left(\id\otimes\id\otimes T_{\paarpart}\right)(e_i\otimes e_j)\\
&=\sum_k\left(T_{\baarpart}\otimes\id\otimes\id\right)
\left(\id\otimes \left(R^1_{\crosspart}+R^0_{\ididpart}\right)\otimes\id\right)
(e_i\otimes e_j\otimes e_k\otimes e_k)\\
&=\sum_k\left(T_{\baarpart}\otimes\id\otimes\id\right)
(\delta_{\ee_{kj}=1}e_i\otimes e_k\otimes e_j\otimes e_k+\delta_{\ee_{kj}=0}e_i\otimes e_j\otimes e_k\otimes e_k)\\
&=\sum_k\delta_{\ee_{kj}=1}\delta_{ik}e_j\otimes e_k+\sum_k\delta_{\ee_{kj}=0}\delta_{ij}e_k\otimes e_k\\
&=\left(R^1_{\crosspart}+R^0_{\paarbaarpart}\right)(e_i\otimes e_j)
\end{align*}
Thus, if $R^1_{\crosspart}+R^0_{\ididpart}$ is an intertwiner, so is $R^1_{\crosspart}+R^0_{\paarbaarpart}$. A similar computation shows the converse.

(c) Omitting $R^1_{\crosspart}$ in the proof of (b), we obtain the proof of (c).

(d) If $R^1_{\crosspart}+R^0_{\ididpart}$ is an intertwiner of $G$, then $R^1_{\crosspart}+R^0_{\paarbaarpart}$, too, by (b). From
\[\left(R^1_{\crosspart}+R^0_{\paarbaarpart}\right)T_{\vierpartrot}=R^0_{\paarbaarpart}\]
and  (c), we infer that $R^0_{\ididpart}$ is an intertwiner of $G$.

(e) If $R^0_{\ididpart}$ is an intertwiner of $G$, then $R^0_{\paarbaarpart}$, too, by (c). We compute:
\[T_{\dreipartrot}R^0_{\paarbaarpart}T_{\dreipartrot}^*(e_i)=\sum_k\delta_{\ee_{ik}=0}e_k\]
Thus, $\ee=\id-T_{\dreipartrot}R^0_{\paarbaarpart}T_{\dreipartrot}^*$ is an intertwiner of $G$. Conversely, if $\ee$ is an intertwiner of $G$, then also
\[R^0_{\paarbaarpart}=T_{\dreipartrot}^*(\id-\ee)T_{\dreipartrot}\]
is an intertwiner of $G$.
\end{proof}

Using the above lemma, we immediately see the consequences for the relations, recovering results from Lemma \ref{LemREpsAeqRRing} and Lemma \ref{LemRelSnEps}.
\begin{lemma}
Let $G\subset O_n^+$ be a quantum subgroup of $O_n^+$.
\begin{itemize}
\item[(a)] The relations ($\mathring R^\ee$) imply ($R^\ee$).
\item[(b)] The relations ($R^\ee$) hold if and only if the relations ($R'^\ee$) hold.
\item[(c)] If $u_{ik}u_{jk}=u_{ki}u_{kj}=0$ for all $i\neq j$, then ($\mathring R^\ee$) and ($R^\ee$) are equivalent.
\item[(d)] If $G\subset S_n^+$, then ($\mathring R^\ee2$)  and ($R_{\aut}$) are equivalent.
\end{itemize}
\end{lemma}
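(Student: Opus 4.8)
The plan is to avoid redoing any $C^*$-algebraic computation and instead read off all four claims from the two preceding lemmas: Lemma~\ref{LemREpsIntertwiner}, which is a dictionary translating each relation into the assertion that a specific map is an intertwiner of $G$, and Lemma~\ref{LemIntertwinerEq}, which records equivalences \emph{among} those maps using only the operations available in an intertwiner space. Once the relations are phrased as intertwiners, every statement becomes a short chain of the two cited lemmas; the only genuine work will be to recognise, in parts (c) and (d), that the extra hypotheses are exactly what is needed to supply the auxiliary intertwiners $T_{\vierpartrot}$ and $T_{\dreipartrot}$ that Lemma~\ref{LemIntertwinerEq}(d),(e) require.

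For (a) I would argue as follows. By Lemma~\ref{LemREpsIntertwiner}(c) the relations $(\mathring R^\ee)$ say precisely that $R^1_{\crosspart}$ is an intertwiner of $G$; by Lemma~\ref{LemIntertwinerEq}(a) this forces $R^1_{\crosspart}+R^0_{\ididpart}$ to be an intertwiner as well, and by Lemma~\ref{LemREpsIntertwiner}(a) that is exactly $(R^\ee)$. For (b) the same dictionary gives that $(R^\ee)$ holds if and only if $R^1_{\crosspart}+R^0_{\ididpart}$ is an intertwiner (Lemma~\ref{LemREpsIntertwiner}(a)), while Lemma~\ref{LemIntertwinerEq}(b) turns this into the statement that $R^1_{\crosspart}+R^0_{\paarbaarpart}$ is an intertwiner, which by Lemma~\ref{LemREpsIntertwiner}(b) is $(R'^\ee)$; reading the equivalences backwards gives the converse.

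For (c), one implication is already (a). For the converse I would first observe that the hypothesis $u_{ik}u_{jk}=u_{ki}u_{kj}=0$ for all $i\neq j$ is equivalent to $T_{\vierpartrot}$ being an intertwiner of $G$: since $T_{\vierpartrot}(e_i\otimes e_j)=\delta_{ij}e_i\otimes e_i$, comparing $u^{\otimes 2}T_{\vierpartrot}$ with $T_{\vierpartrot}u^{\otimes 2}$ yields, after separating the $p=q$ and $p\neq q$ terms, exactly the same-row and same-column vanishing conditions (the diagonal contributions being automatic from orthogonality). With $T_{\vierpartrot}$ available, Lemma~\ref{LemIntertwinerEq}(d) makes ``$R^1_{\crosspart}+R^0_{\ididpart}$ is an intertwiner'' and ``$R^1_{\crosspart}$ is an intertwiner'' equivalent, i.e.\ $(R^\ee)$ and $(\mathring R^\ee)$ equivalent via Lemma~\ref{LemREpsIntertwiner}(a),(c). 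For (d), the point is that $G\subset S_n^+$ guarantees that every $T_\pi$ with $\pi$ noncrossing is an intertwiner of $G$ (intertwiners of $S_n^+$ remain intertwiners of its quantum subgroups), and in particular the three-block map $T_{\dreipartrot}$ is. Lemma~\ref{LemIntertwinerEq}(e) then equates ``$R^0_{\ididpart}$ is an intertwiner'' with ``$\ee$ is an intertwiner'', which by Lemma~\ref{LemREpsIntertwiner}(d),(e) is the desired equivalence of $(\mathring R^\ee2)$ and $(R_{\aut})$.

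The hard part is essentially bookkeeping: keeping straight which map in Lemma~\ref{LemREpsIntertwiner} corresponds to which relation and in which direction Lemma~\ref{LemIntertwinerEq} is applied. The single substantive computation is the identification in (c) of the row/column vanishing relations with the intertwiner property of $T_{\vierpartrot}$; here one must use the correct convention that $T_{\vierpartrot}$ is the four-block partition in $P(2,2)$, so that its output is $\delta_{ij}e_i\otimes e_i$ rather than a sum over a free index, as otherwise the relations would collapse to ones already valid in $O_n^+$.
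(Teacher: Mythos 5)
Your proposal is correct and follows exactly the paper's route: the paper's proof consists precisely of noting that the hypothesis in (c) is equivalent to $T_{\vierpartrot}$ being an intertwiner, that $G\subset S_n^+$ is equivalent to $T_{\dreipartrot}$ being an intertwiner, and then invoking Lemma~\ref{LemIntertwinerEq} together with Lemma~\ref{LemREpsIntertwiner}. You merely spell out the chains of citations (and the small $T_{\vierpartrot}$ computation, which the paper outsources to a reference) that the paper leaves implicit.
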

\begin{proof}
The relations $u_{ik}u_{jk}=u_{ki}u_{kj}=0$ for all $i\neq j$ are equivalent to the fact that $T_{\vierpartrot}$ is an intertwiner (see for instance \cite[Lemma 2.5]{Web}). Moreover, one can check that $T_{\dreipartrot}$ is an intertwiner if and only if $G\subset S_n^+$. We then use  Lemma \ref{LemIntertwinerEq} together with Lemma \ref{LemREpsIntertwiner}.
\end{proof}

\subsection{Further relations that hold in $O_n^\ee$}

\begin{proposition}\label{PropRefinedOn}
The relations $R'^{\ee}$ hold in $O_n^\ee$. In particular:
\begin{align*}
&\ee_{ij}=1,\ee_{kl}=0,k\neq l: &&u_{ik}u_{jl}=0\textnormal{ and }u_{ki}u_{lj}=0\\
&\ee_{ij}=1:\qquad && \sum_{m\neq k\textnormal{ and }\ee_{km}=0} u_{im}u_{jm}=\sum_{m= k\textnormal{ or }\ee_{km}=1} u_{im}u_{jm}=0\\
& && \sum_{m\neq k\textnormal{ and }\ee_{km}=0} u_{mi}u_{mj}=\sum_{m= k\textnormal{ or }\ee_{km}=1} u_{mi}u_{mj}=0
\end{align*}
\end{proposition}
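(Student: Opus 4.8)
The plan is to derive the relations $R'^\ee$ in $O_n^\ee$ by a short intertwiner argument resting on the lemmas of the two preceding subsections, and then to read off the explicit identities in the ``in particular'' part by specialising $(R'^\ee2)$ and combining it with orthogonality and the partial commutation coming from $R^\ee$.

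First I would note that $R'^\ee$ consists of $(R^\ee1)$ and $(R'^\ee2)$, and that $(R^\ee1)$ is already part of the defining relations $R^\ee$ of $O_n^\ee$, so the content lies in $(R'^\ee2)$. Since $O_n^\ee\subset O_n^+$ and $R^\ee$ holds in $O_n^\ee$ by Definition~\ref{DefEspOn}, Lemma~\ref{LemREpsIntertwiner}(a) shows that $R^1_{\crosspart}+R^0_{\ididpart}$ is an intertwiner of $O_n^\ee$. By Lemma~\ref{LemIntertwinerEq}(b) this is equivalent to $R^1_{\crosspart}+R^0_{\paarbaarpart}$ being an intertwiner, and Lemma~\ref{LemREpsIntertwiner}(b) identifies the latter with $R'^\ee$ holding in $O_n^\ee$. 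This settles the first assertion.

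For the explicit relations I would specialise $(R'^\ee2)$. Its first line states that for $\ee_{ij}=1$ and $\ee_{kl}=0$ one has $u_{ik}u_{jl}=\delta_{kl}\sum_m\delta_{\ee_{km}=0}u_{jm}u_{im}$; choosing $k\neq l$ makes the prefactor $\delta_{kl}$ vanish and yields $u_{ik}u_{jl}=0$, while the second line gives $u_{ki}u_{lj}=0$ in exactly the same manner. For the summation relations, fix $\ee_{ij}=1$ (so $i\neq j$) and a column index $k$, and apply $(R'^\ee2)$ with $k=l$ after interchanging $i$ and $j$, obtaining $\sum_{m:\,\ee_{km}=0}u_{im}u_{jm}=u_{jk}u_{ik}$. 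Applying $R^\ee$ with its two column indices equal gives $u_{ik}u_{jk}=u_{jk}u_{ik}$, so the right-hand side equals $u_{ik}u_{jk}$, which is precisely the $m=k$ summand on the left; cancelling it leaves $\sum_{m\neq k,\,\ee_{km}=0}u_{im}u_{jm}=0$. The orthogonality relation $\sum_m u_{im}u_{jm}=\delta_{ij}=0$ then forces the complementary sum $\sum_{m=k\text{ or }\ee_{km}=1}u_{im}u_{jm}=0$. The two column identities are obtained the same way from the second line of $(R'^\ee2)$, the orthogonality $\sum_m u_{mi}u_{mj}=0$, and the commutation $u_{ki}u_{kj}=u_{kj}u_{ki}$ furnished by $R^\ee$ with equal row indices.

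The one genuinely delicate point is the pair of summation relations: in contrast to the $S_n^+$-setting, the individual products $u_{im}u_{jm}$ do \emph{not} vanish in $O_n^\ee$, so these are honest statements about the two complementary sums. The cancellation only materialises after one simultaneously uses $(R'^\ee2)$ to evaluate the sum over $\{m:\ee_{km}=0\}$, orthogonality to control the total sum over all $m$, and the commutation of same-column (respectively same-row) entries that $R^\ee$ provides. The intertwiner step is, by comparison, routine bookkeeping once Lemmas~\ref{LemREpsIntertwiner} and~\ref{LemIntertwinerEq} are in hand.
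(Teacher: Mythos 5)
Your proof is correct and follows essentially the same route as the paper: Lemma~\ref{LemIntertwinerEq}(b) (via Lemma~\ref{LemREpsIntertwiner}) to get $R'^\ee$ from $R^\ee$, then the specialisation $k=l$ of $(R'^\ee2)$ combined with the same-column commutation from $R^\ee$ and orthogonality to extract the summation identities. You have merely written out in full the cancellation that the paper compresses into its single displayed equation.
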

\begin{proof}
We use Lemma \ref{LemIntertwinerEq}(b) and deduce from $R'^\ee$ and $R^\ee$ for $\ee_{ij}=1$:
\[u_{ik}u_{jk}=\sum_{m\neq k:\;\ee_{km}=0}u_{im}u_{jm}+u_{ik}u_{jk}\]
Together with $\sum_m u_{im}u_{jm}=\delta_{ij}$, this proves the claim.
\end{proof}

The above relations for sums of $u_{im}u_{jm}$ seem a bit strange at first glance. However, note that such groupings of summands are nothing unusual in the theory of quantum groups. Indeed, while we have that the sum $\sum_m u_{im}u_{jm}$ is zero for $i\neq j$ in $O_n^+$, we require that all of its summands $u_{im}u_{jm}$ are  zero in $S_n^+$ and $H_n^+$. Now, the requirement in $O_n^\ee$ is something in between: certain subsums have to be zero.

\subsection{First ideas for a partition calculus for $O_n^\ee$}\label{SectPartCalc}

In Section \ref{EqRelInt}, we saw the use of intertwiner calculus for compact matrix quantum groups. For easy quantum groups \cite{BS}, this intertwiner calculus can be transferred to a partition calculus -- the intertwiners of an easy quantum group $G$ are spanned by linear maps $T_\pi$ indexed by partitions $\pi$, and we have: If $T_\pi$ and $T_\sigma$ are intertwiners of $G$, so are $T_{\pi\otimes\sigma}$, $T_{\pi\sigma}$ and $T_{\pi^*}$. See \cite{BS} or \cite{Web} for details.
We will now develop a pictorial approach to intertwiners of $O_n^\ee$ with the help of which some of the intertwiner calculus of the preceding subsection can be done by purely pictorial means.

The relations $R^\ee$ are implemented by the intertwiner $R^1_{\crosspart}+R^0_{\ididpart}$ which is somehow a superposition of the maps $T_{\crosspart}$ and $T_{\ididpart}$ -- depending on the $\ee$-entry, we either apply $T_{\crosspart}$ or  $T_{\ididpart}$. We therefore propose the following symbolism:
\begin{align*}
S_{\MixCrossId}&:= R^1_{\crosspart}+R^0_{\ididpart}\\
S_{\MixCrossPaar}&:= R^1_{\crosspart}+R^0_{\paarbaarpart}\\
S_{\MixIdPaar}&:= R^1_{\ididpart}+R^0_{\paarbaarpart}
\end{align*}
Note that $R^1_{\ididpart}+R^0_{\ididpart}=\id\otimes\id$.
With this graphical calculus, the proof of Lemma \ref{LemIntertwinerEq}(b) reads as follows, following the black lines inside the box for $\ee_{ij}=1$ and the gray ones for $\ee_{ij}=0$:

\setlength{\unitlength}{0.5cm}
\begin{center}
\begin{picture}(8,12)
\put(0,5){\MixCrossPaar}
\put(2,5.2){$=$}
\put(2,5){\partii{1}{1}{2}}
\put(2,5){\parti{3}{3}}
\put(2,5){\parti{3}{4}}
\put(2,7.5){\parti{2}{1}}
\put(2,7.5){\parti{2}{4}}
\put(3.9,5){\MixCrossId}
\put(2,7){\upparti{3}{1}}
\put(2,7){\upparti{3}{2}}
\put(2,7){\uppartii{1}{3}{4}}
\end{picture}
\end{center}

Moreover, we easily see on the graphical level (the composition of linear maps meaning that we put one picture above the other and follow the lines):
\begin{align*}
S_{\MixCrossId}S_{\MixCrossId}&=T_{\ididpart}\\
S_{\MixCrossId}S_{\MixCrossPaar}&=S_{\MixCrossPaar}S_{\MixCrossId}=S_{\MixIdPaar}
\end{align*}
But note that:
\[S_{\MixCrossPaar}S_{\MixCrossPaar}(e_i\otimes e_j)=R^1_{\ididpart}(e_i\otimes e_j)+\delta_{ij}\sum_k\left(\sum_l\delta_{\ee_{il}=0}\delta_{\ee_{lk}=0}\right)e_k\otimes e_k\]
Thus:
\[S_{\MixCrossPaar}S_{\MixCrossPaar}\neq S_{\MixIdPaar}\]
This can also be seen on the graphical level: The composition of the gray lines yields a loop. Therefore, we need to extend our pictures by keeping track of the number of gray loops, and our pictorial representation explodes -- just like the intertwiner space itself. This seems to put an end to the attempt of describing the intertwiner spaces of $O_n^\ee$ graphically.  However, depending on $\ee$, the number of loops needed in an extended definition of $S_{\MixCrossPaar}$ might be limited. Indeed, if $\ee$ is as in Example \ref{ExEps}(f), then:
\[\sum_l\delta_{\ee_{il}=0}\delta_{\ee_{lk}=0}=\delta_{\ee_{ik}=0}+\delta_{ik}+1\]
Therefore $S_{\MixCrossPaar}S_{\MixCrossPaar}$ is again in the span of the already known intertwiners, since:
\[S_{\MixCrossPaar}S_{\MixCrossPaar}=S_{\MixIdPaar}+T_{\vierpartrot}+T_{\paarbaarpart}\]

\section{Open questions}\label{OpProbl}

We want to finish this articles with a list of open questions for future work on $\ee$-mixed quantum groups.

\subsection{About $\mathring O_n^\ee$ and the choice $\ee_{ii}=0$.}

Recall that $\mathring S_n^\ee=S_n^\ee$ and $\mathring H_n^\ee=H_n^\ee$, while $\mathring O_n^\ee\neq O_n^\ee$ and $\mathring B_n^\ee\neq B_n^\ee$ in general. We therefore ask: For which matrices $\ee$ do we have $\mathring O_n^\ee\neq O_n^\ee$? Recall from Section \ref{SectIntertwinersSn} that  $\mathring O_n^\ee= O_n^\ee=O_n^+$ for $\ee=\ee_{\free}$, but $\mathring O_n^\ee=H_n\neq O_n=O_n^\ee$  for $\ee=\ee_{\comm}$. This is due to some subtlety with respect to the choice of the diagonal entries of $\ee$: The decision to put $\ee_{ii}=0$ or $\ee_{ii}=1$ does not affect the definition of $\ee$-independence, nor the relations $R^\ee$ (note that the intertwiner $R_{\crosspart}^1+R_{\ididpart}^0$ does not depend on the choice of the diagonal entries). However, the choice of the cumulants in \cite{SpW} is affected by the choice of the diagonal entries, and putting them to zero is the choice for the free cumulants rather than for the classical ones. Thus, in some cases, our choice of the diagonal entries fits better with the free situation -- as may be seen with the relations $\mathring R^\ee$ and the discussion on $\mathring O_n^\ee$. Note that the intertwiner $R_{\crosspart}^1$, which implements the relations $\mathring R^\ee$, \emph{does} depend on the choice of the diagonal entries.

We conclude that depending on the choice of the diagonal entries, we have yet another possible definition of the relations $\mathring R^\ee$, and yet another possible definition of $\mathring O_n^\ee$.

Note that in case $\mathring O_n^\ee\neq O_n^\ee$, the quantum group $\mathring O_n^\ee$ acts on the $\ee$-sphere $S^{n-1}_{\RR,\ee}$, but the action is not maximal. On which quantum space does it act maximally? We don't know. It would be interesting to analyze the spheres with the relations $x_ix_j=0$ for $\ee_{ij}=0$, either combined with $x_ix_j=x_jx_i$ for $\ee_{ij}=1$ or separately.

Furthermore, Lemma \ref{LemRZwei} allows us to define yet another variant of $O_n^\ee$ by taking the quotient of $O_n^+$ by the relations ($\mathring R^\ee2$) of Definition \ref{DefRRing}. This time, we obtain a quantum group which contains $O_n^\ee$ as a subgroup, and we might want to study this quantum group, too. See also the end of Section \ref{SectRelInt} for further possible definitions of quantum groups.

\subsection{About $S_n^\ee$}

We know from Proposition \ref{OnEpsNoncommutative}, that $O_n^\ee$ is always noncommutative. However, we have seen in Examples \ref{ExKey} and \ref{ExConverse}, that this is not always the case for $S_n^\ee$. Can we characterize the noncommutativity of $S_n^\ee$ in terms of properties of $\ee$? This question is deeply linked with the investigations of graphs having no quantum symmetry, as treated in \cite{BanBicQAG} and \cite{QAGThesis}. Concerning the link to Banica's quantum automorphism groups $S_n^{\Gamma_\ee}$ of graphs $\Gamma_\ee$, it would be interesting to find examples such that $T_n^\ee\neq S_n^\ee\neq S_n^{\Gamma_\ee}$. Moreover, do we have $S_n^\ee\neq S_n^{\ee'}$ for $\ee\neq \ee'$ in analogy to the case of $O_n^\ee$ (see Proposition \ref{OnEpsNoncommutative})? 

Also, it would be interesting to know whether or not $C(S_5^\ee)$ is commutative for $\ee$ as in Example \ref{ExEps}(f), since this matrix cannot be obtained from iterated grouping of independences. Likewise we would like to know whether $S_6^\ee$ is nontrivial for $\ee$ as in Example \ref{ExTn}(c). Here, $T_6^\ee$ is the trivial group, so the question is, whether we may find some nontrivial quantum group $S_n^\ee$ such that $T_n^\ee$ is trivial. Finally, in \cite{BanBicFour}, Banica and Bichon classify all quantum subgroups of $S_4^+$. It would be interesting to know which of them are of the type $S_n^\ee$ as studied in our article, and which are not.

\subsection{About $T_n^\ee$}

Can we read off the group $T_n^\ee\subset S_n$ of Definition \ref{DefTn} from the matrix $\ee$? Can we link it to the Coxeter group $\ZZ_2^\ee$? How does $T_n^\ee$ look like for iterated grouping of variables as in Example \ref{ExEps}? What is $T_n^\ee$ for $\ee$ as in Example \ref{ExEps}(f)?

Furthermore,  it would be interesting to see whether $T_n^\ee$ is a kind of an invariant for $O_n^\ee$ or $S_n^\ee$. Note that the statement $O_n^\ee\neq O_n^{\ee'}$ (or $S_n^\ee\neq S_n^{\ee'}$) is a rather weak one since this  only means that there is no $^*$-isomorphism between the associated $C^*$-algebras sending generators to generators. What about other isomorphisms? Can they be excluded with the help of $T_n^\ee$?

\subsection{Extensions of the de Finetti theorem}

Recapturing the proof of Theorem \ref{deFinetti}, we observe that the crucial point is Proposition \ref{CorIntertwiner}: Is $\delta_{\pi\in NC_{\mathcal C}^\ee[i]}$ an intertwiner of $G$? The input data for proving it is:
\begin{itemize}
\item[(i)] $T_\pi$ is an intertwiner of $G$, for $\pi\in\mathcal C$ a noncrossing partition.
\item[(ii)] $R_{\crosspart}^1$ is an intertwiner of $G$.
\end{itemize}
Now, for quantum groups $\mathring G_n^\ee$, item (ii) is true, by Lemma \ref{LemRRingEpsIntertwiner}. Thus, from a de Finetti point of view, $\mathring O_n^\ee$ is more natural than $O_n^\ee$. On the other hand, $O_n^\ee$ is more natural from a quantum action point of view (Theorem \ref{Action}). Hence, it would be interesting to find de Finetti theorems also for $R^\ee$, in particular for $O_n^\ee$ and $B_n^\ee$.

\subsection{A partition calculus for $\ee$-quantum groups}

As sketched in Section \ref{SectPartCalc}, a general partition calculus for $O_n^\ee$ seems hopeless, since the graphical calculus seems to explode. This fits (in the easy quantum group philosophy) with the fact that the quantum group containing the maximal intertwiner space, namely $T_n^\ee$, can be trivial (which means that its intertwiner spaces consists of all possible linear maps). however, it would be interesting to study the intertwiner spaces in special examples of $\ee$. Note that for $\ee=\ee_{\comm}$ and $\ee=\ee_{\free}$ we do have such a partition calculus, since $S_{\MixCrossPaar}=T_{\crosspart}$ in the former case and $S_{\MixCrossPaar}=T_{\ididpart}$ in the latter. Moreover, Example \ref{ExEps}(f) seems to be promising for providing an example with an accessible intertwiner space structure.

However, we need to define pictures also for more general intertwiners. The idea is to take any partition $\pi\in P(k,l)$ and to replace each crossing by one of the basic boxes such as:
\[\MixCrossPaar\;,\MixCrossId\;,\MixIdPaar\;,\textnormal{loops}\ldots\]
In order to have a partition apporach to the intertwiners of quantum subgroups of $O_n^\ee$, for instance for $S_n^\ee$, the situation is even more involved: We need to come up with symbols for partitions with blocks of arbitrary sizes -- and it is not even clear how to define crossings for such partitions.

In any case, any progress with respect to the decription of the intertwiner spaces of $O_n^\ee$ or $S_n^\ee$ will certainly extend our understanding of quantum subgroups of $O_n^+$.

\section*{Acknowledgements}

We thank Guillaume C\'ebron for discussions on $\ee$-independence.

\bibliographystyle{alpha}
\nocite{*}
\bibliography{BibFileMixedQG}

\end{document}